\documentclass{article}
\usepackage[english]{babel}
\usepackage[T1]{fontenc}
\usepackage[utf8]{inputenc}
\usepackage{amsthm}
\usepackage{amsmath}
\usepackage{amssymb}
\usepackage{palatino}
\usepackage{graphicx}
\usepackage{dsfont}
\usepackage[all]{xy}
\usepackage[top=2.5cm,bottom=2.5cm,left=2.5cm,right=2.5cm]{geometry}
\usepackage{enumitem}
\usepackage{mathtools}
\usepackage{subcaption}
\usepackage{wrapfig}

\setlength{\parskip}{\medskipamount}
\setlength{\parindent}{0pt}

\newcommand{\R}{\mathds{R}}

\newcommand{\C}{\mathcal{C}}

\theoremstyle{plain}
\newtheorem{defi}{Definition}
\newtheorem{thm}{Theorem}
\newtheorem{lem}{Lemma}

\newtheorem{rmk}{Remark}

\renewenvironment{proof}{{\bfseries Proof.}}{\qed}

\title{\textbf{Dynamics of neural networks with elapsed time model and learning processes}}
\author{Delphine Salort\footnote{Sorbonne Université, Laboratoire de Biologie Computationnelle et Quantitative.} , Nicolas Torres\footnote{Sorbonne Université, Laboratoire Jacques-Louis Lions.}}
\date{}

\begin{document}
\maketitle
\begin{abstract}
We introduce and study a new model of interacting neural networks, incorporating  the spatial dimension (e.g. position of neurons across the cortex) and some learning processes. The dynamic of each neural network is described via the elapsed time model, that is, the neurons are described by the elapsed time since their last discharge and the chosen learning processes are essentially inspired from the Hebbian rule. We then obtain a system of integro-differential equations, from which we analyze the convergence to stationary states by the means of entropy method and Doeblin’s theory in the case of weak interconnections. We also consider the situation where neural activity is faster than the learning process and give conditions where one can approximate the dynamics by a solution with a similar profile of a steady state. For stronger interconnections, we present some numerical simulations to observe how the parameters of the system can give different behaviors and pattern formations. 
\end{abstract}

\textbf{Keywords:} Mathematical Biology, Neural network, Elapsed time, Renewal equation, Learning rule, Connectivity kernel, Weak interconnections, Convergence to equilibrium, Entropy method, Doeblin Theory.

\textbf{Mathematics Subject Classification (2010):} 35B40, 35F20, 35R09, 92B20.
\section{Introduction}
The study and modeling of neural networks have been expanded significantly in the past years and still lead to several stimulating open problems. In the case of homogeneous networks, evolution equations describing neural assemblies derived from stochastic processes and microscopic models have become a very active area. Among them, the elapsed time model, has known a growth interest and has been studied by several authors such as Cañizo et al. in \cite{canizo2019asymptotic},Chevalier et al. in \cite{chevallier2015microscopic}, Ly et al. in \cite{ly2009spike}, Mischler et al. in \cite{mischler2018} and Pakdaman et al. in \cite{PPD,PPD2,PPD3}. In particular, the work of Chevalier et al. in \cite{chevallier2015microscopic} establishes a bridge between Poisson point processes that model spike trains and the time elapsed model.

However, the incorporation of spatial dimension, using  those homogeneous models for each unit has not been investigated much yet. Recent works of J. Crevat et al. in \cite{crevat2019diffusive,crevat2019mean,crevat2019rigorous} consider the case with spatial dimension, where each neuron is described via a kinetic PDE derived from FitzHugh-Nagumo model. Else, the main models used for the incorporation of space variable via integro-differential equations are inspired from the Wilson-Cowan \cite{wilson1972excitatory} and Amari \cite{amari1977dynamics} models, where several theoretical and numerical results has been obtained, see Faye et al. in \cite{faye2013existence,faye2010some,faye2013localized}.

Here, we consider the evolution of interacting neural networks, where each neural network is governed by the time elapsed model and has a position $x \in \Omega$, where $\Omega$ is a bounded domain of $\R^d$ (with $d$ the dimension), which models the cortex. Neurons undergo some charging process and then a sudden discharge takes place in response to certain stimulus and this causes other neighboring neurons to discharge, depending on the strength of interconnections in the network. The time variations of these interconnections determine the learning process of the neural network. For simplicity we assume that for each position $x$ we have a homogeneous network that is considered as a single neuron.

Let $n=n(t,s,x)$ be the probability density of finding a neuron at time $t$, such that the elapsed time since its last discharge is $s\ge0$ and its position is $x\in\Omega$. We model the neural network through the following nonlinear renewal system
\begin{equation}
\label{eq0}
\left\{
\begin{matrix*}[l]
\partial_t n(t,s,x)+\partial_s n(t,s,x)+p(s,S(t,x))n(t,s,x)=0&t>0,s>0,x\in\Omega\vspace{0.15cm},\\
N(t,x)\coloneqq n(t,s=0,x)=\int_0^\infty p(s,S(t,x))n(t,s,x)\,ds&t>0,x\in\Omega\vspace{0.15cm},\\
S(t,x)=\int_\Omega w(t,x,y)N(t,y)dy+I(t,x)&t>0,x\in\Omega\vspace{0.15cm},\\
\partial_t w(t,x,y)= -w(t,x,y) + \gamma G(N(t,x),N(t,y))&t>0,\,x,y\in\Omega\vspace{0.15cm},\\
n(t=0,s,x)=n_0(s,x)\ge0,\:w(t=0,x,y)=w_0(x,y)\ge0&s\ge0,\,x,y\in\Omega.
\end{matrix*}
\right.
\end{equation}
The equation for $n$ and the integral boundary condition correspond to the renewal equation, where the function $p\colon[0,\infty)\times\R\to\R$ represents the firing rate of neurons. This function $p$ depends on the elapsed time $s$ and $S(t,x)$, which is the amplitude of stimulation received by the network at time $t$ and position $x$, and we denote $I(t,x)$ an external input.
We say that the system is inhibitory (resp. ) if $p$ is decreasing (resp. increasing) with respect to $S$. 

For the firing rate $p$, we deal with the two following cases.
\begin{subequations}
	\begin{equation}
	\label{lbp1}
 	p_*\le p\le p_\infty,\:\textrm{for some constants}\:p_*,p_\infty>0.
	\end{equation}
	\begin{equation}
	\label{lbp2}
	p_{*}\mathds{1}_{\{s>s_*\}}\le p\le p_{\infty},\:\textrm{for some constants} \:p_*,p_\infty,s_*>0.
	\end{equation}
\end{subequations}
The hypothesis \eqref{lbp2} is an extension of \eqref{lbp1}, since it allows $p$ to vanish for values of $s$ lying on some interval. We mainly deal with the case \eqref{lbp2} in subsection \ref{doeblin}. A special example is to consider
\begin{equation}
\label{pjump}
    p=p_\infty\mathds{1}_{\{s>\sigma(S)\}}
\end{equation}
where $p_\infty>0$ is a constant and $\sigma\colon[0,\infty)\to[0,\infty)$ is a bounded and Lipschitz function. This means that neurons fire if the elapsed time attains the value $\sigma(S)$. In this article we mostly deal with the case when $p$ is smooth, but the results are also valid for functions as in example \eqref{pjump}.

The function $N(t,x)$ is the activity of a neuron at time $t$ and position $x$. This corresponds to integrate with respect to $s$ the term with firing rate in the first equation of \eqref{eq0}. The integral boundary condition of $n$ at $s=0$, states that the elapsed time is reset to zero after a discharge.

The function $w\in\C_b([0,\infty)\times\Omega\times\Omega)$ is the connectivity kernel, which depends on the location of neurons. The third equality of \eqref{eq0} establishes that the amplitude of stimulation received by the network is the result of connectivity among discharging neurons plus the external input $I\in\C_b([0,\infty)\times\Omega)$.

\begin{figure}[ht!]
	\centering
	\includegraphics[width=0.36\linewidth]{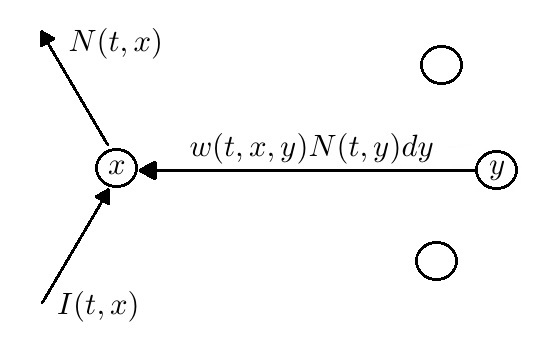}
	\caption{\small A neuron located at position $x$ discharges and sends $N(t,x)$ to rest of the network. At the same time this neuron in $x$ receives $I(t,x)$ from the external input and $w(t,x,y)N(t,y)dy$ from a discharging neuron located at $y$.}
\end{figure}

Furthermore this kernel evolves in time following a learning rule that depends on the smooth function $G\colon\R^2\to\R$ and the activity $N$ at locations $x,y$. Without loss of generality, we assume for simplicity in computations throughout this article that $G$ that satisfies
\begin{equation}
\label{Gnormal}
\|G\|_\infty+\|\nabla G\|_\infty\le1.
\end{equation}

The impact of the learning is studied in the fourth equation of \eqref{eq0}, where $\gamma>0$ is called the connectivity parameter. If $\gamma$ and $\|\tfrac{\partial p}{\partial S}\|_\infty$ are small, we say that the system \eqref{eq0} is under a weak interconnection regime. 

As an example of a learning rule we have 
$$G(N(t,x),N(t,y))=N(t,x)N(t,y),$$
inspired from the Hebbian learning which has been introduced by Hebb in his seminal work in \cite{hebborganization}. This means that if two neurons have simultaneously high activity their connection becomes stronger. Mathematical formulations of this rule have been studied for example by Gerstner and Kistler in \cite{gerstner2002spiking}.

Another example is to take $$G(N(t,x),N(t,y))=\phi(N(t,x)N(t,y))\exp\left(-(N(t,x)-N(t,y))^2\right),$$ with $\phi$ a sigmoid function. This is inspired  from the works of Abbassian et al. in \cite{abbassian2012neural} and Amari in \cite{amari1977dynamics} on neural fields and membrane potentials. This means that the interconnection of two neurons becomes stronger if their activities are similar and large enough.

Other learning models have been studied in neural networks. In the work of Perthame et al. in \cite{perthame2017distributed}, they studied the learning process for the leaky integrate-and-fire model (for references about this model, see \cite{caceres2011analysis,carrillo2015qualitative}). They indirectly generalize the Hebbian learning via distributed synaptic weights, which means that there is a total activity distributed throughout the network according to some parameter. In contrast, we present a learning model for the time elapsed dynamics that can generalize directly the Hebbian model via evolution of the connectivity kernel.

Finally, $(n_0,w_0)$ denotes the initial configuration of the system with
\begin{equation}
\label{inidata}
n_0\in\C_b(\Omega,L^1_s),\quad w_0\in\C_b(\Omega\times\Omega).
\end{equation} 
Observe that for each $x\in\Omega$ the $L^1$-norm of $n(t,\cdot,x)$ is formally preserved, i.e. there exists $g\in\C_b(\Omega)$ non-negative such that
\begin{equation}
\label{gx}
g(x)\coloneqq\int_0^\infty\hspace{-0.2cm}n_0(s,x)\,ds=\int_0^\infty \hspace{-0.2cm}n(t,s,x)\,ds\\ \ge0\quad\forall t>0,\,x\in\Omega,\quad \int_\Omega g(x)\,dx=1.
\end{equation}

The rest of the article is organized as follows. In section \ref{wellp} we prove that system \eqref{eq0} is well-posed in a suitable space when the interconnections are weak. Under the same regime of connectivity, we prove in section \ref{stationary} the existence of stationary states and in section \ref{conv} we prove the exponential convergence to equilibrium in two different ways: via the entropy method and via Doeblin's theory. Furthermore in section \ref{slowlearning} we study a variant of system \eqref{eq0} where the time scale for learning is much slower than that of elapsed time dynamics. Finally in section \ref{numerical} we present some examples of numerical simulations for different external inputs, connectivity parameters and learning rules.

\section{Well-posedness for the weak interconnection case}
\label{wellp}
We prove that system \eqref{eq0} is well-posed under the weak interconnection regime. In order to do so, we start by studying an auxiliary linear problem where the amplitude of stimulation is fixed and then we proceed to prove well-posedness of system \eqref{eq0} via a contraction argument.
\subsection{The linear problem}
Given $S\in\C_b([0,\infty)\times\Omega)$, we consider the following linear problem
\begin{equation}
\label{eql}
\left\{
\begin{matrix*}[l]
\partial_t n+\partial_s n+p(s,S(t,x))n=0& t>0,s>0,x\in\Omega\vspace{0.15cm},\\
N(t,x)\coloneqq n(t,s=0,x)=\int_0^\infty p(s,S(t,x))n\,ds& t>0,x\in\Omega\vspace{0.15cm},\\
n(t=0,s,x)=n_0(s,x)\ge0& s\ge0,x\in\Omega.
\end{matrix*}
\right.
\end{equation}
We look for weak solutions satisfying $n\in\C_b([0,\infty)\times\Omega,L^1_s)$, so that $N\in\C_b([0,\infty)\times\Omega)$. Furthermore, in this linear system the variable $x$ is simply a parameter, since there is no derivative or integral term involving the position. 
\begin{lem}
\label{linear}
Assume that $n_0\in\C_b(\Omega,L^1_s)$ and $p\in W^{1,\infty}((0,\infty)\times\R)$ satisfies \eqref{lbp2}. Then for a given $S\in\C_b([0,\infty)\times\Omega)$, the equation \eqref{eql} has a unique weak solution $n\in\C_b([0,\infty)\times\Omega,L^1_s)$ with $N\in\C_b([0,\infty)\times\Omega)$. Moreover $n$ is non-negative and verifies the property \eqref{gx}.
\end{lem}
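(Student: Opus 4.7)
The plan is to exploit that $x$ enters \eqref{eql} only as a parameter: for each fixed $x$, the system is a one-dimensional renewal equation in $(t,s)$. I would solve it by characteristics and reduce the integral boundary condition at $s=0$ to a linear Volterra equation for the activity $N$. Introducing the survival factor
\[
\Phi(a,s,x) \coloneqq \exp\!\left(-\int_0^s p\bigl(\sigma, S(a+\sigma,x)\bigr)\, d\sigma\right),
\]
integration along the characteristics $t-s = \mathrm{const}$ gives $n(t,s,x) = N(t-s,x)\,\Phi(t-s,s,x)$ when $s < t$, and $n(t,s,x) = n_0(s-t,x)\,\exp\bigl(-\int_0^t p(s-t+\tau, S(\tau,x))\,d\tau\bigr)$ when $s \ge t$. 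Plugging these expressions into $N(t,x) = \int_0^\infty p(s,S(t,x))\, n(t,s,x)\, ds$ yields the linear Volterra equation
\[
N(t,x) = \int_0^t p(s,S(t,x))\, \Phi(t-s,s,x)\, N(t-s,x)\, ds + h(t,x),
\]
whose source $h$ is bounded by $p_\infty\, \sup_{x\in\Omega}\|n_0(\cdot,x)\|_{L^1_s}$, finite by \eqref{inidata}.

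I would then solve this Volterra equation via Banach's fixed point theorem in $\C_b([0,T]\times\Omega)$: since $p \le p_\infty$, the integral operator has norm at most $p_\infty T$, so for $T < 1/p_\infty$ it is a contraction and produces a unique continuous fixed point $N$. Linearity and the uniform choice of $T$ let me iterate on $[kT,(k+1)T]$ and extend $N$ to $[0,\infty)$. Reconstructing $n$ from the characteristic formulas gives $n \in \C_b([0,\infty)\times\Omega,L^1_s)$: continuity in $(t,x)$ as an $L^1_s$-valued map follows from dominated convergence applied separately to the two pieces $s<t$ and $s\ge t$, using continuity of $n_0$, $S$ and $p$ in their arguments. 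Non-negativity is immediate from the characteristic formulas because $n_0 \ge 0$ and all survival factors are positive; alternatively one sees it from Picard iteration starting at $N_0 = 0$ with a non-negative kernel. Mass conservation \eqref{gx} comes from integrating the $n$-equation in $s$ over $(0,\infty)$: the influx $-n(t,0,x) = -N(t,x)$ cancels exactly the integrated loss $-\int_0^\infty p n\, ds = -N(t,x)$, so $\partial_t \int_0^\infty n(t,s,x)\, ds = 0$ for each $x$.

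The main technical obstacle is the joint $(t,x)$-continuity of the fixed point $N$: this is where $n_0 \in \C_b(\Omega, L^1_s)$ and $S \in \C_b([0,\infty)\times\Omega)$ are crucial, as they ensure that both the source $h$ and the kernel of the Volterra equation belong to $\C_b$. Once this is secured, the Picard iterates live in $\C_b([0,T]\times\Omega)$ and converge uniformly, so $N$ inherits continuity; the characteristic representation then transfers this continuity to $n$ in the $L^1_s$ sense. Only the upper bound $p \le p_\infty$ and the Lipschitz regularity $p \in W^{1,\infty}$ are used here; the lower bound in \eqref{lbp2} plays no role in well-posedness and is reserved for the long-time analysis.
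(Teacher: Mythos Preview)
Your proof is correct and follows the same strategy as the paper---characteristics plus a contraction on a short time interval $T<1/p_\infty$, then iteration---the only difference being that you run the fixed point on the scalar activity $N$ (the Volterra-equation viewpoint) while the paper runs it directly on $n$ in $\C_b([0,T]\times\Omega,L^1_s)$ via the map $\Psi[n]$ built from the same characteristic formulas. Both versions yield the same Lipschitz constant $p_\infty T$, and your remarks on non-negativity, mass conservation, and the irrelevance of the lower bound in \eqref{lbp2} for well-posedness match the paper's treatment.
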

In particular this lemma proves the property \eqref{gx} for the non linear system \eqref{eq0}. Moreover, this lemma is also valid for $p$ defined in \eqref{pjump} with a similar proof.

\begin{proof}
	We start by noticing that a solution of the linear system \eqref{eql} satisfies the following fixed point equation
	\begin{equation}
	\label{fixnl}
	\begin{split}
	n(t,s,x)=\Psi[n](t,s,x) &\coloneqq n_0(s-t,x)\exp\left(-\int_0^t p(\tau+s-t,S(\tau,x))\,d\tau\right)\mathds{1}_{\{ s>t\}}\\
	&\quad +N(t-s,x)\exp\left(-\int_0^s p(\tau,S(t-s+\tau,x))\,d\tau\right)\mathds{1}_{\{0<s<t\}},
	\end{split}
	\end{equation}
	with $N(t,x)=\int_0^\infty p(u,S(t,x))n(t,u,x)\,du$, which depends on $n$.
	
	Let $T>0$ and $X_T\coloneqq\{ n\in\C_b([0,T]\times\Omega,L^1_s)\colon n(0)=n_0\}$, it readily follows that $\Psi$ maps $X_T\to X_T$. We prove by the contraction principle that $\Psi$ has a unique fixed point in $X_T$ for $T>0$ small enough, i.e. there exists a unique weak solution of \eqref{eql} defined on $[0,T]$. Consider $n_1,n_2\in X_T$ so we have
	\begin{equation}
	\begin{split}
	\int_0^\infty|\Psi[n_1]-\Psi[n_2]|(t,s,x)\,ds&\le\int_0^t|N_1(t-s,x)-N_2(t-s,x)|\,ds\\
	&\le T\sup_{(t,x)\in[0,T]\times\Omega}|N_1-N_2|(t,x)\\
	&\le T\,p_{\infty}\sup_{(t,x)\in[0,T]\times\Omega}\|n_1(t,x)-n_2(t,x)\|_{L^1_s},
	\end{split}
	\end{equation}
	thus for $T<\frac{1}{p_{\infty}}$, we have proved that $\Psi$ is a contraction and there exists a unique $n\in X_{T}$ such that $\Psi[n]=n$. Since the choice of $T$ is independent of $n_0$, we can reiterate this argument to get a unique solution of \eqref{eql}, which is defined for all $t\ge0$.
	
	Next we prove the mass conservation property. Since $n$ satisfies the fixed point equation in \eqref{fixnl}, it also verifies the following equality
	\begin{equation}
	\label{soln}
	n(t,s,x)=n_0(s-t,x)\mathds{1}_{\{ s>t\}}-\int_0^t p(s-t+\tau,S(\tau,x))n(\tau,s-t+\tau,x)\mathds{1}_{\{ s>t-\tau\}}\,d\tau+N(t-s,x)\mathds{1}_{\{0<s<t\}},
	\end{equation}
	hence we get the property by integrating with respect to $s$ on $(0,\infty)$.
	
	Finally, since $n_0$ is non-negative then $\Psi$ preserves positivity, so by uniqueness of fixed point the corresponding solution $n$ must be non-negative.	
\end{proof}

\subsection{The non-linear problem}
We are now ready to prove that system \eqref{eq0} is well-posed in the case of weak interconnection.
\begin{thm}[Well-posedness for weak interconnections]
	Assume \eqref{inidata}-\eqref{gx} and that $p\in W^{1,\infty}((0,\infty)\times\R)$ satisfies \eqref{lbp2}. Then for 
	$$\|g\|_\infty|\Omega|\,\|\tfrac{\partial p}{\partial S}\|_\infty\max\left\{\|w_0\|_\infty,\gamma\right\}<1,$$ 
	the system \eqref{eq0} has a unique solution with $n\in\C_b([0,\infty)\times\Omega,L^1_{s}),\,N\in\C_b([0,\infty)\times\Omega),\,S\in\C_b([0,\infty)\times\Omega)$ and $w\in\C_b([0,\infty)\times\Omega\times\Omega)$. Moreover, $n$ is non-negative for all $t>0$.
\end{thm}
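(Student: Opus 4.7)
The strategy is a contraction argument on the stimulation $S$. Given $S\in\C_b([0,\infty)\times\Omega)$, Lemma \ref{linear} produces a unique non-negative $n[S]$ of $L^1_s$-mass $g(x)$, hence an activity $N[S](t,x)=\int_0^\infty p(s,S(t,x))\,n[S](t,s,x)\,ds$ with $\|N[S]\|_\infty\le p_\infty\|g\|_\infty$. The learning equation, being a linear ODE in $t$ parameterized by $(x,y)$, admits the explicit Duhamel form
$$w[S](t,x,y)=w_0(x,y)e^{-t}+\gamma\int_0^t e^{-(t-\tau)}G\bigl(N[S](\tau,x),N[S](\tau,y)\bigr)\,d\tau,$$
which together with $\|G\|_\infty\le 1$ yields the uniform bound $\|w[S]\|_\infty\le W\coloneqq\max\{\|w_0\|_\infty,\gamma\}$. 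I would then define
$$\Phi(S)(t,x)\coloneqq\int_\Omega w[S](t,x,y)\,N[S](t,y)\,dy+I(t,x),$$
so that fixed points of $\Phi$ correspond exactly to solutions of \eqref{eq0}.

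To prove $\Phi$ is a contraction on $\C_b([0,T]\times\Omega)$ for a suitable $T>0$, I would split
$$\Phi(S_1)-\Phi(S_2)=\int_\Omega w[S_1](N[S_1]-N[S_2])\,dy+\int_\Omega(w[S_1]-w[S_2])\,N[S_2]\,dy,$$
and treat the first piece via the pointwise identity $N_1-N_2=\int(p(s,S_1)-p(s,S_2))n_1\,ds+\int p(s,S_2)(n_1-n_2)\,ds$: the leading contribution $\|\tfrac{\partial p}{\partial S}\|_\infty g(y)|S_1-S_2|(t,y)$, integrated in $y$ against $w[S_1]$ (bounded by $W$), yields exactly $W\|g\|_\infty|\Omega|\,\|\tfrac{\partial p}{\partial S}\|_\infty\|S_1-S_2\|_{T,\infty}$; the remainder, involving $n_1-n_2$, is controlled through the fixed-point representation \eqref{fixnl} of Lemma \ref{linear} and is of order $T$. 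The second piece is handled through the Duhamel formula and $\|\nabla G\|_\infty\le 1$, which give $\|w[S_1]-w[S_2]\|_\infty=O(T)\sup_{\tau\le T}\|N[S_1]-N[S_2]\|_\infty$, again of order $T$.

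Under the hypothesis $W\|g\|_\infty|\Omega|\,\|\tfrac{\partial p}{\partial S}\|_\infty<1$, the leading constant is strictly below one, and choosing $T>0$ small enough (depending only on $p_\infty$, $W$, $\|g\|_\infty$, $\|\tfrac{\partial p}{\partial S}\|_\infty$ and $\|\nabla G\|_\infty$, not on initial data) makes the total contraction constant $<1$. Banach's theorem then yields a unique $S$ on $[0,T]$, hence unique $n,N,w$. Since the uniform bounds $\|n(t,\cdot,x)\|_{L^1_s}=g(x)$ and $\|w\|_\infty\le W$ are preserved at time $T$, one iterates with the same step size to extend globally to $[0,\infty)$; non-negativity of $n$ is inherited from Lemma \ref{linear}. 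The main obstacle is arranging the estimates so that the leading contraction constant is exactly $W\|g\|_\infty|\Omega|\,\|\tfrac{\partial p}{\partial S}\|_\infty$ with no spurious $p_\infty$ factor: this forces one to isolate the direct pointwise Lipschitz dependence of $N$ on $S$ through $\tfrac{\partial p}{\partial S}$ and the mass $g$, while placing all $p_\infty$-weighted secondary contributions (from the dependence of $n$ on $S$, and from $\|N\|_\infty$ in the second piece) into the $O(T)$ remainder absorbed by the time-smallness.
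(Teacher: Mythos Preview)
Your proposal is correct and follows essentially the same route as the paper: a Banach fixed-point argument on $S$ via the operator $\Phi=\mathcal{T}$, with $n$ supplied by Lemma~\ref{linear}, $w$ by the Duhamel formula, the same splitting of $N_1-N_2$ and the same $O(T)$ control of the secondary terms through \eqref{fixnl} and \eqref{solw}. Your identification of the leading contraction constant $W\|g\|_\infty|\Omega|\,\|\tfrac{\partial p}{\partial S}\|_\infty$ and the iteration step exactly mirror the paper's computation of $C$ and its limit as $T\to0$.
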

\begin{proof}
	Consider $T>0$. We fix a function $S\in\C_b([0,\infty)\times\Omega)$ and define the functions $n\in\C_b([0,\infty)\times\Omega,L^1_s),\,N\in\C_b([0,\infty)\times\Omega)$ which are solutions of \eqref{eql} by lemma \ref{linear}. Furthermore, the solution of this linear system preserves positivity and the condition \eqref{gx}.
	
		The solution  $w\in\C_b([0,\infty)\times\Omega\times\Omega)$ is obtained through the formula
	\begin{equation}
	\label{solw}
	w(t,x,y)=e^{-t}w_0(x,y)+\gamma\int_0^te^{-(t-\tau)}G(N(\tau,x),N(\tau,y))\,d\tau.
	\end{equation}
	So we have a solution of system \eqref{eq0} defined on $[0,T]$ if $S$ satisfies for all $0\le t\le T$ and $x\in\Omega$, the following fixed point condition
	\begin{equation}
	\label{fixS}
		S(t,x)=\mathcal{T}[S](t,x)\coloneqq \int w(t,x,y)\left(\int_0^\infty p(s,S(t,y))n(t,s,y)\,ds\right)dy+I(t,x).
	\end{equation}
	We prove that $\mathcal{T}$ defines for all $T>0$ an operator that maps $X_T\to X_T$ with $X_T\coloneqq\C_b([0,T]\times\Omega)$. First, we observe the following estimate for the activity
	\begin{equation}
	\label{bdN}
		|N(t,x)|\le p_\infty\|g\|_\infty,\quad\forall(t,x)\in [0,T]\times\Omega.	
	\end{equation}
	And from the equation of $w$, we get the following uniform
	\begin{equation}
	\label{bdw}
		|w(t,x,y)|\le \max\{\|w_0\|_\infty,\gamma\},\quad\forall(t,x,y)\in [0,T]\times\Omega\times\Omega.
	\end{equation}
	Let $A\coloneqq\max\{\|w_0\|_\infty,\gamma\}$. This implies that for any $S\in X_T$ we have
	$$\|\mathcal{T}[S]\|_\infty\le A p_{\infty}+\|I\|_\infty,$$
	and it is immediate that $\mathcal{T}[S]$ is a continuous function, thus $\mathcal{T}[S]\in X_T$. 
	
		We now prove that for $T$ small enough, $\mathcal{T}$ is a contraction. Consider $S_1,\,S_2\in X_T$ and observe that the difference between $w_1$ and $w_2$ satisfies, by using \eqref{solw},
	\begin{equation}
	\label{diffw}
		|w_1(t,x,y)-w_2(t,x,y)|\le2\gamma T\|N_1-N_2\|_\infty.
	\end{equation}	
	Next, for the difference between $N_1$ and $N_2$ we have
	\begin{equation}
	\label{diffN}
		\begin{split}
		|N_1-N_2|(t,x) &\le\int_0^\infty|p(s,S_1)\,n_1-p(s,S_2)\,n_2|\,ds\\
		&\le \int_0^\infty|p(s,S_1)-p(s,S_2)|\,n_1
		\,ds+\int_0^\infty p(s,S_2)|n_1-n_2|
		\,ds\\
		&\le \|g\|_\infty\,\|\tfrac{\partial p}{\partial S}\|_\infty\,\|S_1-S_2\|_\infty+p_{\infty}\|n_1-n_2\|_{L^\infty_{t,x}L^1_s}.
		\end{split}
	\end{equation}
	Now we have to estimate the difference between $n_1$ and $n_2$. From \eqref{soln} and estimate \eqref{diffN}, we get
	\begin{equation*}
	\|n_1-n_2\|_{L^\infty_{t,x}L^1_s}\le 2T\|g\|_\infty\,\|\tfrac{\partial p}{\partial S}\|_\infty\,\|S_1-S_2\|_\infty+2Tp_{\infty}\|n_1-n_2\|_{L^\infty_{t,x}L^1_s}.
	\end{equation*}
	Then, for $T<\tfrac{1}{2p_{\infty}}$ we obtain
	\begin{equation}
	\|n_1-n_2\|_{L^\infty_{t,x}L^1_s}\le\frac{2T\|g\|_\infty\|\tfrac{\partial p}{\partial S}\|_\infty}{1-2Tp_{\infty}}\|S_1-S_2\|_{\infty}.
	\end{equation}
	Finally by combining the estimates \eqref{bdN}-\eqref{diffw}, the operator $\mathcal{T}$ satisfies
	\begin{equation}
	\begin{split}
	|\mathcal{T}[S_1]-\mathcal{T}[S_2]|&\le\int|w_1-w_2|\,N_1\,dy+\int|w_2|\,|N_1-N_2|\,dy\\
	&\le \big( 2\gamma T p_{\infty}+|\Omega|A\big)\|N_1-N_2\|_{\infty}\\
	&\le C\,\|S_1-S_2\|_{\infty},
	\end{split}
	\end{equation}
	with $C>0$ given by
	$$C\coloneqq\|g\|_\infty\,\|\tfrac{\partial p}{\partial S}\|_\infty\left(2\gamma T p_{\infty}+|\Omega|A\right)\left(1+\frac{2Tp_\infty}{1-2Tp_{\infty}}\right).$$
	Hence for $\|g\|_\infty|\Omega|\,\|\tfrac{\partial p}{\partial S}\|_\infty A<1$ and $T$ small enough we get $C<1$, so $\mathcal{T}$ is a contraction.

	From Picard's fixed point we get a unique $S\in X_T$ such that $\mathcal{T}[S]=S$, and this implies the existence of a unique solution of \eqref{eq0} defined on $[0,T]$.
	Since estimates \eqref{bdN} and \eqref{bdw} are uniform in $T$, we can iterate this argument to get a unique solution of \eqref{eq0} defined for all $t>0$.
	
	Furthermore, we conclude from this construction that the non-linear system \eqref{eq0} preserves positivity and satifisfies \eqref{gx} like the linear system \eqref{eql}.
\end{proof}

\begin{rmk}
    From estimate \eqref{bdN}, we observe that we only need the function $G$ to be bounded on the set $[0,p_\infty\|g\|_\infty]^2$. This justifies that we do not lose generality in assuming $G$ normalized according to \eqref{Gnormal}.
\end{rmk}

The condition on $p$ can be relaxed to wider class of functions, as we see in the following example.
\begin{thm}
	Consider $p$ defined in \eqref{pjump}. Assume in addition that $n_0\in L^\infty_{s,x}$ and $w_0\in C_b(\Omega\times\Omega)$, then the same result holds if $$p_\infty\|\sigma'\|_\infty|\Omega|\max\left\{\|w_0\|_\infty,\gamma\right\}(\|n_0\|_\infty+p_{\infty}\|g\|_\infty)<1.$$
\end{thm}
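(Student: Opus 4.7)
The plan is to adapt the contraction argument of the previous theorem, replacing the Lipschitz estimate on $p$ with a measure-theoretic bound that exploits the structure of \eqref{pjump}. The key new ingredient will be an $L^\infty_{s,x}$ bound on $n$, which is why the extra hypothesis $n_0 \in L^\infty_{s,x}$ is imposed.

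First I would revisit the linear problem of Lemma~\ref{linear} with the discontinuous firing rate \eqref{pjump}. The fixed-point representation \eqref{fixnl} is a convex combination of $n_0$ and $N$ weighted by exponentials bounded by $1$, so it yields the pointwise bound
$$\|n(t,\cdot,\cdot)\|_{L^\infty_{s,x}} \le \max\bigl(\|n_0\|_{L^\infty_{s,x}},\ \sup_{\tau \in [0,t]}\|N(\tau,\cdot)\|_\infty\bigr) \le \|n_0\|_\infty + p_\infty \|g\|_\infty,$$
using \eqref{bdN}. This bound is preserved uniformly in $T$, and the existence proof in Lemma~\ref{linear} goes through verbatim since it only used the upper bound $p \le p_\infty$.

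The crucial estimate for the nonlinear contraction replaces the use of $\|\partial p/\partial S\|_\infty$. Since $p(s,S_1)-p(s,S_2) = p_\infty\bigl(\mathds{1}_{\{s>\sigma(S_1)\}} - \mathds{1}_{\{s>\sigma(S_2)\}}\bigr)$ is supported on an interval of $s$-length at most $|\sigma(S_1)-\sigma(S_2)| \le \|\sigma'\|_\infty |S_1-S_2|$, we obtain
$$\int_0^\infty |p(s,S_1)-p(s,S_2)|\, n_1(t,s,x)\,ds \le p_\infty (\|n_0\|_\infty + p_\infty \|g\|_\infty)\|\sigma'\|_\infty |S_1-S_2|(t,x).$$
Substituting this in place of the first term of \eqref{diffN} gives
$$\|N_1-N_2\|_\infty \le p_\infty (\|n_0\|_\infty + p_\infty \|g\|_\infty)\|\sigma'\|_\infty \|S_1-S_2\|_\infty + p_\infty \|n_1-n_2\|_{L^\infty_{t,x}L^1_s},$$
and the corresponding estimate \eqref{soln} for $\|n_1-n_2\|_{L^\infty_{t,x}L^1_s}$ is derived in the same way, picking up an extra factor of $2T$.

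The remainder of the argument parallels the previous proof: combining these estimates with \eqref{bdw} and \eqref{diffw} we bound $|\mathcal{T}[S_1]-\mathcal{T}[S_2]|$ by $C\|S_1-S_2\|_\infty$ with
$$C = p_\infty\|\sigma'\|_\infty(\|n_0\|_\infty+p_\infty\|g\|_\infty)\bigl(2\gamma T p_\infty + |\Omega| A\bigr)\Bigl(1+\tfrac{2Tp_\infty}{1-2Tp_\infty}\Bigr),$$
where $A = \max\{\|w_0\|_\infty,\gamma\}$. Under the smallness hypothesis of the statement, taking $T$ small enough makes $C<1$, Picard's theorem yields a unique local solution, and the uniform bounds \eqref{bdN}, \eqref{bdw} together with the preserved $L^\infty_{s,x}$ bound on $n$ allow iteration to obtain a global solution. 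The main obstacle is conceptual rather than computational: because $p$ is discontinuous in $S$ one must transfer the Lipschitz regularity from $p$ to $\sigma$ and compensate by an $L^\infty$ control on $n$, and one has to verify that this control is propagated both by the linear equation and by the fixed-point iteration.
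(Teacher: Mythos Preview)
Your proposal is correct and follows essentially the same approach as the paper: both replace the Lipschitz bound on $p$ by the observation that $p(s,S_1)-p(s,S_2)$ is supported on an $s$-interval of length at most $\|\sigma'\|_\infty|S_1-S_2|$, combine this with the propagated $L^\infty_{s,x}$ bound $\|n\|_\infty\le\|n_0\|_\infty+p_\infty\|g\|_\infty$, and feed the resulting estimate into the same contraction scheme to obtain the identical constant $C$. Your derivation of the $L^\infty$ bound from the fixed-point formula \eqref{fixnl} is slightly more explicit than the paper's, but otherwise the arguments coincide.
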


\begin{proof}
	The proof is the same as for the previous theorem. Let $\mathcal{T}$ be the operator defined before, we have to verify the contraction principle. The estimates \eqref{bdN}-\eqref{diffw} for $N$ and $w$ remain unchanged. 
	
	Now, from the solution of linear problem \eqref{eql} we get for $n$ the uniform estimate 
	$$|n(t,s,x)|\le \|n_0\|_\infty+p_{\infty}\|g\|_\infty,\:\forall (t,s,x)\in [0,T]\times(0,\infty)\times\Omega.$$
 	Let $A\coloneqq\max\left\{\|w_0\|_\infty,\gamma\right\}$ and $B\coloneqq\|n_0\|_\infty+p_{\infty}\|g\|_\infty$. In this case the difference between $N_1$ and $N_2$ in \eqref{diffN} is replaced by
	\begin{equation*}
	\begin{split}
	|N_1-N_2|(t,x)
	&\le \int_0^\infty|p(s,S_1)-p(s,S_2)|\,n_1\,ds+\int_0^\infty p(s,S_2)|n_1-n_2|\,ds\\
	&\le p_\infty\left|\int_{\sigma(S_1)}^{\sigma(S_2)}n_1\,ds\right|+\int_0^\infty p(s,S_2)|n_1-n_2|\,ds\\
	&\le p_\infty\|\sigma'\|_\infty B\|S_1-S_2\|_\infty+p_{\infty}\|n_1-n_2\|_{L^\infty_{t,x}L^1_s}.
	\end{split}
	\end{equation*}
	And from \eqref{soln}, the difference between $n_1$ and $n_2$ satisfies
	$$\|n_1-n_2\|_{L^\infty_{t,x}L^1_s}\le 2Tp_\infty\|\sigma'\|_\infty B\|S_1-S_2\|_\infty+2Tp_\infty\|n_1-n_2\|_{L^\infty_{t,x}L^1_s}.$$
	Then, for $T<\tfrac{1}{2p_\infty}$ we conclude similarly
	$$\|n_1-n_2\|_{L^\infty_{t,x}L^1_s}\le\frac{2Tp_\infty\|\sigma'\|_\infty B}{1-2Tp_{\infty}}\|S_1-S_2\|_{\infty}.$$
	Hence, by combining the estimates for $N_1-N_2$ and $n_1-n_2$, the operator $\mathcal{T}$ verifies
	\begin{equation*}
	\begin{split}
	|\mathcal{T}[S_1]-\mathcal{T}[S_2]|&\le\int|w_1-w_2|\,N_1\,dy+\int|w_2|\,|N_1-N_2|\,dy\\
	&\le \big( 2\gamma T p_{\infty}+|\Omega|A\big)\|N_1-N_2\|_{\infty}\\
	&\le C\,\|S_1-S_2\|_{\infty},
	\end{split}
	\end{equation*}
	with $C>0$ given by
	$$C\coloneqq p_\infty\|\sigma'\|_\infty B\left(2\gamma T p_{\infty}+|\Omega|A\right)\left(1+\frac{2Tp_\infty}{1-2Tp_\infty}\right).$$
	Thus for $p_\infty\|\sigma'\|_\infty|\Omega|AB<1$ and $T$ small enough we get that $\mathcal{T}$ is a contraction and this implies the existence of a unique solution defined on $[0,T]$. Finally, we can iterate this argument to get a unique globally defined solution, like we asserted in the previous theorem.
\end{proof}

\section{Stationary states}
\label{stationary}
Assume the input $I$ depends only on position. We now study the stationary solutions of \eqref{eq0}, i.e. the system given by
\begin{equation}
\label{eq1}
\left\{
\begin{matrix*}[l]
\partial_s n(s,x)+p(s,S(x))n(s,x)=0& s>0,x\in\Omega\vspace{0.15cm},\\
N(x)\coloneqq n(s=0,x)=\int_0^\infty p(s,S(x))n(s,x)\,ds& x\in\Omega\vspace{0.15cm},\\
S(x)=\int_\Omega w(x,y)N(y)dy+I(x)& x\in\Omega\vspace{0.15cm},\\
w(x,y)=\gamma G(N(x),N(y))&x,y\in\Omega,\\
\end{matrix*}
\right.
\end{equation}
where $n\in L^1_{s,x},\:N,S\in C_b(\Omega)$ and $w\in\C_b(\Omega\times\Omega)$.

If the amplitude $S$ is given, we can determine $n,N$ and $w$ through the formulas
\begin{equation}
\label{solest}
\begin{matrix*}[l]
n(s,x)=N(x)e^{-\int_0^sp(\tau,S(x))\,d\tau}\vspace{0.15cm},\\
N(x)=g(x)\left(\int_0^\infty e^{-\int_0^up(\tau,S(x))\,d\tau}\,du\right)^{-1}\vspace{0.15cm},\\
w(x,y)=\gamma G\big(g(x)\,F(S(x)),g(y)\,F(S(y))\big).
\end{matrix*}
\end{equation}
We define $F\colon\R\to\R_+$ given by
\begin{equation}
	\label{CapF}
	F(S)\coloneqq\left(\int_0^\infty e^{-\int_0^sp(\tau,S)\,d\tau}\,ds\right)^{-1},
\end{equation}
and we get that $(n,N,S,w)$ in \eqref{solest} corresponds to a stationary solution of \eqref{eq0} if $S$ satisfies the following fixed point condition
\begin{equation}
\label{fixSeq}
	S(x)=\mathcal{T}[S](x)\coloneqq\gamma\int G\big(\,g(x)F(S(x)),\,g(y)F(S(y))\,\big)\,g(y)F(S(y))\,dy+I(x).
\end{equation}

The following result asserts that there exists a unique steady state for a given $g\in\C_b(\Omega)$, under weak interconnection regime.
\begin{thm}
\label{thmest}
Assume that $p\in W^{1,\infty}((0,\infty)\times\R)$ satisfies \eqref{lbp2} and $g\in\C_b(\Omega)$. For $\gamma$ small enough, the system \eqref{eq0} has a unique stationary state $(n^*,N^*,S^*,w^*)$, with $n^*\in \C_b(\Omega,L^1_s)$ satisfying $\int_0^\infty n^*(s,x)\,ds=g(x)$ and $N^*\in \C_b(\Omega),\,w^*\in\C_b(\Omega\times\Omega)$, which are determined by a unique amplitude of stimulation $S^*\in\C_b(\Omega)$ satisfying $\mathcal{T}[S^*]=S^*$.
\end{thm}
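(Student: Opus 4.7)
The plan is to reduce existence and uniqueness of a stationary state to a Banach fixed point argument on $\mathcal{C}_b(\Omega)$ for the nonlinear operator $\mathcal{T}$ defined in \eqref{fixSeq}. Once a unique $S^*$ with $\mathcal{T}[S^*]=S^*$ is obtained, the explicit formulas \eqref{solest} define $n^*$, $N^*$, $w^*$ componentwise, the mass identity $\int_0^\infty n^*(s,x)\,ds=g(x)$ is built into the definition of $N^*$, and continuity in $x$ of each object transfers from continuity of $g$, $I$, $p$, $G$ and $S^*$.

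The preliminary analytic step is to control $F$ from \eqref{CapF}. Writing $Q(S)\coloneqq 1/F(S)=\int_0^\infty e^{-\int_0^s p(\tau,S)\,d\tau}\,ds$, the upper bound $p\le p_\infty$ yields $Q\ge 1/p_\infty$, hence $F\le p_\infty$, while \eqref{lbp2} gives $e^{-\int_0^s p(\tau,S)\,d\tau}\le e^{-p_*(s-s_*)_+}$, so $Q\le s_*+1/p_*$ and $F\ge 1/(s_*+1/p_*)$. Differentiating under the integral,
$$Q'(S)=-\int_0^\infty e^{-\int_0^s p(\tau,S)\,d\tau}\left(\int_0^s \partial_S p(\tau,S)\,d\tau\right)ds,$$
and the same exponential tail combined with $\|\partial_S p\|_\infty<\infty$ gives a uniform bound on $|Q'|$, hence a uniform Lipschitz constant $L_F$ for $F=1/Q$ (using also the uniform lower bound on $Q$).

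Next I would check that $\mathcal{T}$ maps $\mathcal{C}_b(\Omega)$ into itself: boundedness follows from $\|G\|_\infty\le 1$, $\|g\|_\infty<\infty$ and $F\le p_\infty$, giving $\|\mathcal{T}[S]\|_\infty\le \gamma\|g\|_\infty p_\infty|\Omega|+\|I\|_\infty$, and continuity in $x$ follows from continuity of $g$, $I$ and $F\circ S$. For the contraction, abbreviate $a_i=g(x)F(S_i(x))$, $b_i=g(y)F(S_i(y))$ and split
$$|\mathcal{T}[S_1](x)-\mathcal{T}[S_2](x)|\le \gamma\int_\Omega\bigl(|G(a_1,b_1)-G(a_2,b_2)|\,b_1+|G(a_2,b_2)|\,|b_1-b_2|\bigr)dy.$$
Using $\|\nabla G\|_\infty\le 1$, $\|G\|_\infty\le 1$, the Lipschitz bound on $F$ and $b_i\le\|g\|_\infty p_\infty$, each summand is controlled by a multiple of $\|g\|_\infty L_F\|S_1-S_2\|_\infty$, leading to $\|\mathcal{T}[S_1]-\mathcal{T}[S_2]\|_\infty\le \gamma C\|S_1-S_2\|_\infty$ with an explicit constant $C$ depending only on $\|g\|_\infty$, $p_\infty$, $L_F$ and $|\Omega|$. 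Choosing $\gamma<1/C$ yields the contraction and thus a unique fixed point $S^*$.

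The algebra of the contraction step is routine; the genuinely delicate point is the uniform Lipschitz control of $F$, since one must extract enough decay from the exponential weight $e^{-\int_0^s p(\tau,S)\,d\tau}$ to absorb the linearly growing factor $\int_0^s \partial_S p(\tau,S)\,d\tau$. Assumption \eqref{lbp2}, which forces $p\ge p_*$ beyond $s_*$, is exactly what supplies this integrability.
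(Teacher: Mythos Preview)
Your proposal is correct and follows essentially the same route as the paper: first establish that $F$ is bounded and Lipschitz (the paper does this via a direct formula for $F'$, you via $Q=1/F$ and the chain rule, which amounts to the same computation), then run a Banach fixed point argument on $\mathcal{T}$ in $\mathcal{C}_b(\Omega)$ using the splitting of $G(a_1,b_1)b_1-G(a_2,b_2)b_2$ together with $\|G\|_\infty,\|\nabla G\|_\infty\le 1$. The only cosmetic difference is that the paper absorbs the $y$-integral via the normalization $\int_\Omega g=1$ rather than carrying an explicit factor of $|\Omega|$ in the contraction constant.
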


To prove the result we use the following lemma about the function $F$.

\begin{lem}
	Under the hypothesis of theorem \ref{thmest}, $F$ is a bounded and Lipschitz function.
\end{lem}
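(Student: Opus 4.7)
The plan is to write $F = 1/\phi$, where
\[
\phi(S) \coloneqq \int_0^\infty \exp\!\left(-\int_0^s p(\tau, S)\, d\tau\right) ds,
\]
and to prove two-sided strictly positive bounds on $\phi$ together with a Lipschitz estimate. Then $F$ inherits boundedness and Lipschitz continuity through the identity $F(S_1) - F(S_2) = (\phi(S_2) - \phi(S_1))/(\phi(S_1)\phi(S_2))$, using the uniform lower bound on $\phi$ to control the denominator.

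For the two-sided bounds on $\phi$, I would use both sides of \eqref{lbp2}. The upper bound $p \le p_\infty$ gives $\int_0^s p(\tau,S)\, d\tau \le p_\infty s$ and hence $\phi(S) \ge 1/p_\infty$, which yields $F(S) \le p_\infty$. The lower bound $p(\tau, S) \ge p_* \mathds{1}_{\{\tau > s_*\}}$ gives $\int_0^s p(\tau,S)\, d\tau \ge p_*(s-s_*)_+$, and splitting the integral defining $\phi$ at $s = s_*$ produces $\phi(S) \le s_* + 1/p_*$, so $F(S) \ge (s_* + 1/p_*)^{-1}$. These bounds are uniform in $S$.

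For the Lipschitz estimate on $\phi$, I would apply the elementary inequality $|e^{-a} - e^{-b}| \le e^{-\min(a,b)} |a - b|$ for $a, b \ge 0$ together with the mean-value bound $|p(\tau, S_1) - p(\tau, S_2)| \le \|\tfrac{\partial p}{\partial S}\|_\infty |S_1 - S_2|$, giving the pointwise estimate
\[
\left|e^{-\int_0^s p(\tau, S_1)\, d\tau} - e^{-\int_0^s p(\tau, S_2)\, d\tau}\right| \le s\, e^{-p_*(s - s_*)_+}\, \|\tfrac{\partial p}{\partial S}\|_\infty\, |S_1 - S_2|.
\]
Integrating in $s$ over $(0,\infty)$ and using that $s \mapsto s\, e^{-p_*(s - s_*)_+}$ is integrable yields $|\phi(S_1) - \phi(S_2)| \le C|S_1 - S_2|$ for an explicit constant $C$ depending only on $p_*$, $s_*$ and $\|\tfrac{\partial p}{\partial S}\|_\infty$; combined with the lower bound on $\phi$, this gives the desired Lipschitz estimate for $F$.

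The main (and essentially only) obstacle is integrability of the Lipschitz bound: the naive inequality $|e^{-a}-e^{-b}| \le |a-b|$ would produce a factor of $s$ without any decay, leading to a divergent integral. This is precisely why hypothesis \eqref{lbp2}, which enforces $p \ge p_* > 0$ after the threshold $s_*$, is crucial --- it supplies the exponential decay $e^{-p_*(s-s_*)_+}$ needed to offset the linear growth and close the estimate.
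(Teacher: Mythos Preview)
Your proposal is correct and follows essentially the same route as the paper: both arguments bound $F$ above by $p_\infty$ via $p\le p_\infty$, and both use the lower bound \eqref{lbp2} to produce the decay factor $e^{-p_*(s-s_*)_+}$ that makes $\int_0^\infty s\,e^{-p_*(s-s_*)_+}\,ds$ finite, which is the crux of the Lipschitz estimate. The only cosmetic difference is that the paper differentiates $F$ directly and bounds $|F'(S)|$, whereas you work with $\phi=1/F$ and a finite-difference inequality $|e^{-a}-e^{-b}|\le e^{-\min(a,b)}|a-b|$; your version has the mild advantage of not invoking differentiation under the integral sign, but the two computations are otherwise line-for-line equivalent and yield the same explicit constant.
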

\begin{proof}
	It readily follows that $F$ is bounded since it satisfies the following estimate
	$$0<F(S)\le \left(\int_0^{\infty}e^{-p_{\infty} s}\,ds\right)^{-1}=p_{\infty}.$$
	On the other hand, $F'$ is given by the formula
	$$F'(S)=F(S)^2\left[\int_0^ \infty e^{-\int_0^s p(\tau,S)\,d\tau}\left(\int_0^s\frac{\partial p}{\partial S}(\tau,S)\,d\tau\right)ds\right],$$
	so we have the following estimate
	\begin{equation*}
	\begin{split}
	|F'(S)|&\le p_{\infty}^2\left\|\frac{\partial p}{\partial S}\right\|_\infty\left[\int_0^\infty e^{-\int_0^s p(\tau,S)\,d\tau}s\,ds\right]\\
	&\le p_{\infty}^2\left\|\frac{\partial p}{\partial S}\right\|_\infty \left[\int_0^\infty e^{-p_{*}(s-s_*)_+}s\,ds\right]\\
	&\le p_{\infty}^2\left\|\frac{\partial p}{\partial S}\right\|_\infty\left[\frac{s_*^2}{2}+\frac{s_*}{p_*}+\frac{1}{p_*^2}\right].
	\end{split}
	\end{equation*}
	Hence $F$ is Lipschitz.
\end{proof}

\begin{rmk}
	In the case of $p$ defined in \eqref{pjump} we get
	$$F(S)=\frac{1}{p_{\infty}^{-1}+\sigma(S)},$$
	so $F$ bounded and Lipschitz since $\sigma$ is. Hence the theorem is also valid for this case.
\end{rmk}
Next, we conclude the proof of our main theorem.

\begin{proof}
	It is straightforward that in \eqref{fixSeq} $\mathcal{T}$ defines an operator that maps $\C_b(\Omega)\to\C_b(\Omega)$. Since $F$ is bounded and Lipschitz we get for $S_1,S_2\in\C_b(\Omega)$
	\begin{equation*}
	\begin{split}
	|\mathcal{T}[S_1]-\mathcal{T}[S_2]|(x)&\le2\gamma\|g\|_\infty\|F\|_\infty\|F'\|_\infty\|S_1-S_2\|_\infty\\
	&\quad+\gamma\|F'\|_\infty\|S_1-S_2\|_\infty, 
	\end{split}
	\end{equation*}
    Thus for $\gamma$ satisfying  $$\gamma\|F'\|_\infty\big(2\|g\|_\infty\|F\|_\infty+1\big)<1,$$
	the operator $\mathcal{T}$ is a contraction and there exists a unique $S^*\in\C_b(\Omega)$ such that $\mathcal{T}[S^*]=S^*$. Therefore we get a unique stationary state determined through the formulas in \eqref{solest}.
\end{proof}

\section{Convergence to equilibrium}
\label{conv}
Our next result about system \eqref{eq0} is the convergence to equilibrium when $t\to\infty$, under the weak interconnection regime i.e. with $\gamma$ and $\|\tfrac{\partial p}{\partial S}\|_\infty$ small enough. For the proof of this result we present two different approaches: the relative entropy method and the Doeblin theory applied to stochastic semi-groups.
\subsection{Entropy method approach}
Firstly we prove the convergence result when the firing rate $p$ is strictly positive by means of the relative entropy method studied in \cite{michel2005general,perthame2006transport} and following the ideas in \cite{kang2015}.
 
\begin{thm}[Long term behavior for the weak interconnection regime]
	\label{conveq0}
	 Assume \eqref{inidata}-\eqref{gx} and that $p\in W^{1,\infty}((0,\infty)\times\R)$ satisfies \eqref{lbp1}. For $\gamma$ and $\|\tfrac{\partial p}{\partial S}\|_\infty$ small enough let $(n^*,N^*,S^*,w^*)$ be the corresponding stationary state of \eqref{eq0}. Then there exist $C,\lambda>0$ such that the solution of \eqref{eq0} satisfies
	\begin{equation}
	\label{convn}
		\|n(t)-n^*\|_{L^1_{s,x}}+\|w(t)-w^*\|_{L^1_{x,y}}\le Ce^{-\lambda t}\left(\|n_0-n^*\|_{L^1_{s,x}}+\|w_0-w^*\|_{L^1_{x,y}}\right),\:\forall t\ge 0.
	\end{equation}
	Moreover $\|S(t)-S^*\|_{L^1_x}$ and $\|N(t)-N^*\|_{L^1_x}$ converge exponentially to $0$ when $t\to\infty$.
\end{thm}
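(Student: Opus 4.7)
The plan is to decompose the solution around the stationary state, extract the spectral gap of the linear elapsed time equation via the general relative entropy method, and close the resulting coupled system by a Grönwall-type argument exploiting the smallness of $\gamma$ and $\|\tfrac{\partial p}{\partial S}\|_\infty$.

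First, I would introduce the perturbations $h := n - n^*$, $\eta := w - w^*$, $\tilde N := N - N^*$, $\tilde S := S - S^*$. The conservation identity \eqref{gx} applied both to $n$ and $n^*$ gives the crucial zero-mean property $\int_0^\infty h(t,s,x)\,ds = 0$ for all $t,x$. The equation for $h$ has the form
$$\partial_t h + \partial_s h + p(s,S^*(x))\,h = R(t,s,x),$$
with source $R = -[p(s,S)-p(s,S^*)]\,n$ satisfying $\|R\|_{L^1_{s,x}} \le \|\tfrac{\partial p}{\partial S}\|_\infty \|g\|_\infty \|\tilde S\|_{L^1_x}$, and an analogous boundary perturbation at $s=0$. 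Meanwhile $\eta$ solves the linear ODE $\partial_t \eta = -\eta + \gamma\big[G(N(t,x),N(t,y)) - G(N^*(x),N^*(y))\big]$, whose right-hand side is bounded in $L^1_{x,y}$ by $2\gamma|\Omega|\,\|\tilde N\|_{L^1_x}$ thanks to \eqref{Gnormal}.

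Second, I would invoke the spectral gap for the linear renewal equation with firing rate satisfying \eqref{lbp1}, following the relative entropy technique of \cite{michel2005general,perthame2006transport,kang2015}: for each $x\in\Omega$, any zero-mean solution of the unperturbed equation with frozen stimulus $S^*(x)$ decays in $L^1_s$ as $e^{-\nu t}$, with $\nu>0$ depending only on $p_*$ and $p_\infty$ and thus uniform in $x$. Combining this with Duhamel's formula applied to $h$, and integrating the ODE for $\eta$, yields the Volterra inequalities
$$u(t) \le C e^{-\nu t} u(0) + C\,\|\tfrac{\partial p}{\partial S}\|_\infty \int_0^t e^{-\nu(t-\tau)} \|\tilde S(\tau)\|_{L^1_x}\,d\tau,$$
$$v(t) \le e^{-t}v(0) + 2\gamma|\Omega| \int_0^t e^{-(t-\tau)} \|\tilde N(\tau)\|_{L^1_x}\,d\tau,$$
where $u(t) := \|h(t)\|_{L^1_{s,x}}$ and $v(t) := \|\eta(t)\|_{L^1_{x,y}}$.

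Third, I would close the loop through the two algebraic bounds
$$\|\tilde N\|_{L^1_x} \le p_\infty u + \|\tfrac{\partial p}{\partial S}\|_\infty \|g\|_\infty \|\tilde S\|_{L^1_x},\qquad
\|\tilde S\|_{L^1_x} \le \max\{\|w_0\|_\infty,\gamma\}\,\|\tilde N\|_{L^1_x} + p_\infty\|g\|_\infty\,v,$$
which, under the weak interconnection condition from the well-posedness theorem, form a triangular system invertible to yield $\|\tilde N\|_{L^1_x}+\|\tilde S\|_{L^1_x} \le C'(u+v)$. Plugging back produces a $2\times 2$ integral system in $(u,v)$ whose cross-coupling coefficients are proportional to $\|\tfrac{\partial p}{\partial S}\|_\infty$ and $\gamma$. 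A standard exponential weight argument (multiplying by $e^{\lambda t}$ for some $\lambda < \min(\nu,1)$ and taking suprema) shows that, provided both parameters are small enough, $u(t)+v(t) \le C e^{-\lambda t}(u(0)+v(0))$, which is \eqref{convn}; the decay of $\|\tilde N\|_{L^1_x}$ and $\|\tilde S\|_{L^1_x}$ follows from the algebraic bounds above.

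The hardest step is the second one: extracting the spectral gap for the linear elapsed time equation in $L^1_s$ with a rate that is uniform in the parameter $x$ and quantitative in $p_*,p_\infty$. Once this linear estimate is available, the remaining work (Duhamel, Grönwall, inversion of a small perturbation of the identity) is routine.
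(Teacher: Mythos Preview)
Your proposal is correct but follows a different route than the paper's proof of this theorem. The paper works with \emph{direct differential inequalities} rather than Duhamel: it writes $\partial_t|n-n^*|+\partial_s|n-n^*|+p(s,S)|n-n^*|\le\|\partial_S p\|_\infty|S-S^*|n^*$ and $\partial_t|w-w^*|\le-|w-w^*|+\gamma(\cdots)$, integrates in $(s,x)$ and $(x,y)$, and then closes the algebraic estimates for $\|\tilde N\|_{L^1_x}$ and $\|\tilde S\|_{L^1_x}$ much as you do. The key difference is in your ``hardest step'': the paper does not invoke an abstract spectral gap at all. Instead, because $\int_0^\infty(n-n^*)\,ds=0$ and $p\ge p_*$ from \eqref{lbp1}, one has
\[
\int\Big|\int_0^\infty p(s,S)(n-n^*)\,ds\Big|\,dx=\int\Big|\int_0^\infty(p(s,S)-p_*)(n-n^*)\,ds\Big|\,dx\le\iint(p(s,S)-p_*)|n-n^*|\,ds\,dx,
\]
which, combined with the absorption term $-\iint p(s,S)|n-n^*|$, produces the gap $-p_*\iint|n-n^*|$ directly. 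Adding the two differential inequalities yields $\tfrac{d}{dt}(u+v)\le-(p_*-C_1)u-(1-C_2)v$ with $C_1,C_2$ small when $\gamma,\|\partial_S p\|_\infty$ are small, and \eqref{convn} follows.

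Your semigroup-plus-Duhamel strategy is essentially what the paper does in its \emph{second} proof (Theorem~\ref{conveq1}), where the linear decay is supplied by Doeblin's theorem and the argument is run in $L^\infty_xL^1_s$. The trade-off: the paper's entropy proof here is elementary, fully self-contained, and gives the explicit rate $p_*$, but it genuinely needs the strict lower bound \eqref{lbp1}; your approach is more modular and, as the paper's Doeblin section shows, extends to the weaker hypothesis \eqref{lbp2}. A minor point: in your bound for $\|\tilde S\|_{L^1_x}$ you should have a factor $|\Omega|$ on the $\|\tilde N\|_{L^1_x}$ term, and note that $\|w^*\|_\infty\le\gamma$ (not $\max\{\|w_0\|_\infty,\gamma\}$) since $w^*=\gamma G(N^*,N^*)$.
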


In other words, if interconnections are weak then solutions converge exponentially to equilibrium.

\begin{proof}
	Observe that $n-n^*$ and $w-w^*$ satisfy
	\begin{equation*}
	\begin{matrix}
	\partial_t(n-n^*)+\partial_s(n-n^*)+p(s,S)(n-n^*)=-(p(s,S)-p(s,S^*))n^*,\vspace{0.15cm}\\
	\partial_t(w-w^*)=-(w-w^*)+\gamma G(N(t,x),N(t,y))-\gamma G(N^*(x),N^*(y)),
	\end{matrix}
	\end{equation*}
	so we have the following inequalities
	\begin{equation*}
	\begin{matrix}
	\partial_t|n-n^*|+\partial_s|n-n^*|+p(s,S)|n-n^*|\le \left\|\tfrac{\partial p}{\partial S}\right\|_\infty |S-S^*|\,n^*,\vspace{0.15cm}\\
	\partial_t|w-w^*|\le-|w-w^*|+\gamma\big(|N(t,x)-N^*(x)|+|N(t,y)-N^*(y)|\big),
	\end{matrix}
	\end{equation*}
    By integrating with respect to the corresponding variables we get
	\begin{equation}
	\label{intpuce}
	\begin{matrix*}[l]
	\displaystyle
	\frac{\partial}{\partial t}\iint_0^\infty|n-n^*|\,ds\,dx+\iint_0^\infty p(s,S)|n-n^*|\,ds\,dx\le\int|N-N^*|\,dx+\|g\|_\infty\left\|\tfrac{\partial p}{\partial S}\right\|_\infty\int|S-S^*|\,dx,\vspace{0.15cm}\\
	\displaystyle
	\frac{\partial}{\partial t}\iint |w-w^*|\,dx\,dy\le-\iint |w-w^*|\,dx\,dy+2\gamma|\Omega|\,\int|N-N^*|\,dx.	
	\end{matrix*}
	\end{equation}
	Thus we have to estimate the terms in the right-hand side of both inequalities. For the difference between $N$ and $N^*$ we get
	\begin{equation}
	\label{N-N*}
		\int|N-N^*|\,dx\le\|g\|_\infty\left\|\tfrac{\partial p}{\partial S}\right\|_\infty\int|S-S^*|\,dx+\int\left|\int_0^\infty p(s,S)(n-n^*)\,ds\right|dx.
	\end{equation}
	Next, for the difference between $S$ and $S^*$ we obtain
	\begin{equation*}
		\begin{split}
		\int|S-S^*|\,dx&\le\iint w^*|N(t,y)-N^*(y)|\,dx\,dy+\iint N(t,y)|w-w^*|\,dx\,dy\\
		&\le\gamma|\Omega|\,\int|N-N^*|\,dx+p_\infty\|g\|_\infty\iint |w-w^*|\,dx\,dy,
		\end{split}
	\end{equation*}
	Hence from \eqref{N-N*}, the following inequality holds
	\begin{equation*}
	\begin{split}
		\int|S-S^*|\,dx&\le \gamma|\Omega|\,\|g\|_\infty\left\|\tfrac{\partial p}{\partial S}\right\|_\infty\int|S-S^*|\,dx+\gamma|\Omega|\, p_\infty\iint_0^\infty|n-n^*|\,ds\,dx\\
		&\quad+p_\infty\|g\|_\infty\iint |w-w^*|\,dx\,dy,
	\end{split}
	\end{equation*}
	and if $\beta\coloneqq\gamma|\Omega|\,\|g\|_\infty\left\|\tfrac{\partial p}{\partial S}\right\|_\infty<1$, we deduce the following estimate
	\begin{equation}
	\label{S-S*}
	\int|S-S^*|\,dx\le\frac{p_\infty}{1-\beta}\left(\gamma|\Omega|\,\iint_0^\infty|n-n^*|\,ds\,dx+\|g\|_\infty\iint |w-w^*|\,dx\,dy\right).
	\end{equation}
	Thus from \eqref{intpuce} we get
	\begin{equation}
	\begin{split}
	\frac{\partial}{\partial t}\iint_0^\infty|n-n^*|\,ds\,dx&\le-\iint_0^\infty p(s,S)|n-n^*|\,ds\,dx+\int\left|\int_0^\infty p(s,S)(n-n^*)\,ds\right|dx\\
	&\quad+\frac{2p_\infty\|g\|_\infty\|\frac{\partial p}{\partial S}\|_\infty}{1-\beta}\left(\gamma|\Omega|\,\iint_0^\infty|n-n^*|\,ds\,dx+\|g\|_\infty\iint |w-w^*|\,dx\,dy\right)
	\end{split}
	\end{equation}
	Since $\int_0^\infty(n-n^*)\,ds=0$ and $p\ge p_*$ we may use the argument from \cite{mischler2018,perthame2006transport} to get
	\begin{equation*}
	    \begin{split}
	       \int\left|\int_0^\infty p(s,S)(n-n^*)\,ds\right|dx&=\int\left|\int_0^\infty (p(s,S)-p_*)(n-n^*)\,ds\right|dx\\
	       &\le\iint_0^\infty (p(s,S)-p_*)|n-n^*|\,ds\,dx.
	    \end{split}
	\end{equation*}
	Therefore we deduce the following inequality for $n-n^*$
	\begin{equation}
	\begin{split}
	\frac{\partial}{\partial t}\iint_0^\infty|n-n^*|\,ds\,dx&\le-\left(p_*-\frac{2\beta p_\infty}{1-\beta}\right)\iint_0^\infty|n-n^*|\,ds\,dx\\&\quad+\frac{2p_\infty\|g\|_\infty^2\|\frac{\partial p}{\partial S}\|_\infty}{1-\beta}\iint |w-w^*|\,dx\,dy.
	\end{split}
	\end{equation}
	On the other hand from the second inequality in \eqref{intpuce} and estimate \eqref{N-N*} we get for $w-w^*$
	\begin{equation}
	\begin{split}
	\frac{\partial}{\partial t}\iint |w-w^*|\,dx\,dy&\le-\iint |w-w^*|\,dx\,dy+\frac{2\beta p_\infty\|g\|_\infty}{1-\beta}\iint |w-w^*|\,dx\,dy\\
	&\quad+2\gamma|\Omega|\,p_\infty\left(\frac{\beta}{1-\beta}+1\right)\iint_0^\infty|n-n^*|\,ds\,dx.
	\end{split}
	\end{equation}
	If we add these two inequalities we get an expression of the form
	\begin{equation}
	\begin{split}
	\frac{\partial}{\partial t}\left(\iint_0^\infty|n-n^*|\,ds\,dx+\iint |w-w^*|\,dx\,dy\right)&\le-(p_*-C_1)\iint_0^\infty|n-n^*|\,ds\,dx\\
	&\quad-(1-C_2)\iint |w-w^*|\,dx\,dy,
	\end{split}
	\end{equation}
	with $C_1,C_2>0$ given by
	$$\begin{matrix}
	\displaystyle
	C_1=2\gamma|\Omega| p_\infty\left(\frac{\|g\|_\infty\|\frac{\partial p}{\partial S}\|_\infty}{1-\beta}+\frac{\beta}{1-\beta}+1\right),\vspace{0.15cm}\\
	\displaystyle
	C_2=\frac{2\beta p_\infty\|g\|_\infty\|\frac{\partial p}{\partial S}\|_\infty}{1-\beta}\left(1+\gamma|\Omega|\,\right).
	\end{matrix}$$
	If $\gamma$ and $\|\frac{\partial p}{\partial S}\|_\infty$ are such that $C_1<p_*$ and $C_2<1$, we conclude, by solving the corresponding differential inequality, the existence of $C,\lambda>0$ satisfying the estimate \eqref{convn}. Furthermore the convergence of $N,S$ and $w$ readily follows from estimates \eqref{N-N*} and \eqref{S-S*}.   
\end{proof}

\subsection{Doeblin theory approach}
\label{doeblin}
The previous convergence result for the system \eqref{eq0} can be extended when the firing rate $p$ satisfies the hypothesis \eqref{lbp2} for a $s_*>0$ small enough (see \cite{PPD} for an example). We assert that this result is also valid when $p$ satisfies the condition \eqref{lbp2} with any $s_*>0$. In order to improve the convergence, we follow the ideas of Cañizo et al. in \cite{canizo2019asymptotic} to study the asymptotic behavior of the linear system \eqref{P_t} by means of Doeblin's theory.
\subsubsection{The linear case}
Given $S\in\C_b(\Omega)$, we consider the linear problem given by
\begin{equation}
\label{P_t}
\left\{
\begin{matrix*}[l]
\partial_t n+\partial_s n+p(s,S(x))n=0&t>0,s>0,x\in\Omega\vspace{0.15cm},\\
N(t,x)\coloneqq n(t,s=0,x)=\int_0^\infty p(s,S(x))n\,ds& t>0,x\in\Omega\vspace{0.15cm},\\
n(t=0,s,x)=n_0(s,x)&s\ge0,x\in\Omega.
\end{matrix*}
\right.
\end{equation}
From lemma \ref{linear} we know that this system has a unique solution $n\in\C_b([0,\infty)\times\Omega,L^1_{s})$. Since the variable $x$ is just a parameter, for a fixed $x\in\Omega$ we define from equation \eqref{P_t} the stochastic semi-group $P_t\colon L^1_s\to L^1_s$ given by
$$P_t n_0(s,x)=n(t,s,x).$$
A key property on the solutions of this system is the exponential convergence to equilibrium as we state in the following theorem:
\begin{thm}
	\label{DoeblinConv}
	Consider $n_0\in\C_b(\Omega,L^1_s)$ with its corresponding $g\in \C_b(\Omega)$ and that $p$ satisfies \eqref{lbp2}, then there exists a unique stationary solution $n^*$ of equation \eqref{P_t} satisfying $\int_0^\infty n^*(s,x)\,ds=g(x)$. Moreover, the corresponding solution of \eqref{P_t} satisfies
	$$\|n(t,\cdot,x)-n^*(\cdot,x)\|_{L^1_s}\le\frac{1}{1-\alpha}e^{-\lambda t}\|n_0(\cdot,x)-n^*(\cdot,x)\|_{L^1_s}\quad\forall t\ge 0,\,x\in\Omega.$$
	with $\alpha=p_{*}s_*e^{-2p_{\infty}s_*}$ and $\lambda=-\frac{\log(1-\alpha)}{2s_*}>0$.
\end{thm}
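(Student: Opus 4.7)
The plan is to treat the position $x \in \Omega$ as a parameter, so that \eqref{P_t} reduces for each fixed $x$ to a scalar renewal equation on $L^1_s$ with the time-independent amplitude $S(x)$. By Lemma~\ref{linear} the semigroup $P_t$ is mass preserving and positivity preserving, hence stochastic. Existence of a stationary profile is immediate from the formulas in Section~\ref{stationary}: the function $n^*(s,x) = g(x) F(S(x)) e^{-\int_0^s p(\tau,S(x))\,d\tau}$ solves the stationary equation, has total mass $g(x)$, and inherits its continuity in $x$ from $F$ and $S$. Uniqueness among $L^1_s$ stationary profiles with prescribed mass will be a byproduct of the contraction below. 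The substance of the theorem is therefore the exponential estimate, which I propose to obtain by verifying a Doeblin minorization of $P_{t_0}$ at $t_0 = 2 s_*$ and then applying the standard Doeblin convergence theorem, as in Cañizo et al.\ \cite{canizo2019asymptotic}.

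The key step is to prove that, uniformly in $x \in \Omega$ and in non-negative initial data $n_0(\cdot,x)\in L^1_s$ of total mass $g(x)$,
\begin{equation*}
n(2 s_*, s, x) \ \ge\ \alpha\, g(x)\, \nu(s) \qquad \text{for a.e.\ } s \in (0, s_*),
\end{equation*}
with $\nu = s_*^{-1}\mathds{1}_{(0,s_*)}$ a fixed probability density on $(0,\infty)$ and $\alpha$ the constant from the statement. To do this I first bound $N$ from below on the intermediate window $v \in (s_*, 2s_*)$. The boundary condition together with $p \ge p_*\mathds{1}_{\{s>s_*\}}$ yields $N(v,x) \ge p_* \int_{s_*}^\infty n(v,s,x)\,ds$, and the characteristic formula \eqref{soln} applied to the un-discharged contribution $s>v$ gives
\[
\int_v^\infty n(v,s,x)\,ds \ =\ \int_0^\infty n_0(u,x)\,e^{-\int_0^v p(\tau+u,S(x))\,d\tau}\,du \ \ge\ e^{-2 p_\infty s_*}\,g(x),
\]
since $\int_0^v p\,d\tau \le p_\infty v \le 2p_\infty s_*$ and the initial mass is $g(x)$. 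This gives $N(v,x)\ge p_* e^{-2 p_\infty s_*} g(x)$ on $(s_*,2s_*)$. Substituting this into the characteristic formula $n(2s_*,s,x) = N(2s_*-s,x)\exp\bigl(-\int_0^s p(\tau,S(x))\,d\tau\bigr)$, valid for $s\in(0,s_*)$, and bounding the remaining exponential factor uniformly from below on $(0,s_*)$ produces the required Doeblin minorization with a constant of the claimed form.

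Once the minorization is established, Doeblin's theorem applied slice by slice in $x$ gives that $P_{2s_*}$ is a strict contraction of ratio $1-\alpha$ on the codimension-one subspace of mean-zero elements of $L^1_s$. Iterating on successive intervals of length $2s_*$ and using that $P_t$ is itself a non-expansive map on $L^1_s$ for every $t$ (because it is Markov) to absorb the residual time less than $2s_*$ produces an exponential estimate with rate $\lambda = -\log(1-\alpha)/(2s_*)$ and prefactor $(1-\alpha)^{-1}$; applied to the signed initial datum $n_0(\cdot,x)-n^*(\cdot,x)$, which lies in the mean-zero subspace by \eqref{gx}, this gives the claimed bound. The main technical obstacle is the uniformity of the minorization across $x$ and across admissible initial distributions: the lower bound on $N(v,x)$ must depend on $n_0(\cdot,x)$ only through its total mass $g(x)$, which is why one is forced to use the crude upper bound $p \le p_\infty$ inside the exponential and to wait exactly $t_0 = 2s_*$ --- enough time for every characteristic issued from $s=0$ at any time in $(0,s_*)$ to have crossed the firing threshold $s_*$ and begin contributing to $N$.
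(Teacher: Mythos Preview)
Your proposal is correct and follows essentially the same Doeblin-minorization argument as the paper: bound $N$ from below on $(s_*,2s_*)$ using only the mass $g(x)$ of the initial datum, then propagate this along characteristics to minorize $n(2s_*,\cdot,x)$ on $(0,s_*)$, and finally invoke Doeblin's theorem slice by slice in $x$.

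One bookkeeping point: as written, your two successive \emph{uniform} bounds --- first $\int_0^v p \le 2p_\infty s_*$ in the estimate for $N(v,x)$, then $\int_0^s p \le p_\infty s_*$ at the final step --- combine to give only $p_* s_* e^{-3p_\infty s_*}$, not the stated $\alpha = p_* s_* e^{-2p_\infty s_*}$. To recover the exact constant, keep the sharper $v$-dependent bound $N(v,x) \ge p_* e^{-p_\infty v} g(x)$ that your own computation produces before you weaken it; then with $v = 2s_*-s$ the two exponential factors combine as $e^{-p_\infty v}\cdot e^{-p_\infty s} = e^{-2p_\infty s_*}$, independently of $s$, which is exactly how the paper tracks the constant.
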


For the sake of completeness, we include the proof of this result done by Cañizo et al. in the theorem 3.12 of \cite{canizo2019asymptotic}. In our case, functions have mass $g(x)$ instead of having mass $1$ with respect to $L^1_s$. We start by reminding some concepts on stochastic semi-groups and Doeblin's theorem.
\begin{defi}
	Let $X$ be a measure space and $P_t\colon L^1(X)\to L^1(X)$ be a linear semi-group. We say that $P_t$ is a stochastic semi-group if $P_t f\ge0$ for all $f\ge0$ and $\int_X P_t f=\int_X f$ for all $f\in L^1(X)$. In other words, $(P_t)$ preserves the subset of probability densities $\mathcal{P}(X)$.
\end{defi}	

\begin{defi}
	Let $P_t\colon L^1(X)\to L^1(X)$ be a stochastic semi-group. We say that $(P_t)$ satisfies Doeblin's condition if there exists $t_0>0,\,\alpha\in(0,1)$ and $\nu\in \mathcal{P}(X)$ such that
	$$P_{t_0}f\ge\alpha\nu\quad\forall f\in\mathcal{P}(X).$$
\end{defi}
\begin{thm}[Doeblin's Theorem] 
	Let $P_t\colon L^1(X)\to L^1(X)$ be a stochastic semi-group that satisfies Doeblin's condition. Then the semigroup has a unique equilibrium $n_*$ in $\mathcal{P}(X)$. Moreover, for all $n\in\mathcal{P}(X)$ we have
	$$\|P_t(n-n_*)\|_{L^1(X)}\le\frac{1}{1-\alpha}e^{-\lambda t}\|n-n_*\|_{L^1(X)}\quad\forall t\ge 0,$$
	with $\lambda=-\frac{\ln(1-\alpha)}{t_0}>0$.
\end{thm}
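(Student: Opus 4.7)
My plan is to show that the map $P_{t_0}$ is a strict contraction on the set of probability densities $\mathcal{P}(X)$ in the $L^1$ metric, with constant $1-\alpha$. Once this is established, Banach's fixed-point theorem on the complete metric subspace $\mathcal{P}(X) \subset L^1(X)$ immediately yields both existence and uniqueness of an equilibrium $n_* \in \mathcal{P}(X)$ for $P_{t_0}$. Invariance of $n_*$ under the full semigroup $(P_t)_{t\ge 0}$ will then follow by applying the semigroup property: $P_s n_*$ is also a fixed point of $P_{t_0}$ since $P_{t_0}(P_s n_*) = P_s(P_{t_0} n_*) = P_s n_*$, hence equals $n_*$ by uniqueness. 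Finally, the quantitative convergence estimate for arbitrary $t$ will be obtained by writing $t = kt_0 + r$ and combining the integer-step contraction with the standard $L^1$-nonexpansiveness of stochastic semigroups.

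The core step, and really the only place where Doeblin's condition enters, is the contraction estimate. For $f, g \in \mathcal{P}(X)$ I would use the Hahn--Jordan decomposition $f - g = (f-g)_+ - (f-g)_-$. Since $\int f = \int g = 1$, the positive and negative parts share the same mass $\beta \coloneqq \tfrac{1}{2}\|f-g\|_{L^1}$, so (assuming $\beta > 0$; otherwise the estimate is trivial) one can renormalize to probability densities $f_1, g_1$ such that $f - g = \beta(f_1 - g_1)$. Doeblin's condition applied to $f_1$ and $g_1$ produces two nonnegative functions $P_{t_0}f_1 - \alpha\nu$ and $P_{t_0}g_1 - \alpha\nu$ of total mass $1-\alpha$. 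Writing
\[
P_{t_0}(f-g) = \beta\bigl((P_{t_0}f_1 - \alpha\nu) - (P_{t_0}g_1 - \alpha\nu)\bigr)
\]
and applying the triangle inequality in $L^1$ yields $\|P_{t_0}(f-g)\|_{L^1} \le 2\beta(1-\alpha) = (1-\alpha)\|f-g\|_{L^1}$.

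Iterating the contraction gives $\|P_{t_0}^k(f-g)\|_{L^1} \le (1-\alpha)^k\|f-g\|_{L^1}$, which by Banach's theorem produces the unique equilibrium and settles convergence on the discrete time grid $kt_0$. For general $t \ge 0$ I would set $k = \lfloor t/t_0 \rfloor$ and $r = t - kt_0 \in [0, t_0)$; the $L^1$-nonexpansiveness $\|P_r h\|_{L^1} \le \|h\|_{L^1}$, which follows from $|P_r h| \le P_r |h|$ together with mass preservation, combined with the integer-step bound applied to $h = n - n_*$, gives $\|P_t(n-n_*)\|_{L^1} \le (1-\alpha)^k\|n-n_*\|_{L^1}$. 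Since $k \ge t/t_0 - 1$, the factor $(1-\alpha)^k$ is bounded by $\tfrac{1}{1-\alpha}e^{-\lambda t}$ with $\lambda = -\ln(1-\alpha)/t_0$, matching the claimed rate. The only genuinely delicate step is the Hahn--Jordan contraction argument; the extra factor $\tfrac{1}{1-\alpha}$ in the final bound is just the unavoidable price for the rounding from $t$ down to $kt_0$.
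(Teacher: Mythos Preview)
The paper does not actually prove Doeblin's Theorem; it is stated as a classical background result (with no proof environment following it) and then immediately invoked in the proof of the subsequent Theorem~\ref{DoeblinConv}. So there is nothing in the paper to compare against.

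That said, your argument is correct and is precisely the standard proof. The Hahn--Jordan step is the heart of it: because $f-g$ has zero integral, its positive and negative parts carry equal mass $\beta=\tfrac12\|f-g\|_{L^1}$, and after renormalizing each to a probability density you can subtract $\alpha\nu$ from both images under $P_{t_0}$, leaving two nonnegative functions of mass $1-\alpha$. The triangle inequality then gives the contraction factor $1-\alpha$. The passage from the discrete grid $kt_0$ to arbitrary $t$ via $L^1$-nonexpansiveness of $P_r$ and the bound $k\ge t/t_0-1$ is exactly what produces the prefactor $\tfrac{1}{1-\alpha}$ in the statement. Your remark that the full-semigroup invariance of $n_*$ follows from uniqueness of the $P_{t_0}$-fixed point together with commutativity $P_{t_0}P_s=P_sP_{t_0}$ is also the clean way to handle that point. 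Nothing is missing.
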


Next, we continue with the proof of theorem \ref{DoeblinConv}.

\begin{proof}
 	Let $n$ be the solution of \eqref{P_t}. For fixed $x\in\Omega$, we claim $n$ satisfies the following inequality
 	\begin{equation}
 	\label{doebl}
 		n(2s_*,s,x)=P_{2s_*}n_0(s,x)\ge p_{*}e^{-2p_{\infty}s_*}\,\mathds{1}_{[0,s_*]}(s)\,g(x)\quad\forall(s,x)\in(0,\infty)\times\Omega.
 	\end{equation}
	This means that the semi-group $P_t$ associated to equation \eqref{P_t} satisfies Doeblin's condition with $t_0=2s_*,\,\alpha=p_{*}s_*e^{-2p_{\infty}s_*}$ and $\nu=\tfrac{1}{s_*}\mathds{1}_{[0,s_*]}(s)$ for functions $n_0(\cdot,x)\in L^1_s$ with $g(x)=1$.
	
	Let $x\in\Omega$ be fixed and consider $\tilde{P}_t\colon L^1_{s}\to L^1_{s}$ the semi-group associated with the problem
	\begin{equation*}
	\left\{
	\begin{matrix*}[l]
	\partial_t \tilde{n}+\partial_s \tilde{n}+p(s,S(x))\tilde{n}=0&t>0,s>0\vspace{0.15cm},\\
	\tilde{n}(t,s=0,x)=0&t>0\vspace{0.15cm},\\
	\tilde{n}(t=0,s,x)=n_0(s,x)&s\ge0.
	\end{matrix*}
	\right.
	\end{equation*}
	In this case the solution is given by
	\begin{equation}
	\tilde{P}_t n_0(s,x)=n_0(s-t,x)\exp\left(-\int_0^tp(s-t+\tau,S(x))\,d\tau\right)\mathds{1}_{\{ s>t \}}.
	\end{equation}
	Then the solution of \eqref{P_t} satisfies
	$$n(t,s,x)=\tilde{P}_t n_0(s,x)+\int_0^t\tilde{P}_{t-\tau}(N(\tau,x)\delta_0(s))\,d\tau.$$
	Moreover we have the following inequalities
	\begin{equation*}
	\begin{matrix}
	n(t,s,x)\ge \tilde{P}_t n_0(s,x)=n_0(s-t,x)\exp\left(-\int_0^tp(s-t+\tau,S(x))\,d\tau\right)\ge n_0(s-t,x)e^{-p_{\infty}t}\mathds{1}_{\{ s>t\}}.\vspace{0.15cm}\\
	\tilde{P}_{t-\tau}n_0(s,x)\ge n_0(s-t+\tau,x)e^{-p_{\infty}(t-\tau)}\mathds{1}_{\{ s>t-\tau\}}.
	\end{matrix}
	\end{equation*}
	Then for $t>s_*$ we get
	\begin{equation*}
	\begin{split}
	N(t,x)&=\int_0^\infty p(s,S(x))n(t,s,x)\,ds\\
	&\ge p_{*}\int_{s_*}^\infty n(t,s,x)\,ds\\
	&\ge p_{*}\int_t^\infty n(t,s,x)\,ds\\
	&\ge p_{*}e^{-p_{\infty}t}\int_t^\infty n_0(s-t,x)\,ds\\
	&\ge p_{*}e^{-p_{\infty}t}g(x).
	\end{split}
	\end{equation*}
	In that case for any $s>0$ and $t>s+s_*$ we have that
	\begin{equation*}
	\begin{split}
	n(t,s,x)&\ge\int_0^t\tilde{P}_{t-\tau}(N(\tau,x)\delta_0(s))\,d\tau\\
	&\ge\int_{s_*}^t\tilde{P}_{t-\tau}(p_{*}e^{-p_{\infty}\tau}g(x)\delta_0(s))\,d\tau\\
	&\ge p_{*}\int_{s_*}^t\delta_0(s-t+\tau)e^{-p_{\infty}\tau}e^{-p_{\infty}(t-\tau)}g(x)\mathds{1}_{\{ s-t+\tau>0\}}\,d\tau\\
	&\ge p_{*}e^{-p_{\infty}t}\mathds{1}_{\{0<s<t-s_*\}}\,g(x).
	\end{split}
	\end{equation*}
	Therefore we get the estimate \eqref{doebl} by choosing $t=2s_*$.  Finally, the exponential convergence to equilibrium readily follows from Doeblin's theorem with $\lambda=-\frac{\ln(1-\alpha)}{t_0}>0$ and from normalizing by $g(x)$.
\end{proof}

\begin{rmk}
	Doeblin's condition is also verified for the case $p$ defined in \eqref{pjump}, even when $\sigma$ is unbounded. Since the amplitude $S$ is uniformly bounded in the system \eqref{eq0}, we can relax the condition \eqref{lbp2} for $S$ lying in some bounded interval instead of for all $S\in\R$. Therefore the exponential convergence to equilibrium is valid as well.
\end{rmk}

\subsubsection{The non-linear case}
The linear theory allows to determine the asymptotic behavior of the non-linear system \eqref{eq0} for the weak interconnection regime as well. By using Duhamel's formula, it is possible to conclude the improved version of theorem \ref{conveq0}.
\begin{thm}[Improved convergence to equilibrium]
	\label{conveq1}
	Assume \eqref{inidata}-\eqref{gx} and that $p\in W^{1,\infty}((0,\infty)\times\R)$ satisfies \eqref{lbp2}. For $\gamma$ and $\|\tfrac{\partial p}{\partial S}\|_\infty$ small enough let $(n^*,N^*,S^*,w^*)$ be the corresponding stationary state of \eqref{eq0}. Then there exist $C,\lambda>0$ such that the solution $n$ of \eqref{eq0} satisfies
	$$\|n(t)-n^*\|_{L^\infty_x L^1_s}+\|w(t)-w^*\|_\infty\le Ce^{-\lambda t}\left(\|n_0-n^*\|_{L^\infty_x L^1_s}+\|w_0-w^*\|_\infty\right),\:\forall t\ge0.$$
	Moreover $\|S(t)-S^*\|_\infty$ and $\|N(t)-N^*\|_\infty$ converge exponentially to $0$ when $t\to\infty$.
\end{thm}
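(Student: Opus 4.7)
The plan is to linearize around the stationary state and apply Duhamel's formula with respect to the linear semi-group from Theorem \ref{DoeblinConv} where the amplitude is frozen at $S^*(x)$; the uniform-in-$x$ Doeblin decay controls the free evolution, while the source is treated perturbatively using the smallness of $\gamma$ and $\|\tfrac{\partial p}{\partial S}\|_\infty$. Setting $v\coloneqq n-n^*$, a direct computation shows that $v$ solves the linearized renewal equation with coefficient $p(s,S^*(x))$, with volume source $F_1(t,s,x)\coloneqq-\bigl[p(s,S(t,x))-p(s,S^*(x))\bigr]n(t,s,x)$ and an analogous boundary correction $-\int_0^\infty F_1\,ds$ at $s=0$. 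Both perturbations have total mass of equal magnitude and opposite sign, so the conservation $\int_0^\infty v\,ds\equiv0$ guaranteed by \eqref{gx} is preserved.

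Denote by $\mathcal{Q}^x_t$ the semi-group from Theorem \ref{DoeblinConv} with $S$ set to the constant $S^*(x)$. Since the Doeblin constants $\alpha=p_*s_*e^{-2p_\infty s_*}$ and $\lambda=-\log(1-\alpha)/(2s_*)$ depend only on $p_*,p_\infty,s_*$, the decay $\|\mathcal{Q}^x_t v_0(\cdot,x)\|_{L^1_s}\le\tfrac{1}{1-\alpha}e^{-\lambda t}\|v_0(\cdot,x)\|_{L^1_s}$ holds uniformly in $x$. Duhamel then gives
$$\|n(t)-n^*\|_{L^\infty_x L^1_s}\le\tfrac{1}{1-\alpha}e^{-\lambda t}\|n_0-n^*\|_{L^\infty_x L^1_s}+C\int_0^t e^{-\lambda(t-\tau)}\|\tfrac{\partial p}{\partial S}\|_\infty\,\|g\|_\infty\,\|S(\tau)-S^*\|_\infty\,d\tau,$$
after bounding the total variation of the source by $2\|\tfrac{\partial p}{\partial S}\|_\infty\|g\|_\infty\|S(\tau)-S^*\|_\infty$ and applying Doeblin to each $\mathcal{Q}^x_{t-\tau}$ acting on the (mean-zero) source. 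In parallel, the ODE for $w-w^*$ together with \eqref{Gnormal} yields
$$\|w(t)-w^*\|_\infty\le e^{-t}\|w_0-w^*\|_\infty+2\gamma\int_0^t e^{-(t-\tau)}\|N(\tau)-N^*\|_\infty\,d\tau,$$
and, as in Theorem \ref{conveq0} but in supremum norm, for $\gamma|\Omega|\|g\|_\infty\|\tfrac{\partial p}{\partial S}\|_\infty<1$ one gets the algebraic closure $\|S-S^*\|_\infty+\|N-N^*\|_\infty\le C\bigl(\|n-n^*\|_{L^\infty_xL^1_s}+\|w-w^*\|_\infty\bigr)$.

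Setting $U(t)\coloneqq\|n(t)-n^*\|_{L^\infty_xL^1_s}+\|w(t)-w^*\|_\infty$, the three preceding inequalities combine into
$$U(t)\le C_0e^{-\mu t}U(0)+C_1\bigl(\gamma+\|\tfrac{\partial p}{\partial S}\|_\infty\bigr)\int_0^t e^{-\mu(t-\tau)}U(\tau)\,d\tau,$$
with $\mu\coloneqq\min(\lambda,1)$. For $\gamma$ and $\|\tfrac{\partial p}{\partial S}\|_\infty$ small enough that $C_1(\gamma+\|\tfrac{\partial p}{\partial S}\|_\infty)<\mu$, a generalized Gronwall argument yields $U(t)\le Ce^{-\lambda' t}U(0)$ for some $\lambda'\in(0,\mu)$, which is the claim; exponential decay of $\|S(t)-S^*\|_\infty$ and $\|N(t)-N^*\|_\infty$ then follows from the algebraic estimate. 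The main obstacle I anticipate is the handling of the boundary perturbation in Duhamel's formula: the source contains a Dirac mass at $s=0$, so one must justify that Doeblin applies to $\mathcal{Q}^x_{t-\tau}$ acting on a genuine measure. This is resolved by exploiting the smoothing of $\mathcal{Q}^x_t$ for $t>s_*$, after which $\mathcal{Q}^x_t[\delta_0]$ lies in $L^1_s$ and Theorem \ref{DoeblinConv} applies verbatim to the mean-zero combinations, so that the singular contribution is absorbed after one time-$s_*$ window.
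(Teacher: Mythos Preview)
Your proposal is correct and follows essentially the same route as the paper: freeze the amplitude at $S^*$, write the evolution of $n-n^*$ via Duhamel with source $h=(p(s,S^*)-p(s,S))n+\delta_0(s)\int(p(u,S)-p(u,S^*))n\,du$, apply the uniform Doeblin decay of Theorem~\ref{DoeblinConv} (using $\int h\,ds=0$), couple with the variation-of-constants estimate for $w-w^*$ and the algebraic bounds on $\|S-S^*\|_\infty,\|N-N^*\|_\infty$, and close with Gronwall on $U(t)$ with rate $\min(\lambda,1)$. Your discussion of the $\delta_0$ contribution is in fact more explicit than the paper's, which simply treats $h$ as a measure in $L^1_s$ and invokes the Doeblin contraction on mean-zero data without further comment.
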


\begin{proof}
	Observe that $n$ satisfies the evolution equation
	$$\partial_t n=\mathcal{L}_S [n]\coloneqq-\partial_s n-p(s,S)n+\delta_0(s)\int_0^\infty p(u,S(t,x))n(t,u,x)\,du.$$
	We can rewrite the evolution equation as
	\begin{equation}
	\label{nt=Ln}
	\partial_t n=\mathcal{L}_{S^*}[n]+(\mathcal{L}_{S}[n]-\mathcal{L}_{S^*}[n])=\mathcal{L}_{S^*}[n]+h.
	\end{equation}
	with $h(t,s,x)$ given by
	\begin{equation}
	h=\big(p(s,S^*(x))-p(s,S(t,x))\big)n(t,s,x)+\delta_0(s)\int_0^\infty\big(p(u,S(t,x))-p(u,S^*(x))\big)n(t,u,x)\,du.
	\end{equation}
	Let $P_t$ be the linear semi-group associated to operator $\mathcal{L}_{S^*}$. Since $P_t n^*=n^*$ for all $t\ge0$, we get that $n$ satisfies
	\begin{equation}
	\label{n-n*}
	n-n^*=P_t (n_0-n^*)+\int_0^t P_{t-\tau}h(\tau,s,x)\,d\tau,
	\end{equation}
	so we need find an estimate for the function $h$. Analogously to the proof of theorem \ref{conveq0}, we have the following inequalities:
	\begin{equation*}
		\begin{matrix}
		\|S(t)-S^*\|_\infty\le p_\infty\|w(t)-w^*\|_\infty+\gamma|\Omega|\,\|N(t)-N^*\|_\infty,\vspace{0.15cm}\\
		\|N(t)-N^*\|_\infty\le\|\tfrac{\partial p}{\partial S}\|_\infty\|S(t)-S^*\|_\infty+p_\infty\|n(t)-n^*\|_{L^\infty_xL^1_s},
		\end{matrix}
	\end{equation*}
    With $C_1\coloneqq\gamma|\Omega|\|\tfrac{\partial p}{\partial S}\|_\infty<1$, we get from these inequalities
	\begin{equation}
	\label{dSN}
	\begin{matrix}
	\displaystyle\|S(t)-S^*\|_\infty\le \frac{p_\infty}{1-C_1}\left(\|w(t)-w^*\|_\infty+\gamma|\Omega|\,\|n(t)-n^*\|_{L^\infty_xL^1_s}\right),\vspace{0.15cm}\\
	\displaystyle \|N(t)-N^*\|_\infty\le\frac{p_\infty}{1-C_1}\left(\|\tfrac{\partial p}{\partial S}\|_\infty\|w(t)-w^*\|_\infty+\|n(t)-n^*\|_{L^\infty_x L^1_s}\right).
	\end{matrix}
	\end{equation}
	Thus for $h$ we get
	\begin{equation}
	\begin{split}
	\|h(t)\|_{L^\infty_xL^1_s}&\le 2\|g\|_\infty\|\tfrac{\partial p}{\partial S}\|_\infty\|S(t)-S_*\|_\infty\\
	&\le\frac{2p_\infty\|g\|_\infty\|\tfrac{\partial p}{\partial S}\|_\infty}{1-C_1}\left(\|w(t)-w^*\|_\infty+\gamma|\Omega|\,\|n(t)-n^*\|_{L^\infty_xL^1_s}\right)\\
	&\le C_2\left(\|w(t)-w^*\|_\infty+\|n(t)-n^*\|_{L^\infty_xL^1_s}\right),
	\end{split}
	\end{equation}
	with $C_2\coloneqq\frac{2p_\infty\|g\|_\infty\|\tfrac{\partial p}{\partial S}\|_\infty}{1-C_1}\max\{1,\gamma|\Omega|\,\}$. On the one hand, using theorem \ref{DoeblinConv} and the fact that $\int_0^\infty h(t,s,x)\,ds=0$, we get from \eqref{n-n*}
	\begin{equation*}
	\begin{split}
		\|n(t)-n_*\|_{L^\infty_xL^1_s}&\le \|P_t(n_0-n_*)\|_{L^\infty_xL^1_s}+\int_0^t\|P_{t-\tau}h(\tau)\|_{L^\infty_xL^1_s}\,d\tau\\
		&\le \frac{e^{-\lambda t}}{1-\alpha}\|n_0-n_*\|_{L^\infty_xL^1_s}+\frac{1}{1-\alpha}\int_0^te^{-\lambda(t-\tau)}\|h(\tau)\|_{L^\infty_xL^1_s}\,d\tau\\
		&\le\frac{e^{-\lambda t}}{1-\alpha}\|n_0-n_*\|_{L^\infty_xL^1_s}+\frac{C_2}{1-\alpha}\int_0^t e^{-\lambda(t-\tau)}\left(\|w(\tau)-w^*\|_\infty+\|n(\tau)-n^*\|_{L^\infty_xL^1_s}\right)\,d\tau,
	\end{split}
	\end{equation*}
	with $\alpha=p_{*}s_*e^{-2p_{\infty}s_*},\,\lambda=-\tfrac{\ln(1-\alpha)}{2s_*}>0$. On the other hand, from the second inequality in \eqref{dSN} we deduce
	\begin{equation*}
	\begin{split}
	\|w(t)-w^*\|_\infty&\le e^{-t}\|w_0-w^*\|_\infty+2\gamma\,\int_0^t e^{-(t-\tau)}\|N(\tau)-N^*\|_\infty\,d\tau\\
	&\le e^{-t}\|w_0-w^*\|_\infty+C_3\int_0^t e^{-(t-\tau)}\left(\|w(\tau)-w^*\|_\infty+\|n(\tau)-n^*\|_{L^\infty_xL^1_s}\right)\,d\tau,
	\end{split}		
	\end{equation*}
	with $C_3\coloneqq\frac{2\gamma p_\infty}{1-C_1}\max\{\|\tfrac{\partial p}{\partial S}\|_\infty,1\}$. Hence we get
	\begin{equation*}
		\begin{split}
	\|n(t)-n_*\|_{L^\infty_xL^1_s}+\|w(t)-w^*\|_\infty&\le \frac{e^{-\tilde\lambda t}}{1-\alpha}\left(\|n_0-n_*\|_{L^\infty_xL^1_s}+\|w_0-w^*\|_\infty\right)\\
	&\quad+C_4 e^{-\tilde\lambda t}\int_0^t e^{\tilde\lambda\tau}\left(\|w(\tau)-w^*\|_\infty+\|n(\tau)-n^*\|_{L^\infty_xL^1_s}\right)\,d\tau,
		\end{split}
	\end{equation*}
	with $\tilde\lambda\coloneqq\min\{\lambda,1\},\,C_4\coloneqq\max\left\{\frac{C_2}{1-\alpha},C_3\right\}$. Therefore, by using Gronwall's inequality we have
	$$	\|n(t)-n_*\|_{L^\infty_xL^1_s}+\|w(t)-w^*\|_\infty\le\frac{e^{-(\tilde\lambda-C_4)t}}{1-\alpha}\left(\|n_0-n_*\|_{L^\infty_xL^1_s}+\|w_0-w^*\|_\infty\right).$$
	So we get the result if $\gamma$ and $\|\tfrac{\partial p}{\partial S}\|_\infty$ are small enough so that $C_4<\tilde{\lambda}$. The exponential convergence of $N$ and $S$ readily follows from the estimates in \eqref{dSN}.
\end{proof}

\begin{rmk}
	If in addition $n_0\in L^\infty_{s,x}$, the result is also valid for $p$ defined in \eqref{pjump} by replacing the estimates involving $\|\tfrac{\partial p}{\partial S}\|_\infty$ by its equivalent with $\|\sigma'\|_\infty$ small enough.
\end{rmk}

\subsection{Effect of large inputs}
We now study the asymptotic behavior for a large enough input in the system \eqref{eq0}. For $k>0$ consider $n^k(t,s,x)$ a solution of the system 
\begin{equation}
\label{eqbeta}
\left\{
\begin{matrix*}[l]
\partial_t n+\partial_s n+p(s,S(t,x))n=0&t>0,s>0,x\in\Omega\vspace{0.15cm},\\
N(t,x)\coloneqq n(t,s=0,x)=\int_0^\infty p(s,S(t,x))n\,ds&t>0,x\in\Omega\vspace{0.15cm},\\
S(t,x)=\int_\Omega w(t,x,y)N(t,y)dy+k I(x)&t>0,x\in\Omega\vspace{0.15cm},\\
\partial_t w= -w + \gamma G(N(t,x),N(t,y))&t>0,\,x,y\in\Omega\vspace{0.15cm},\\
n(t=0,s,x)=n_0(s,x)\ge0,\:w(t=0,x,y)=w_0(x,y)\ge0&s\ge0,\,x,y\in\Omega.
\end{matrix*}
\right.
\end{equation}
We prove by the means of Deoblin's theroy that if $k$ tends to infinity, then the solutions of \eqref{eqbeta} converge to a solution of linear problem \eqref{eql}.
\begin{thm}
	\label{largeinput}
	Assume \eqref{inidata}-\eqref{gx} with $p\in W^{1,\infty}((0,\infty)\times\R)$ satisfying \eqref{lbp2} and such that $p(s,\infty)\coloneqq\lim_{S\to\infty} p(s,S)$ exists for all $s\ge0$. Moreover suppose that $I(x)>0$ almost everywhere in $\Omega$. Let $n^\infty$ be the solution of linear problem
	\begin{equation}
	\left\{
	\begin{matrix*}[l]
	\partial_t n+\partial_s n+p(s,\infty)n=0&t>0,s>0,x\in\Omega\vspace{0.15cm},\\
	N(t,x)\coloneqq n(t,s=0,x)=\int_0^\infty p(s,\infty)n\,ds&t>0,x\in\Omega\vspace{0.15cm},\\
	n(t=0,s,x)=n_0(s,x)&s\ge0,x\in\Omega.
	\end{matrix*}
	\right.
	\end{equation}
	Then for all $t>0$ we have $n^k(t)\to n^\infty(t)$ in $L^1_{s,x}$ when $k\to\infty$.
\end{thm}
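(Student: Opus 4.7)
The plan is to compare $n^k$ with $n^\infty$ via the characteristic formula \eqref{fixnl} and propagate the pointwise divergence $S^k(t,x)\to\infty$ through every quantity. The underlying observation is that since $I(x)>0$ almost everywhere and the coupling term $\int w^k N^k\,dy$ is uniformly bounded in $k$, the amplitude $S^k$ is driven to infinity by $kI(x)$, forcing the firing rate to approach its autonomous limit $p(s,\infty)$.

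First I would record the lower bound $S^k(t,x)\ge kI(x)-C$ coming from the uniform estimates \eqref{bdN} and \eqref{bdw}, with $C$ independent of $k,t,x$. This yields $S^k(t,x)\to\infty$ pointwise a.e. in $\Omega$, uniformly on bounded time intervals. Combined with the hypothesis that $p(s,\infty)$ exists and Lebesgue's dominated convergence theorem, the survival factors $E^k_A(t,s,x)=\exp(-\int_0^t p(\tau+s-t,S^k(\tau,x))\,d\tau)$ and $E^k_B(t,s,x)=\exp(-\int_0^s p(\tau,S^k(t-s+\tau,x))\,d\tau)$ converge pointwise to their $x$-independent analogues $E^\infty_A(t,s)$ and $E^\infty_B(s)$ associated with the autonomous limit equation.

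Writing the characteristic formula \eqref{fixnl} for both $n^k$ and $n^\infty$, the difference $n^k-n^\infty$ decomposes into an initial-data contribution on $\{s>t\}$ and an activity-driven contribution on $\{0<s<t\}$. The first piece converges to $0$ in $L^1_{s,x}$ by dominated convergence with dominant $2n_0(s-t,x)\in L^1$. For the second piece, plugging the characteristic formula back into the definition $N^k(t,x)=\int p(s,S^k)n^k\,ds$ yields a closed Volterra equation for $N^k$; subtracting the analogue for $N^\infty$ produces an inequality of the form $|N^k-N^\infty|(t,x)\le p_\infty\int_0^t|N^k-N^\infty|(\tau,x)\,d\tau+R^k(t,x)$, where the remainder $R^k$ involves only $p(s,S^k)-p(s,\infty)$ and $E^k_B-E^\infty_B$ paired against bounded quantities. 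The pointwise convergences of the previous step together with dominated convergence give $R^k(t,x)\to0$ a.e., Gronwall converts this into $|N^k-N^\infty|(t,x)\to0$ a.e., and a final dominated-convergence step upgrades the full difference to convergence in $L^1_{s,x}$.

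The main technical point will be the estimate $\int_0^\infty|p(s,S^k(\tau,x))-p(s,\infty)|n^k(\tau,s,x)\,ds\to0$ that drives $R^k$ to zero. It rests on two ingredients: uniform integrability of $\{n^k(\tau,\cdot,x)\}_k$ in $s$, which follows from the a priori bound $n^k(\tau,s,x)\le n_0(s-\tau,x)\mathds{1}_{s>\tau}+p_\infty\|g\|_\infty\mathds{1}_{0<s<\tau}$ provided by \eqref{fixnl} and \eqref{bdN}, making the tails uniformly small in $k$; and uniform-on-compacts convergence of $p(\cdot,S^k(\tau,x))\to p(\cdot,\infty)$ on $s$-intervals, which follows from the equi-Lipschitz regularity of $p$ in $s$ inherited from $p\in W^{1,\infty}$. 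Once these two ingredients are in hand, the rest is routine bookkeeping with dominated convergence.
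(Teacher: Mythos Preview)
Your approach is correct and takes a genuinely different route from the paper's. The paper rewrites $\partial_t n^k=\mathcal{L}_\infty[n^k]+h^k$ with $h^k=(\mathcal{L}_{S^k}-\mathcal{L}_\infty)[n^k]$, applies Duhamel's formula against the limit semigroup $P_t$ generated by $\mathcal{L}_\infty$, and then uses the Doeblin contraction of Theorem~\ref{DoeblinConv} (exploiting $\int_0^\infty h^k\,ds=0$) to obtain $\|n^k(t)-n^\infty(t)\|_{L^1_{s,x}}\le\int_0^t e^{-(t-\tau)}\|h^k(\tau)\|_{L^1_{s,x}}\,d\tau$; the convergence $\|h^k(\tau)\|_{L^1_{s,x}}\to0$ then follows from the same pointwise bound on $n^k$ and Lebesgue's theorem that you invoke. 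Your route through the characteristic formula \eqref{fixnl} and a Volterra--Gronwall argument on $|N^k-N^\infty|$ is more elementary in that it bypasses the Doeblin machinery entirely, at the price of slightly longer bookkeeping; the paper's argument is shorter because it reuses the semigroup contraction already established in Section~\ref{doeblin}. Incidentally, your lower bound $S^k\ge kI-C$ is more robust than the paper's $S^k\ge kI$, which tacitly requires $\int w^k N^k\,dy\ge0$. One small simplification: the uniform-on-compacts convergence of $p(\cdot,S^k)$ you argue via equi-Lipschitz regularity is correct but unnecessary, since the $k$-independent $L^1_s$ majorant $n_0(s-\tau,x)\mathds{1}_{s>\tau}+p_\infty\|g\|_\infty\mathds{1}_{0<s<\tau}$ already makes plain dominated convergence sufficient for the key integral $\int_0^\infty|p(s,S^k)-p(s,\infty)|n^k\,ds\to0$.
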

\begin{proof}
	Let $\mathcal{L}_S$ be the operator defined in \eqref{nt=Ln}. In the same way we define the operator $\mathcal{L}_\infty$ given by
	$$\mathcal{L}_\infty[n]\coloneqq-\partial_s n-p(s,\infty)n+\delta_0(s)\int_0^\infty p(u,\infty)n(t,u,x)\,du.$$
	Thus we rewrite the evolution equation of $n^k$ as
	\begin{equation*}
	\partial_t n^k=\mathcal{L}_{\infty}[n]+(\mathcal{L}_{S}[n]-\mathcal{L}_{\infty}[n])=\mathcal{L}_{\infty}[n]+h.
	\end{equation*}
	with $h(t,s,x)$ given by
	\begin{equation*}
	h=(p(s,\infty)-p(s,S(t,x)))n(t,s,x)+\delta_0(s)\int_0^\infty(p(u,S(t,x))-p(u,\infty))n(t,u,x)\,du,
	\end{equation*}
	so we get 
	$$\|h\|_{L^1_{s,x}}\le 2\iint_0^\infty|p(s,\infty)-p(s,S(t,x))|n(t,s,x)\,ds\,dx.$$
	Since $S(t,x)\ge k I(x)$ we get that for all $t>0$ and a.e. $x\in\Omega$ that $S(t,x)\to\infty$ when $k\to\infty$ and thus for all $s\ge0$ we have $p(s,S(t,x))\to p(s,\infty)$. From the method of characteristics we get that $n$ satisfies
	$$n^k(t,s,x)\le n_0(t-s,x)+p_\infty g(x)\mathds{1}_{\{0<s<t\}},$$
	hence by Lebesgue's theorem we conclude for all $t>0$ that $\|h(t)\|_{L^1_{s,x}}\to0$ when $k\to\infty$.
	
	Let $P_t$ be the semi-group associated to $\mathcal{L}_\infty$. Since $P_t[n_0]=n^\infty$ we get that $n^k$ satisfies
	$$n^k-n^\infty=\int_0^t P_{t-\tau}h(\tau,s,x)\,d\tau.$$
	Since $\int_0^\infty h(t,s,x)\,ds=0$ we conclude by Doeblin's theorem that
	\begin{equation*}
	\begin{split}
	\|n^k(t)-n^\infty(t)\|&\le\int_0^t\|P_{t-\tau}h(\tau)\|_{L^1_{s,x}}\,d\tau\\
	&\le\int_0^t e^{-(t-\tau)}\|h(\tau)\|_{L^1_{s,x}}\,d\tau.
	\end{split}
	\end{equation*}
	And since $\|h(t)\|_{L^1_{s,x}}\le 4p_\infty$, we conclude the result by Lebesgue's theorem.
\end{proof}

\begin{rmk}
	In the case of $p$ defined in \eqref{pjump} the same result holds if $\lim_{S\to\infty}\sigma(S)$ exists. Moreover for the particular case $\sigma(S)=S$, the result is straightforward from the fact that $p(s,\infty)=0$ and $N^k\to 0$ so $n^\infty$ is solution of a simple transport equation.
\end{rmk}
\section{Slow learning dynamics}
\label{slowlearning}
From a neuroscience viewpoint we can assume that the learning dynamics are much slower than the elapsed time dynamics. This is represented by the rescaled system
\begin{equation}
\label{eqeps}
\left\{
\begin{matrix*}[l]
\varepsilon\partial_t n+\partial_s n+p(s,S(t,x))n=0&t>0,s>0,x\in\Omega\vspace{0.15cm},\\
N(t,x)\coloneqq n(t,s=0,x)=\int_0^\infty p(s,S(t,x))n\,ds&t>0,x\in\Omega\vspace{0.15cm},\\
S(t,x)=\int w(t,x,y)N(t,y)dy+I(t,x)&t>0,x\in\Omega\vspace{0.15cm},\\
\partial_t w= -w+ \gamma G(N(t,x),N(t,y))&t>0,\,x,y\in\Omega\vspace{0.15cm},\\
n(t=0,s,x)=n_0(s,x)\ge0,\:w(t=0,x,y)=w_0(x,y)\ge0&s\ge0\,x,y\in\Omega,
\end{matrix*}
\right.
\end{equation}
with $\varepsilon>0$ small enough. This means that the time scale for $w$ is of order $1$, while $n$ relaxes very quickly to equilibrium with time scale $\varepsilon$. Well-posedness and exponential convergence results are also valid for this system. 

Let $n^\varepsilon(t,s,x)$ be the solution of \eqref{eqeps}, we are interested in the asymptotic behavior of $n^\varepsilon$ when $\varepsilon\to0$. In order to do so, consider the formal limit system which corresponds to take $\varepsilon=0$ in \eqref{eqeps}
\begin{equation}
\label{eqeps0}
\left\{
\begin{matrix*}[l]
\partial_s n+p(s,S(t,x))n=0&s>0,x\in\Omega\vspace{0.15cm},\\
N(t,x)\coloneqq n(t,s=0,x)=\int_0^\infty p(s,S(t,x))n\,ds& t>0,x\in\Omega\vspace{0.15cm},\\
S(t,x)=\int w(t,x,y)N(t,y)dy+I(t,x)&t>0,x\in\Omega\vspace{0.15cm},\\
\partial_t w= -w + \gamma G(N(t,x),N(t,y))&t>0,\,x,y\in\Omega\vspace{0.15cm},\\
w(t=0,x,y)=w_0(x,y)& x,y\in\Omega.
\end{matrix*}
\right.
\end{equation}
The question here is to determine if $n^\varepsilon$, the solution of system \eqref{eqeps}, converges to some solution of \eqref{eqeps0} when $\varepsilon$ vanishes. In order to address this question, we first prove that problem \eqref{eqeps0} is well-posed under the weak interconnection regime.
\begin{thm}[Existence for system \eqref{eqeps0}]
\label{exist0}
Consider $g\in \C_b(\Omega)$ and $F$ be the function defined in \eqref{CapF}. Then under the condition $$\max\left\{\|w_0\|_\infty,\gamma\right\}\|F'\|_\infty<1$$
the system \eqref{eqeps0} has a unique solution satisfying $\int_0^\infty n(t,s,x)\,ds=g(x)$ for all $t\ge0$.
\end{thm}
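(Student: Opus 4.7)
The plan is to first use the stationary structure in the $s$-variable to express $n$ and $N$ algebraically in terms of the amplitude $S$, thereby reducing \eqref{eqeps0} to an implicit algebraic equation for $S$ coupled to a linear ODE for $w$. Integrating $\partial_s n+p(s,S(t,x))n=0$ with $n(t,0,x)=N(t,x)$ and enforcing the mass constraint $\int_0^\infty n(t,s,x)\,ds=g(x)$, one obtains
$$n(t,s,x)=g(x)F(S(t,x))\exp\!\Bigl(-\!\int_0^s p(\tau,S(t,x))\,d\tau\Bigr),\qquad N(t,x)=g(x)F(S(t,x)),$$
so that the whole system \eqref{eqeps0} collapses to the couple
$$S(t,x)=\int_\Omega w(t,x,y)\,g(y)F(S(t,y))\,dy+I(t,x),\qquad \partial_t w=-w+\gamma G\bigl(g(x)F(S(t,x)),\,g(y)F(S(t,y))\bigr),$$
with $w(0,\cdot,\cdot)=w_0$.

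The first step will be to solve the $S$-equation at each time $t$ given $w$. The Duhamel representation of the $w$-ODE together with $\|G\|_\infty\le1$ gives the a priori bound $\|w(t)\|_\infty\le A\coloneqq\max\{\|w_0\|_\infty,\gamma\}$ for every $t\ge0$. Fixing any such $w(t,\cdot,\cdot)$ and using $\int_\Omega g=1$ together with the lemma from Section \ref{stationary} on $F$, the map $S\mapsto\int_\Omega w(t,x,y)\,g(y)F(S(y))\,dy+I(t,x)$ is a contraction on $C_b(\Omega)$ with constant $A\|F'\|_\infty<1$. This yields a unique fixed point $S[w](t,\cdot)\in C_b(\Omega)$, and a direct subtraction in the fixed-point identity produces the Lipschitz estimate
$$\|S[w_1](t)-S[w_2](t)\|_\infty\le\frac{\|F\|_\infty}{1-A\|F'\|_\infty}\,\|w_1(t)-w_2(t)\|_\infty.$$

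The second step will be a Picard iteration for $w$ using the Duhamel operator
$$\Psi[w](t,x,y)\coloneqq e^{-t}w_0(x,y)+\gamma\int_0^t e^{-(t-\tau)}G\bigl(g(x)F(S[w](\tau,x)),\,g(y)F(S[w](\tau,y))\bigr)\,d\tau.$$
Since $\|G\|_\infty\le1$, $\Psi$ stabilises the ball $\{\|w(t)\|_\infty\le A\}$. Combining $\|\nabla G\|_\infty\le1$ with the Lipschitz bound on $S[\cdot]$ gives
$$\|\Psi[w_1](t)-\Psi[w_2](t)\|_\infty\le \frac{2\gamma\|g\|_\infty\|F\|_\infty\|F'\|_\infty}{1-A\|F'\|_\infty}\int_0^t e^{-(t-\tau)}\|w_1(\tau)-w_2(\tau)\|_\infty\,d\tau,$$
so $\Psi$ is a contraction on $C([0,T];C_b(\Omega\times\Omega))$ for $T$ small. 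Because the constant is time-uniform, iteration produces a unique global-in-time solution, which by construction satisfies $\int_0^\infty n(t,s,x)\,ds=g(x)$ for all $t\ge0$.

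The main obstacle is the implicit nature of the $S$-equation: in contrast to Section \ref{wellp}, the absence of a $\partial_t n$ term means that at every instant $t$ one must invert the nonlinear algebraic relation between $S(t,\cdot)$ and $w(t,\cdot,\cdot)$ before evolving $w$. The threshold $\max\{\|w_0\|_\infty,\gamma\}\|F'\|_\infty<1$ is precisely the quantitative condition that makes this pointwise-in-time inversion well-defined and Lipschitz in $w$; once this is secured, the fixed-point analysis of the linear ODE for $w$ is routine.
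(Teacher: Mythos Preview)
Your proof is correct and follows essentially the same approach as the paper: reduce the system to the coupled pair $(S,w)$ via the explicit formulas $n(t,s,x)=g(x)F(S(t,x))e^{-\int_0^s p(\tau,S(t,x))\,d\tau}$ and $N=gF(S)$, then solve the implicit $S$-equation by a contraction argument (the paper isolates this as a separate lemma, with the same Lipschitz-in-$w$ estimate you derive), and finally handle the $w$-equation by a Picard iteration on the invariant ball $\{\|w\|_\infty\le A\}$. The only cosmetic difference is that the paper invokes the Cauchy--Lipschitz--Picard theorem directly for the $w$-ODE, whereas you spell out the Duhamel fixed-point map $\Psi$ explicitly; these are the same argument.
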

To prove the result we need the following lemma.
\begin{lem}
	Consider $w\in\C_b([0,\infty)\times\Omega\times\Omega)$ fixed. Then the operator $\mathcal{T}\colon\C_b([0,\infty)\times\Omega)\to\C_b([0,\infty)\times\Omega)$ defined by
	$$\mathcal{T}[S](t,x)=\int w(t,x,y)g(y)F(S(t,y))\,dy+I(t,x),$$
	has a unique fixed point $\bar{S}\in\C_b([0,\infty)$ if $\|w\|_\infty\|F'\|_\infty<1$. Moreover, $\bar{S}$ is a locally-Lipschitz function of $w$.
\end{lem}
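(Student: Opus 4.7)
The plan is to apply Banach's fixed point theorem in $\C_b([0,\infty)\times\Omega)$ equipped with the sup-norm, since this is a complete metric space and $\mathcal{T}$ is an explicit self-map whose contraction property will follow directly from the Lipschitz bound on $F$ established in the previous lemma. The local-Lipschitz dependence on $w$ will then be obtained by the standard device of subtracting the two fixed-point equations associated with two kernels $w_1,w_2$.

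First I would check that $\mathcal{T}$ maps $\C_b([0,\infty)\times\Omega)$ into itself. Boundedness is immediate from
$$|\mathcal{T}[S](t,x)|\le\|w\|_\infty\|F\|_\infty\int_\Omega g(y)\,dy+\|I\|_\infty=\|w\|_\infty\|F\|_\infty+\|I\|_\infty,$$
where the crucial ingredient is the normalization $\int_\Omega g=1$ from \eqref{gx}. Continuity in $(t,x)$ follows from continuity of $w$, $I$ and of $S\mapsto F(S)$ together with dominated convergence, since the integrand is dominated by $\|w\|_\infty\|F\|_\infty g(y)\in L^1(\Omega)$.

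For the contraction, for $S_1,S_2\in\C_b([0,\infty)\times\Omega)$ and any $(t,x)$ I get
$$|\mathcal{T}[S_1]-\mathcal{T}[S_2]|(t,x)\le\int_\Omega|w(t,x,y)|\,g(y)\,|F(S_1(t,y))-F(S_2(t,y))|\,dy\le\|w\|_\infty\|F'\|_\infty\|S_1-S_2\|_\infty,$$
again using $\int g=1$ together with $|F(a)-F(b)|\le\|F'\|_\infty|a-b|$. Under the hypothesis $\|w\|_\infty\|F'\|_\infty<1$, $\mathcal{T}$ is a strict contraction and Banach's theorem gives the unique fixed point $\bar{S}\in\C_b([0,\infty)\times\Omega)$.

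For the local-Lipschitz dependence, I fix two kernels $w_1,w_2$ both satisfying the hypothesis and call $\bar{S}_1,\bar{S}_2$ the corresponding fixed points. Writing $\bar{S}_i=\mathcal{T}_{w_i}[\bar{S}_i]$, subtracting and inserting $\pm\int w_2\,g\,F(\bar{S}_1)\,dy$ gives
$$|\bar{S}_1-\bar{S}_2|(t,x)\le\|F\|_\infty\|w_1-w_2\|_\infty+\|w_2\|_\infty\|F'\|_\infty\|\bar{S}_1-\bar{S}_2\|_\infty,$$
so that
$$\|\bar{S}_1-\bar{S}_2\|_\infty\le\frac{\|F\|_\infty}{1-\|w_2\|_\infty\|F'\|_\infty}\|w_1-w_2\|_\infty.$$
This is the claimed local-Lipschitz property: the constant stays bounded on any ball $\{\|w\|_\infty\le M\}$ with $M\|F'\|_\infty<1$, and blows up only as one approaches the boundary of the admissible region. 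The whole argument is routine; the only delicate point is the book-keeping of constants, since the threshold $\|w\|_\infty\|F'\|_\infty<1$ does not carry the factor $\|g\|_\infty|\Omega|$ that appeared in Section \ref{wellp} and depends crucially on absorbing the $g\,dy$ integral via the normalization \eqref{gx}.
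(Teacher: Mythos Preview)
Your proof is correct and follows essentially the same route as the paper's: a direct contraction estimate $\|\mathcal{T}[S_1]-\mathcal{T}[S_2]\|_\infty\le\|w\|_\infty\|F'\|_\infty\|S_1-S_2\|_\infty$ to invoke Banach's fixed point theorem, followed by the same subtraction argument for the local-Lipschitz dependence on $w$, yielding the identical bound $\|\bar S_1-\bar S_2\|_\infty\le\frac{\|F\|_\infty}{1-\|w_2\|_\infty\|F'\|_\infty}\|w_1-w_2\|_\infty$. You are in fact slightly more explicit than the paper in checking that $\mathcal{T}$ is a self-map and in noting that the normalization $\int_\Omega g=1$ is what absorbs the spatial integral, but this is only added detail, not a different approach.
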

\begin{proof}
	We first notice that $\mathcal{T}$ is a contraction. In fact for $S_1,S_2\in\C_b([0,\infty)\times\Omega)$ we have
	$$\|\mathcal{T}[S_1]-\mathcal{T}[S_2]\|_\infty\le\|w\|_\infty\|F'\|_\infty\|S_1-S_2\|_\infty.$$
	Hence by Picard's theorem there is a unique fixed point $\bar{S}[w]\in\C_b([0,\infty)\times\Omega)$.
	
	Now consider $\bar{S}[w_1],\bar{S}[w_2]$ the respective fixed points associated to $w_1,w_2$. Then we have the following estimate
	$$\|\bar{S}[w_1]-\bar{S}[w_2]\|_\infty\le \|F\|_\infty\|w_1-w_2\|_\infty+\|w_2\|_\infty\|F'\|_\infty\|\bar{S}[w_1]-\bar{S}[w_2]\|_\infty$$
	and hence
	$$\|\bar{S}[w_1]-\bar{S}[w_2]\|_\infty\le\frac{\|F\|_\infty}{1-\|w_2\|_\infty\|F'\|_\infty}\|w_1-w_2\|_\infty$$
	so $\bar{S}$ is a locally Lipschitz function of $w$.
\end{proof}

In this setting, we continue with the proof of theorem \ref{exist0}.

\begin{proof}
	First observe that $n$ satisfies
	$$n(t,s,x)=N(t,x)e^{-\int_0^sp(\tau,S(t,x))\,d\tau}.$$
	and by integrating with respect to $s$, we get the following expression for $N$
	$$N(t,x)=g(x)\left(\int_0^\infty e^{-\int_0^sp(\tau,S(t,x))\,d\tau}\,ds\right)^{-1}=g(x)F(S(t,x)).$$
	Hence the problem is reduced to the following system for $(S,w)$
	\begin{equation}
	\label{limwS}
	\left\{
	\begin{matrix*}[l]
	S(t,x)=\int w(t,x,y)g(y)F(S(t,y))dy+I(t,x)&t>0,x\in\Omega\vspace{0.15cm},\\	
	\partial_t w=-w+\gamma G\left(g(x)F(S(t,x)),g(y)F(S(t,y))\right)&t>0,\,x,y\in\Omega\vspace{0.15cm},\\
	w(t=0,x,y)=w_0(x,y)&x,y\in\Omega.
	\end{matrix*}
	\right.
	\end{equation}
	Since we have a uniform estimate for $w$ in \eqref{bdw}, we conclude that $\bar{S}[w]$ is a Lipschitz function restricted to the set $$U=\left\{ w\in\C_b([0,\infty)\times\Omega\times\Omega)\colon\|w\|_\infty\le\max\{\|w_0\|_\infty,\gamma\right\}\}$$ 
	if $\max\{\|w_0\|_\infty,\gamma\}\|F'\|_\infty<1$. So by applying the Cauchy-Lipschitz-Picard theorem, we conclude that system \eqref{limwS} has a unique solution, defined in some time interval $[0,T]$. Finally, by noting again that $w$ is uniformly bounded as in \eqref{bdw}, we can iterate this argument to get a solution globally defined in time.
\end{proof}

By replicating the proof in theorem \ref{conveq0}, we get for system \eqref{eqeps0} its asymptotic behavior when $t\to\infty$.
\begin{thm}[Long term behavior for system \eqref{eqeps0}]
\label{slowth}	
Assume \eqref{inidata}-\eqref{gx} and that $p\in W^{1,\infty}((0,\infty)\times\R)$ satisfies \eqref{lbp1}. For $\gamma$ and $\|\frac{\partial p}{\partial S}\|_\infty$ small enough, consider $(n^*,N^*,S^*,w^*)$ the corresponding stationary state of \eqref{eq0}. Then there exist $C,\lambda>0$ such that the solution of \eqref{eqeps0} satisfies
\begin{equation}
	\|n(t)-n^*\|_{L^1_{s,x}}+\|w(t)-w^*\|_{L^1_{x,y}}\le Ce^{-\lambda t}\|w_0-w^*\|_{L^1_{x,y}},\:\forall t\ge 0.
\end{equation}
Moreover $\|S(t)-S^*\|_{L^1_x}$ and $\|N(t)-N^*\|_{L^1_x}$ converge exponentially to $0$ when $t\to\infty$.
\end{thm}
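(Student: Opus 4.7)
The simplification here is that in the limit system \eqref{eqeps0} the renewal equation has been reduced to an algebraic relation: $n$ is slaved to $S$ through
$$n(t,s,x) = N(t,x) e^{-\int_0^s p(\tau,S(t,x))\,d\tau}, \qquad N(t,x) = g(x) F(S(t,x)),$$
and $S$ is in turn the unique fixed point of the operator studied in the lemma preceding Theorem \ref{exist0}, so $S = \bar S[w]$ is a locally Lipschitz function of $w$. The only genuine time dynamics is therefore the one carried by $w$, and I would close a single differential inequality for $\|w(t)-w^*\|_{L^1_{x,y}}$, recovering the other convergences as Lipschitz consequences.

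\textbf{Step 1 (differential inequality for $w$).} Subtract the stationary identity $w^* = \gamma G(N^*(x),N^*(y))$ from the $w$-equation of \eqref{eqeps0}. Using $\|\nabla G\|_\infty \le 1$, take absolute values and integrate over $\Omega\times\Omega$ to get
\begin{equation*}
\frac{d}{dt}\iint |w-w^*|\,dx\,dy \;\le\; -\iint |w-w^*|\,dx\,dy \;+\; 2\gamma|\Omega|\int |N-N^*|\,dx.
\end{equation*}

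\textbf{Step 2 (closing in $w$).} Bound $\|N-N^*\|_{L^1_x}$ and $\|S-S^*\|_{L^1_x}$ by $\|w-w^*\|_{L^1_{x,y}}$ using exactly the chain of estimates from Theorem \ref{conveq0}. Since $N=gF(S)$ and $F$ is Lipschitz with $\|F'\|_\infty \lesssim \|\partial p/\partial S\|_\infty$,
\begin{equation*}
\int |N-N^*|\,dx \;\le\; \|g\|_\infty \|F'\|_\infty \int |S-S^*|\,dx.
\end{equation*}
From $S = \int w N\,dy + I$ and the uniform bounds $|w|\le\max\{\|w_0\|_\infty,\gamma\}$, $|N|\le p_\infty\|g\|_\infty$, I get, provided $\beta := \gamma|\Omega|\,\|g\|_\infty\|F'\|_\infty < 1$,
\begin{equation*}
\int |S-S^*|\,dx \;\le\; \frac{p_\infty\|g\|_\infty}{1-\beta}\iint |w-w^*|\,dx\,dy.
\end{equation*}
Feeding this back yields $\|N-N^*\|_{L^1_x}\le C_N \|w-w^*\|_{L^1_{x,y}}$ with $C_N$ proportional to $\|g\|_\infty^2\|F'\|_\infty p_\infty/(1-\beta)$.

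\textbf{Step 3 (Grönwall).} Substituting into the inequality of Step 1 gives
\begin{equation*}
\frac{d}{dt}\|w-w^*\|_{L^1_{x,y}} \;\le\; -\bigl(1-2\gamma|\Omega|C_N\bigr)\|w-w^*\|_{L^1_{x,y}}.
\end{equation*}
For $\gamma$ and $\|\partial p/\partial S\|_\infty$ small enough, the coefficient $1-2\gamma|\Omega|C_N$ is strictly positive, and Grönwall produces the exponential decay of $\|w(t)-w^*\|_{L^1_{x,y}}$ with rate $\lambda>0$, controlled solely by $\|w_0-w^*\|_{L^1_{x,y}}$. The bounds of Step 2 then propagate this exponential decay to $\|N-N^*\|_{L^1_x}$ and $\|S-S^*\|_{L^1_x}$.

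\textbf{Step 4 (convergence of $n$).} Write
\begin{equation*}
n-n^* = (N-N^*) e^{-\int_0^s p(\tau,S)\,d\tau} + N^*\bigl(e^{-\int_0^s p(\tau,S)\,d\tau}-e^{-\int_0^s p(\tau,S^*)\,d\tau}\bigr),
\end{equation*}
use $|e^{-a}-e^{-b}|\le|a-b|$, the lower bound $p\ge p_*$ from \eqref{lbp1}, and $\int_0^\infty s\,e^{-p_* s}\,ds = 1/p_*^2$ to integrate in $s$. The resulting bound $\|n-n^*\|_{L^1_{s,x}} \lesssim \|N-N^*\|_{L^1_x} + \|S-S^*\|_{L^1_x}$ finishes the claim.

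\textbf{Expected main obstacle.} The only delicate point is closing the chain of Step 2 so that the dissipative $-1$ coming from the $w$-equation survives; this amounts to ensuring simultaneously $\beta<1$ and $2\gamma|\Omega|C_N<1$, which is precisely the quantitative meaning of the smallness hypothesis on $\gamma$ and $\|\partial p/\partial S\|_\infty$. Beyond that, the argument is a direct transcription of the Theorem \ref{conveq0} computation, simplified by the absence of a time derivative on $n$.
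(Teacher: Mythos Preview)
Your proposal is correct and is essentially the approach the paper indicates (it gives no separate proof, only the instruction ``by replicating the proof in theorem \ref{conveq0}''). The one stylistic difference is that you exploit the algebraic relation $N=gF(S)$ directly to get $\|N-N^*\|_{L^1_x}\le \|g\|_\infty\|F'\|_\infty\|S-S^*\|_{L^1_x}$, whereas a literal transcription of the Theorem~\ref{conveq0} argument would reach the same place via the splitting \eqref{N-N*} and the entropy trick on $\int_0^\infty(n-n^*)\,ds=0$; your route is cleaner here because the absence of $\partial_t n$ makes the $n$-variable purely slaved, and it yields the same closed differential inequality in $\|w-w^*\|_{L^1_{x,y}}$.
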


Next we prove the convergence of $n^\varepsilon$ for the case of weak interconnection when the firing rate is strictly positive, by means of the entropy method.
\begin{thm}[Convergence for \eqref{eqeps} as $\varepsilon\to0$]
\label{conveps}
Assume \eqref{inidata}-\eqref{gx} with $n_0\in W^{1,1}_{s,x}$ and that $p\in W^{1,\infty}((0,\infty)\times\R)$ satisfies \eqref{lbp1}. For $\max\{\|w_0\|_\infty,\gamma\}$ small enough, let $(n^\varepsilon,N^\varepsilon,S^\varepsilon,w^\varepsilon)$ be the solution of system \eqref{eqeps} and let $(\bar{n},\bar{N},\bar{S},\bar{w})$ be the unique solution of system \eqref{eqeps0}.

Then for all $T>0$ we have $n^\varepsilon\to\bar{n}$ in $L^1((0,T)\times(0,\infty)\times\Omega)$ and $w^\varepsilon\to\bar{w}$ in $L^1((0,T)\times\Omega\times\Omega)$. Moreover $N^\varepsilon\to\bar{N}$ and $S^\varepsilon\to\bar{S}$ in $L^1((0,T)\times\Omega)$.
\end{thm}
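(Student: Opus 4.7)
The natural approach is to repeat the entropy argument of Theorem \ref{conveq0}, but comparing $n^\varepsilon$ directly with $\bar n$ rather than with a stationary state. The difference $m^\varepsilon = n^\varepsilon - \bar n$ satisfies
\begin{equation*}
\varepsilon \partial_t m^\varepsilon + \partial_s m^\varepsilon + p(s, S^\varepsilon) m^\varepsilon = -\varepsilon \partial_t \bar n - \bigl(p(s, S^\varepsilon) - p(s, \bar S)\bigr) \bar n,
\end{equation*}
with boundary datum $m^\varepsilon(t,0,x) = N^\varepsilon(t,x) - \bar N(t,x)$, while $\delta w^\varepsilon := w^\varepsilon - \bar w$ satisfies the same affine ODE as in Theorem \ref{conveq0} driven by $N^\varepsilon - \bar N$. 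The only structural difference with the situation of Theorem \ref{conveq0} is the appearance of the extra source $\varepsilon \partial_t \bar n$; this term carries an $\varepsilon$ factor and so yields only an $O(\varepsilon)$ correction, provided $\|\partial_t \bar n\|_{L^1_{s,x}}$ is uniformly bounded on $[0,T]$.

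\textbf{Step 1: regularity of the limit.} I would first show that $\bar w$ and $\bar S$ are Lipschitz in time on $[0,T]$. From the ODE for $\bar w$ together with the uniform bound \eqref{bdw}, $\partial_t \bar w$ is bounded. Differentiating the fixed-point relation $\bar S(t,x) = \int \bar w\, g\, F(\bar S)\,dy + I$ and using that $S \mapsto \int \bar w\, g\, F'(\bar S)\,S\,dy$ has operator norm less than one under the weak interconnection hypothesis, one solves for $\partial_t \bar S$ as a bounded function. Then $\bar n(t,s,x) = g(x) F(\bar S) \exp(-\int_0^s p(\tau, \bar S)\,d\tau)$ is $C^1$ in $t$ and $\|\partial_t \bar n\|_{L^1_{s,x}}$ is uniformly bounded on $[0,T]$. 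The assumption $n_0 \in W^{1,1}_{s,x}$ guarantees the analogous regularity of $n^\varepsilon$ needed to justify the subsequent computation.

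\textbf{Step 2: coupled differential inequalities.} Following the computation of Theorem \ref{conveq0} verbatim, multiply the equation for $m^\varepsilon$ by $\mathrm{sgn}(m^\varepsilon)$ and integrate in $s$ and $x$, and argue in the same fashion in $L^1_{x,y}$ for $\delta w^\varepsilon$. The estimates \eqref{N-N*}--\eqref{S-S*} apply line by line with $n^*, S^*, w^*$ replaced by $\bar n, \bar S, \bar w$, and the cancellation $\int_0^\infty m^\varepsilon\,ds = 0$ still holds. Setting $U(t) := \|m^\varepsilon(t)\|_{L^1_{s,x}}$ and $V(t) := \|\delta w^\varepsilon(t)\|_{L^1_{x,y}}$, this produces a coupled system of the form
\begin{align*}
\varepsilon U'(t) &\le -(p_* - C_1) U(t) + C_2 V(t) + \varepsilon K, \\
V'(t) &\le -V(t) + C_3 U(t),
\end{align*}
with $K = \|\partial_t \bar n\|_{L^\infty_t L^1_{s,x}}$ and constants $C_1, C_2, C_3$ that are small under the weak interconnection regime.

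\textbf{Step 3: conclusion and initial layer.} The key observation is that $V(0) = \|w_0 - \bar w(0)\|_{L^1_{x,y}} = 0$ since $\bar w$ shares the same initial data, while $U(0)$ is generically of order one, so there is an initial layer of width $\varepsilon$. Duhamel's formula on the first inequality gives
\begin{equation*}
U(t) \le U(0)\, e^{-(p_* - C_1) t/\varepsilon} + \frac{\varepsilon K}{p_* - C_1} + \frac{C_2}{p_* - C_1} \sup_{[0,t]} V,
\end{equation*}
so integrating in $t \in [0,T]$ yields $\int_0^T U \le \varepsilon U(0)/(p_* - C_1) + O(\varepsilon T) + \bigl(C_2 T/(p_* - C_1)\bigr) \sup V$; combining with $\sup_{[0,T]} V \le C_3 \int_0^T U$ (Duhamel on the $V$ inequality with $V(0)=0$) and closing the loop under $C_2 C_3 T/(p_* - C_1) < 1$ gives $\int_0^T U = O(\varepsilon)$ and $\sup V = O(\varepsilon)$. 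The $L^1$-in-time convergence of $N^\varepsilon$ and $S^\varepsilon$ follows from the same intermediate estimates as in Step 2. The main obstacle is honest bookkeeping of the initial layer: the fast term $U(0) e^{-\mu t/\varepsilon}$ does not vanish uniformly in $t$, only in $L^1_t$, which is precisely why the statement is phrased in $L^1((0,T)\times\cdots)$ rather than uniformly in $t$.
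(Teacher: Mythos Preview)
Your approach is correct and genuinely different from the paper's. The paper does \emph{not} estimate $\partial_t\bar n$; instead it first proves a uniform-in-$\varepsilon$ bound on $\|\partial_t n^\varepsilon\|_{L^1((0,T)\times(0,\infty)\times\Omega)}$ by differentiating \eqref{eqeps} in $t$, applying the entropy trick to $u=\partial_t n^\varepsilon$ (using $\int u\,ds=0$), and controlling the initial datum via $\varepsilon\iint|u(0)|\le \|n_0\|_{W^{1,1}}+p_\infty$. It then writes the difference equation with \emph{no} time derivative on the left,
\[
\partial_s(n^\varepsilon-\bar n)+p(s,S^\varepsilon)(n^\varepsilon-\bar n)=-\varepsilon\,\partial_t n^\varepsilon-(p(s,S^\varepsilon)-p(s,\bar S))\bar n,
\]
integrates in $(t,s,x)$, and closes via the entropy trick to obtain the Poincar\'e-type bound $(p_*-2\beta_3)\int_0^T\!\iint|n^\varepsilon-\bar n|\le \varepsilon\int_0^T\!\iint|\partial_t n^\varepsilon|$. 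Thus the hypothesis $n_0\in W^{1,1}_{s,x}$ is used \emph{essentially} in the paper, to control $\partial_t n^\varepsilon$ at $t=0$. In your route the extra source is $\varepsilon\,\partial_t\bar n$, which is bounded directly from the explicit profile of $\bar n$; the $W^{1,1}$ assumption on $n_0$ is then only a technical regularity cushion, not a structural ingredient. Your approach trades that a~priori estimate for an honest initial-layer analysis, which is a fair exchange and arguably more transparent about why only $L^1_t$ convergence holds.

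One point to tighten in Step~3: the closing condition $C_2C_3T/(p_*-C_1)<1$ makes the smallness of the interconnection depend on $T$, whereas the statement fixes the smallness first and then claims the result for all $T>0$. This is easily repaired: instead of bounding $\int_0^T V$ by $T\sup V$, integrate the $V$-inequality directly to get $(1-C_4)\int_0^T V\le C_3\int_0^T U$ (using $V(0)=0$), and integrate the $U$-inequality in $t$ rather than taking Duhamel, obtaining $\mu\int_0^T U\le \varepsilon U(0)+C_2\int_0^T V+\varepsilon KT$. Combining gives a $T$-independent closing condition $C_2C_3/\bigl(\mu(1-C_4)\bigr)<1$ and the bound $\int_0^T U=O(\varepsilon(1+T))$.
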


\begin{proof}
	Let $(n^\varepsilon,N^\varepsilon,S^\varepsilon,w^\varepsilon)$ be the solution of system \eqref{eqeps}. We start by reminding the following uniform estimates
	\begin{equation}
	\label{unifnsw}
		\|N^\varepsilon(t)\|_\infty\le p_{\infty}\|g\|_\infty,\quad
		\|w^\varepsilon\|_\infty\le\max\{\|w_0\|_\infty,\gamma\},\quad\forall t\ge0,\,\varepsilon>0,
	\end{equation}.
	
	The first step is to estimate $u=\partial_tn^\varepsilon$, which satisfies the following equation
	$$\varepsilon\partial_t u+\partial_s u+p(s,S^\varepsilon)u+\frac{\partial p}{\partial S}(s,S^\varepsilon)\,\partial_tS^\varepsilon \,n^\varepsilon=0,$$
	thus we have the following inequality
	$$\varepsilon\partial_t|u|+\partial_s |u|+p(s,S^\varepsilon)|u|\le\left\|\frac{\partial p}{\partial S}\right\|_\infty|\partial_tS^\varepsilon|\,n^\varepsilon.$$
	By integrating with respect to all variables, we get
	\begin{equation}
	\label{intpu}
	\begin{split}
	\int_0^T\hspace{-0.2cm}\iint_0^\infty p(s,S^\varepsilon)|u|\,ds\,dx\,dt&	\le\varepsilon\iint_0^\infty|u|(0,s,x)\,ds\,dx+\int_0^T\hspace{-0.2cm}\int |\partial_tN^\varepsilon|(t,x)\,dx\,dt\\
	&\quad+\left\|\tfrac{\partial p}{\partial S}\right\|_\infty\int_0^T\|\partial_tS^\varepsilon(t,\cdot)\|_\infty\,dt.
	\end{split}	
	\end{equation}
	Thus we have to estimate each term in the right-hand side. For the first it readily follows that
	\begin{equation}
	\label{nt0}
		\varepsilon\iint_0^\infty|u|(0,s,x)\,ds\,dx\le\iint_0^\infty|\partial_sn_0|\,ds\,dx+\iint_0^\infty p(s,S^\varepsilon(0,x))n_0\,ds\,dx\le \|n_0\|_{W^{1,1}_{s,x}}+p_{\infty}.
	\end{equation}
	Next, for $\partial_tN^\varepsilon$ we have
	$$\partial_tN^\varepsilon(t,x)=\partial_tS^\varepsilon(t,x)\int_0^\infty\frac{\partial p}{\partial S}(s,S^\varepsilon(t,x))\,n^ \varepsilon(t,s,x)\,ds+\int_0^\infty p(s,S^\varepsilon(t,x))\,\partial_tn^\varepsilon(t,s,x)\,ds.$$
	Thus for the second term we get
	\begin{equation}
	\label{dtN}
		\int_0^T\hspace{-0.2cm}\int |\partial_tN^\varepsilon|(t,x)\,dx\,dt\le\left\|\tfrac{\partial p}{\partial S}\right\|_\infty\int_0^T\|\partial_tS^\varepsilon(t,\cdot)\|_\infty\,dt+\int_0^T\hspace{-0.2cm}\int\left|\int_0^\infty p(s,S^\varepsilon)u\,ds\right|dx\,dt.
	\end{equation}
	On the other hand, for $\partial_tS^\varepsilon$ we get
	$$\partial_t S^\varepsilon(t,x)=\int\partial_tw^\varepsilon(t,x,y) N^\varepsilon(t,y)\,dy+\int w^\varepsilon(t,x,y) \partial_t N^\varepsilon(t,y)\,dy+\partial_t I(t,x).$$
	Let $A\coloneqq\max\{\|w_0\|_\infty,\gamma\}$, by using the uniform estimates in \eqref{unifnsw} we obtain
	\begin{equation}
	\label{dtS}
	\begin{split}
	\int_0^T\|\partial_t S^\varepsilon(t,\cdot)\|_\infty\,dt&\le \|\partial_t w^\varepsilon\|_\infty\int_0^T\hspace{-0.2cm}\int N^\varepsilon\,dy\,dt+\|w^\varepsilon\|_\infty\int_0^T\hspace{-0.2cm}\int|\partial_t N^\varepsilon|\,dy\,dt+\|\partial_tI\|_\infty T\\
	&\le A\left(2p_{\infty}T+ \int_0^T\hspace{-0.2cm}\int|\partial_t N^\varepsilon|\,dy\,dt\right)+\|\partial_t I\|_\infty T.
	\end{split}
	\end{equation}
	Let $\beta_1\coloneqq\|\frac{\partial p}{\partial S}\|_\infty\max\{\|w_0\|_\infty,\gamma\}<1$. Hence from \eqref{dtN} we conclude
	\begin{equation}
	\int_0^T\|\partial_t S^\varepsilon(t,\cdot)\|_\infty\,dt\le\frac{1}{1-\beta_1}\left(2A p_{\infty}T+\|\partial_t I\|_\infty T+Ap_{\infty}\int_0^T\hspace{-0.2cm}\iint_0^\infty |u|\,ds\,dx\,dt\right).
	\end{equation}
	Therefore we can deduce from \eqref{intpu} the following estimate
	\begin{equation}
	\begin{split}
	\int_0^T\hspace{-0.2cm}\iint_0^\infty p(s,S^\varepsilon)|u|\,ds\,dx\,dt&\le\|n_0\|_{W^{1,1}_{s,x}}+p_{\infty}+\frac{2\|\frac{\partial p}{\partial S}\|_\infty T}{1-\beta_1}\left(2A p_{\infty}+\|\partial_t I\|_\infty\right)\\
	&\quad+\frac{2\beta_1}{1-\beta_1}p_{\infty}\int_0^T\hspace{-0.2cm}\iint_0^\infty |u|\,ds\,dx\,dt+\int_0^T\hspace{-0.2cm}\int\left|\int_0^\infty p(s,S^\varepsilon)u\,ds\right|dx\,dt.
	\end{split}
	\end{equation}
	At this stage we can use again the entropy trick from \cite{michel2005general,perthame2006transport}. Since $\int_0^\infty u\,ds=0$ and $p\ge p_{*}$, we have the following inequality
	$$\int_0^T\hspace{-0.2cm}\int\left|\int_0^\infty p(s,S^\varepsilon)u\,ds\right|dx\,dt=\int_0^T\hspace{-0.2cm}\int\left|\int_0^\infty (p(s,S^\varepsilon)-p_{*})u\,ds\right|dx\,dt\le\int_0^T\hspace{-0.2cm}\int\hspace{-0.2cm}\int_0^\infty(p(s,S^\varepsilon)-p_{*})|u|\,ds\,dx\,dt.$$
	As $\beta_1$ is small enough, we conclude the $L^1$ norm of $u=\partial_tn^\varepsilon$ is uniformly bounded in $\varepsilon$.
	\begin{equation}
	\begin{split}
	\left(p_{*}-\tfrac{2\beta_1}{1-\beta_1}p_{\infty}\right)\int_0^T\hspace{-0.2cm}\iint_0^\infty |\partial_tn^\varepsilon|\,ds\,dx\,dt&\le\frac{2\|\frac{\partial p}{\partial S}\|_\infty T}{1-\beta_1}\left(2Ap_{\infty}+\|\partial_t I\|_\infty\right)\\
	&\quad+\|n_0\|_{W^{1,1}_{s,x}}+p_{\infty}.
	\end{split}
	\end{equation}
	The next step is to estimate $n^\varepsilon-\bar{n}$, by using a similar argument. Let $\bar{N},\bar{S}$ and $\bar{w}$ be the terms associated to $\bar{n}$ in the system \eqref{eqeps0}, so that we have
	$$\partial_s(n^\varepsilon-\bar{n})+p(s,S^\varepsilon)(n^\varepsilon-\bar{n})=-\varepsilon\partial_tn^\varepsilon-(p(s,S^\varepsilon)-p(s,\bar{S}))\bar{n}.$$
	Hence we have the following inequality
	$$\partial_s|n^\varepsilon-\bar{n}|+p(s,S^\varepsilon)|n^\varepsilon-\bar{n}|\le\varepsilon |\partial_tn^\varepsilon|+\left\|\tfrac{\partial p}{\partial S}\right\|_\infty|S^\varepsilon-\bar{S}|\,\bar{n}.$$
	By integrating with respect to all variables we get
	\begin{equation}
	\begin{split}
	\label{n-n0}
	\int_0^T\hspace{-0.2cm}\iint_0^\infty p(s,S^\varepsilon)|n^\varepsilon-\bar{n}|\,ds\,dx\,dt&\le\varepsilon\int_0^T\hspace{-0.2cm}\iint_0^\infty|\partial_tn^\varepsilon|\,ds\,dx\,dt+\int_0^T\hspace{-0.20cm}\int|N^\varepsilon-\bar{N}|\,dx\,dt\\
	&\quad+\|g\|_\infty \left\|\tfrac{\partial p}{\partial S}\right\|_\infty \int_0^T\hspace{-0.20cm}\int|S^\varepsilon-\bar{S}|\,dx\,dt.
	\end{split}	
	\end{equation}
	So we have to estimate the respective terms involving $N$ and $S$. For $N^\varepsilon-\bar{N}$ we have
	\begin{equation}
	\label{N-N0}
	\int_0^T\hspace{-0.2cm}\int|N^\varepsilon-\bar{N}|\,dx\,dt\le\|g\|_\infty \left\|\tfrac{\partial p}{\partial S}\right\|_\infty\int_0^T\hspace{-0.2cm}\int|S^\varepsilon-\bar{S}|\,dx\,dt+\int_0^T\hspace{-0.2cm}\int\left|\int_0^\infty p(s,S^\varepsilon)(n^\varepsilon-\bar{n})\,ds\right|dx\,dt.
	\end{equation}
	In order to  estimate $S^\varepsilon-\bar{S}$, we need to estimate $w^\varepsilon-\bar{w}$ first. By using formula \eqref{solw} we obtain
	\begin{equation*}
	\begin{split}
		\int_0^T\hspace{-0.2cm}\iint|w-\bar{w}|\,dx\,dy\,dt&\le2\gamma|\Omega|\,\int_0^T\int_0^te^{-(t-\tau)}\|N^\varepsilon-\bar{N}\|_{L^1_x}(\tau)\,d\tau\,dt\\
		&\le 2\gamma|\Omega|\,\int_0^T\int_\tau^te^{-(t-\tau)}\|N^\varepsilon-\bar{N}\|_{L^1_x}(\tau)\,dt\,d\tau\\
		&\le 2\gamma|\Omega|\,\int_0^T\|N^\varepsilon-\bar{N}\|_{L^1_x}(\tau)\left(\int_\tau^Te^{-(t-\tau)}\,dt\right)d\tau,
	\end{split}
	\end{equation*}
	so we conclude the following estimate
	\begin{equation}
	\label{w-w0}
	\int_0^T\hspace{-0.2cm}\iint|w-\bar{w}|\,dx\,dy\,dt\le2\gamma|\Omega|\,\int_0^T\hspace{-0.2cm}\int|N^\varepsilon-\bar{N}|\,dx\,dt.
	\end{equation}
	Thus, for $S^\varepsilon-\bar{S}$ we get
	\begin{equation*}
	\begin{split}
	\int_0^T\hspace{-0.2cm}\int|S^\varepsilon-\bar{S}|\,dx\,dt&\le\int_0^T\hspace{-0.2cm}\iint|w-\bar{w}| N^\varepsilon(t,y)\,dx\,dy\,dt+|\Omega|\,\|\bar{w}\|_\infty\int_0^T\hspace{-0.2cm}\int|N^\varepsilon-\bar{N}|\,dy\,dt\\
	&\le \left(2\gamma|\Omega|  p_{\infty}\|g\|_\infty+|\Omega|A\right)\int_0^T\hspace{-0.2cm}\int|N^\varepsilon-\bar{N}|\,dx\,dt.
	\end{split}
	\end{equation*}
	Let $\beta_2\coloneqq2\gamma|\Omega|p_{\infty}\|g\|_\infty+|\Omega|A$. If $\beta_2\|g\|_\infty \left\|\tfrac{\partial p}{\partial S}\right\|_\infty<1$, from \eqref{N-N0} we obtain
	\begin{equation}
	\label{S-S0}
	\int_0^T\hspace{-0.2cm}\int|S^\varepsilon-\bar{S}|\,dx\,dt\le\frac{\beta_2 p_\infty}{1-\beta_2\|g\|_\infty \left\|\tfrac{\partial p}{\partial S}\right\|_\infty}\int_0^T\hspace{-0.2cm}\iint_0^\infty|n^\varepsilon-\bar{n}|\,ds\,dx\,dt.
	\end{equation}
	Let $\beta_3\coloneqq\frac{\beta_2 p_\infty}{1-\beta_2\|g\|_\infty \left\|\tfrac{\partial p}{\partial S}\right\|_\infty}$, then from \eqref{n-n0} we deduce the following inequality
	\begin{equation}
	\begin{split}
	\int_0^T\hspace{-0.2cm}\iint_0^\infty p(s,S^\varepsilon)|n^\varepsilon-\bar{n}|\,ds\,dy\,dt&\le\varepsilon\int_0^T\hspace{-0.2cm}\iint_0^\infty|\partial_tn^\varepsilon|\,ds\,dy\,dt+2\beta_3\int_0^T\hspace{-0.2cm}\iint_0^\infty|n^\varepsilon-\bar{n}|\,ds\,dy\,dt\\
	&\quad+\int_0^T\hspace{-0.2cm}\int\left|\int_0^\infty p(s,S^\varepsilon)(n^\varepsilon-\bar{n})\,ds\right|dy\,dt.
	\end{split}
	\end{equation}
	Since $\int_0^\infty(n^\varepsilon-\bar{n})\,ds=0$ and $p\ge p_{*}$, we have the following inequality
	\begin{equation*}
	\begin{split}
		\int_0^T\hspace{-0.2cm}\int\left|\int_0^\infty p(s,S^\varepsilon)(n^\varepsilon-\bar{n})\,ds\right|dy\,dt&=\int_0^T\hspace{-0.2cm}\int\left|\int_0^\infty (p(s,S^\varepsilon)-p_{*})(n^\varepsilon-\bar{n})\,ds\right|dy\,dt\\
		&\le\int_0^T\hspace{-0.2cm}\int\int_0^\infty (p(s,S^\varepsilon)-p_{*})|n^\varepsilon-\bar{n}|\,ds\,dy\,dt.
	\end{split}
	\end{equation*}
	As $\gamma$ is small enough, we finally conclude the following Poincaré-like estimate for $n^\varepsilon-\bar{n}$
	\begin{equation}
	\left(p_{*}-2\beta_3\right)\int_0^T\hspace{-0.2cm}\iint_0^\infty |n^\varepsilon-\bar{n}|\,ds\,dy\,dt\le\varepsilon\int_0^T\hspace{-0.2cm}\iint_0^\infty|\partial_tn^\varepsilon|\,ds\,dy\,dt.
	\end{equation}
	And we obtain the result by taking $\varepsilon\to0$, since the $L^1$ norm of $\partial_tn^\varepsilon$ is uniformly bounded in $\varepsilon$. The convergence of $N,S$ and $w$ is straightforward from estimates \eqref{N-N0}, \eqref{w-w0} and \eqref{S-S0}.
\end{proof}

\begin{rmk}
	For a firing rate $p$ satisfying \eqref{lbp2} is not evident to apply Doeblin's theory to deduce theorem \ref{conveps}. Indeed, for a fixed $S\in\C_b(\Omega)$, consider $P^\varepsilon_t$ the semi-group defined by the linear problem
	\begin{equation}
	\left\{
	\begin{matrix*}[l]
	\varepsilon\partial_t n+\partial_s n+p(s,S(x))n=0&t>0,s>0,x\in\Omega\vspace{0.15cm},\\
	N(t,x)\coloneqq n(t,s=0,x)=\int_0^\infty p(s,S(x))n\,ds& t>0,x\in\Omega\vspace{0.15cm},\\
	n(t=0,s,x)=n_0(s,x)&s\ge0,x\in\Omega,
	\end{matrix*}
	\right.
	\end{equation}
	so that by replicating the proof of theorem \ref{DoeblinConv} we can prove the following lower bound
	$$P^\varepsilon_{t_0}n_0\ge \varepsilon p_*e^{-2p_{\infty}s_*}\,\mathds{1}_{[0,s_*]}(s)\,g(x),\quad t_0=2\varepsilon s_*.$$
	And we lose Doeblin's condition as $\varepsilon$ vanishes. 
\end{rmk}

\section{Numerical results}
\label{numerical}
\subsection{Elapsed time dynamics}
We present numerical simulations of the system \eqref{eq0} in order to observe the dependence on parameters like connectivity $\gamma$ and the input $I$. For these simulations the domain for position $x$ is $\Omega=(0,1)$ and the firing rate is given by $p=\mathds{1}_{\{ s>S\}}$. We compute numerical solutions with a standard upwind scheme.

We focus in displaying the activity $N(t,x)$ and the amplitude $S(t,x)$ since these two elements determine the general behavior of system \eqref{eq0}. We explore a spatially-homogeneous case and an inhomogeneous one, both with a different learning rule for $w$. In every example the initial connectivity kernel is given by $$w_0(x,y)=10\exp\left(-10(x-y)^2\right).$$

\subsubsection{Spatially-homogeneous input}
We start with some examples when the external input $I$ is constant and positive. For this sub-section the initial probability density is given by $n_0(s,x)=(x+1)e^{-s(x+1)}$, so that $g\equiv1$. Moreover, we consider a learning rule of Hebbian type with the evolution of the kernel given by
$$\partial_t w=-w+\gamma N(t,x)N(t,y).$$
In this particular example there exists a unique steady state determined, through the formulas in \eqref{solest}, by a unique amplitude of stimulation $S^*$, which is constant. This is given by the unique positive solution of the equation
$$S=\frac{\gamma}{(1+S)^3}+I.$$

\begin{figure}[ht!]
	\centering
	\caption{Case $\gamma=1$ and $I=1$.}
	\label{g1i1c}
	\begin{subfigure}{0.33\textwidth}
		\includegraphics[width=\textwidth]{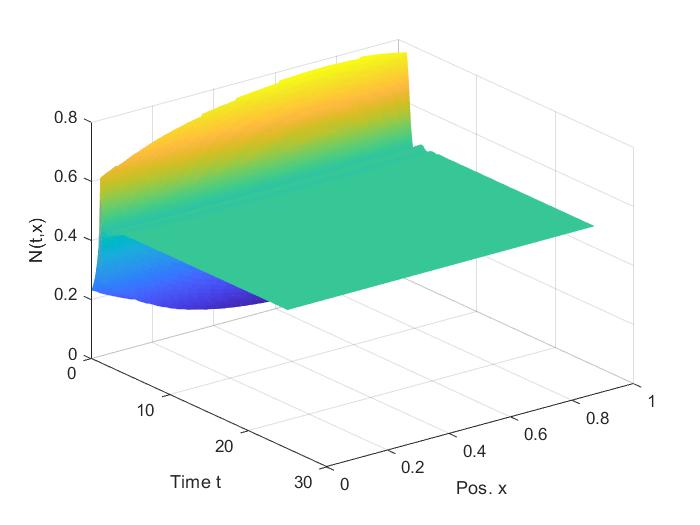}
		\caption{Activity $N(t,x)$.}
	\end{subfigure}  
	\begin{subfigure}{0.33\textwidth}
		\includegraphics[width=\textwidth]{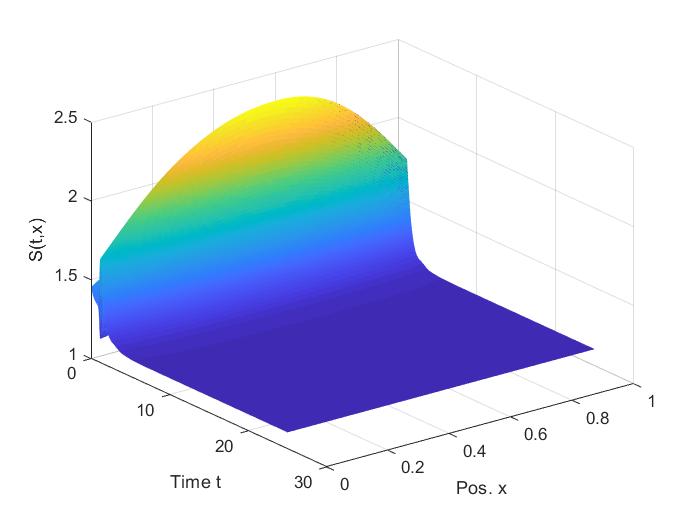}
		\caption{Amplitude of stimulation $S(t,x)$.}
	\end{subfigure}
	\begin{subfigure}{0.33\textwidth}
		\includegraphics[width=\linewidth]{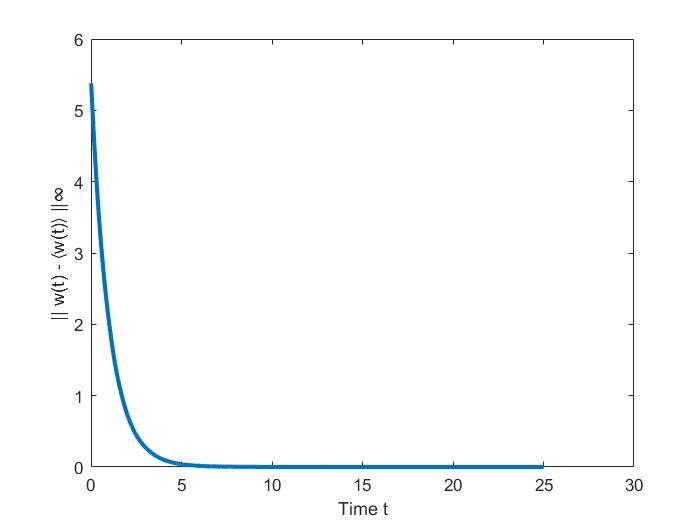}
		\caption{Variation of $\|w(t)-\langle w(t)\rangle\|_\infty$.}
		\label{wg1i1c}
	\end{subfigure}
\end{figure}

In figure \ref{g1i1c} we observe that for $\gamma=1$ and $I=1$ the activity $N$ and the amplitude $S$ stabilize very fast in time and become spatially-homogeneous, this means that the numerical solution $n$ of the system \eqref{eq0} converges to the equilibrium which is independent of variable $x$. Moreover, we observe \ref{wg1i1c} that $\|w(t)-\langle w(t)\rangle\|_\infty$, with $\langle w \rangle\coloneqq|\Omega|^{-2}\iint w\,dx\,dy$, decreases to $0$ in time so the numerical connectivity kernel $w$ is converging to a constant. We essentially observe the behavior of theorem \ref{conveq1}.

\begin{figure}[ht!]
	\centering
	\caption{Case $\gamma=15$ and $I=1$.}
	\label{g15i1c}
	\begin{subfigure}{0.33\textwidth}
		\includegraphics[width=\textwidth]{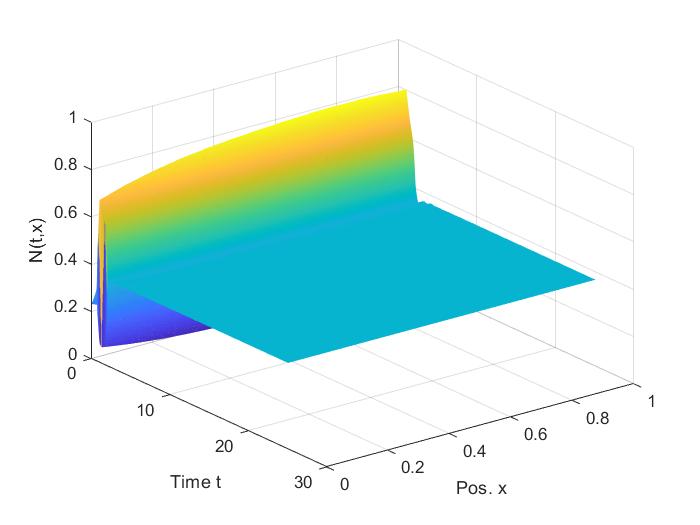}
		\caption{Activity $N(t,x)$.}
	\end{subfigure}  
	\begin{subfigure}{0.33\textwidth}
		\includegraphics[width=\textwidth]{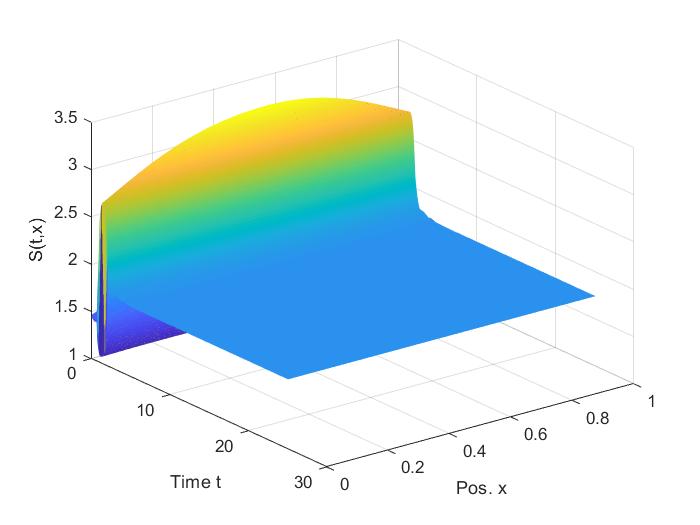}
		\caption{Amplitude of stimulation $S(t,x)$.}
	\end{subfigure}
	\begin{subfigure}{0.33\textwidth}
		\includegraphics[width=\linewidth]{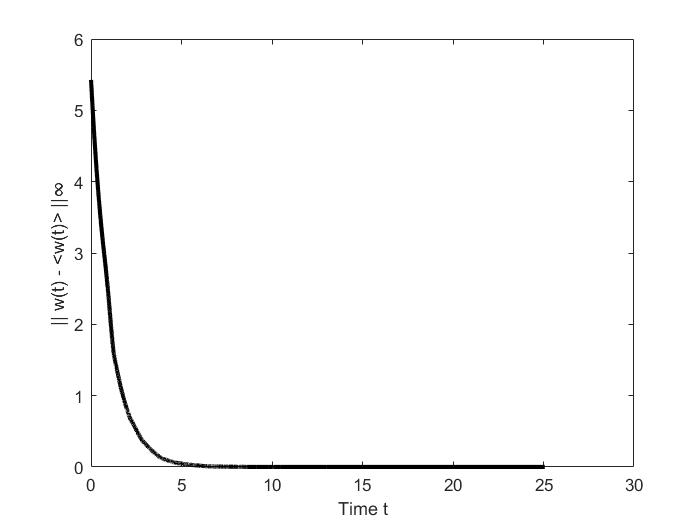}
		\caption{Variation of $\|w(t)-\langle w(t)\rangle\|_\infty$.}
		\label{wg15i1c}
	\end{subfigure}
\end{figure}

If we increase the value of  to $\gamma=15$, we observe in figure \ref{g15i1c} that $N$ and $S$ converge also converge to a steady-state and they become spatially-homogeneous. We observe in figure \ref{wg15i1c} that $\|w(t)-\langle w(t)\rangle\|_\infty$ decreases to $0$ with time, so the connectivity kernel $w$ is converging to a spatially-homogeneous pattern as well.

If we take $\gamma=35$ and also increase the value of input $I$, the numerical solution exhibits again convergence towards equilibrium when the time is large enough. Like the previous cases, the activity $N$ and the amplitude $S$ become spatially-homogeneous in figure \ref{g35i5c}. For the connectivity kernel we have that $\|w(t)-\langle w(t)\rangle\|_\infty$ decreases to $0$ in time as we observe in figure \ref{wg35i5c}, so the numerical connectivity $w$ is converging to a constant. Moreover, this is also compatible with the large connectivity case studied in the article of Pakdaman et al. \cite{PPD}.

\begin{figure}[ht!]
	\centering
	\caption{Case $\gamma=35$ and $I=5$.}
	\label{g35i5c}
	\begin{subfigure}{0.33\textwidth}
		\includegraphics[width=\textwidth]{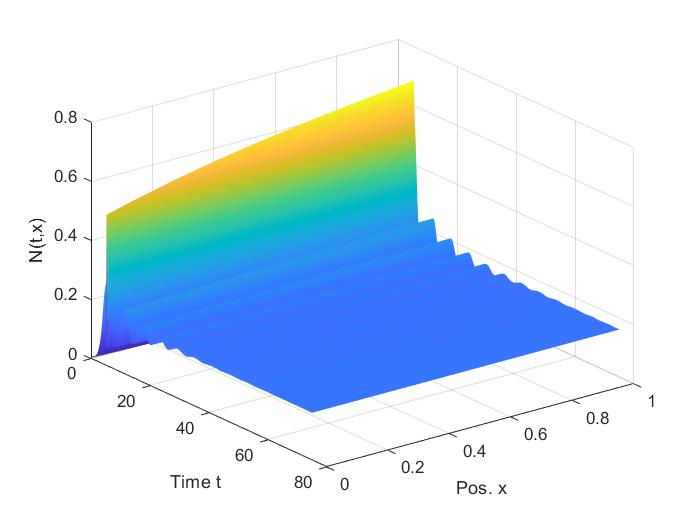}
		\caption{Activity $N(t,x)$.}
	\end{subfigure}  
	\begin{subfigure}{0.33\textwidth}
		\includegraphics[width=\textwidth]{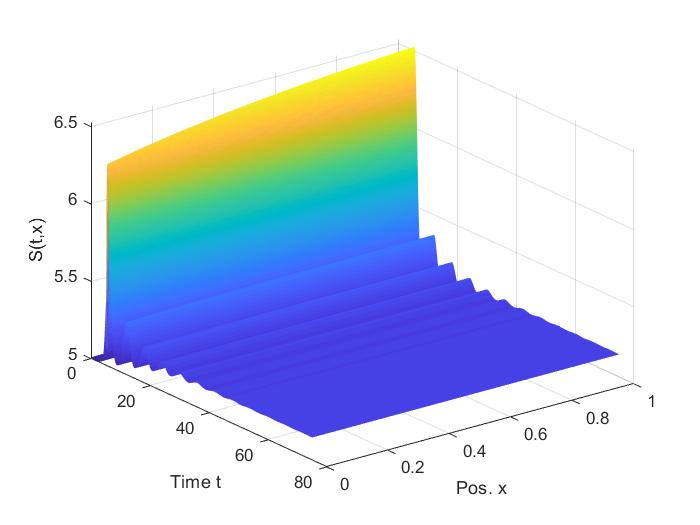}
		\caption{Amplitude of stimulation $S(t,x)$.}
	\end{subfigure}
	\begin{subfigure}{0.33\textwidth}
		\includegraphics[width=\linewidth]{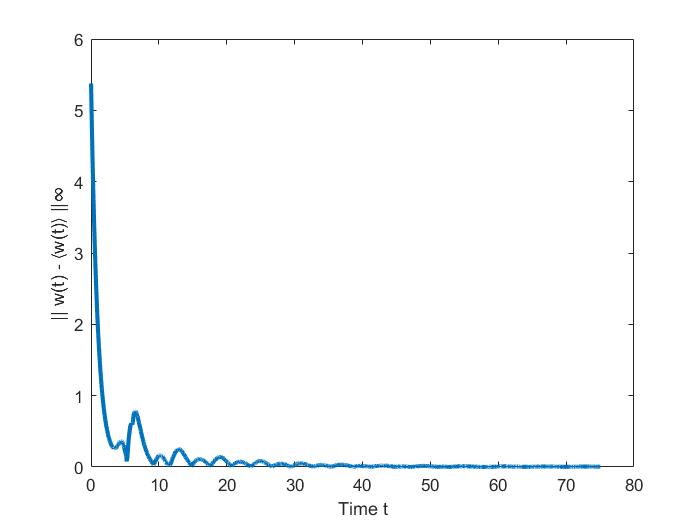}
		\caption{Variation of $\|w(t)-\langle w(t)\rangle\|_\infty$.}
		\label{wg35i5c}
	\end{subfigure}
\end{figure}

More generally, we can conjecture that when $g$ and the input $I$ are constant, then $N,S$ and $w$ lose its spatial dependence as time passes.

\subsubsection{A spatially-inhomogeneous example}
\subsubsection{Spatially-inhomogeneous input}
Now we present an example with a non-constant input to see the activity and the connectivity kernel depending strongly on position. For this subsection the initial probability density is given by $n_0(s,x)=\frac{\exp\left(-s-(x-0.5)^2\right)}{\int_0^1 \exp(-(z-0.5)^2)\,dz}$. We consider a learning rule with the evolution of the kernel given by
$$\partial_t w=-w+\gamma\frac{\exp\left(-(N(t,x)-N(t,y))^2\right)}{1+\exp\left(-2N(t,x)N(t,y)+2\right)}.$$

Consider first $I(x)=\sin^2(2\pi x)$, so for $\gamma=1$ we observe in figure \ref{g1i1v} that both $N$ and $S$ converge in time to a stationary state. Moreover in figure \ref{wg1i1v}, we observe that the connectivity kernel converges to a particular pattern that exhibits a symmetric behavior in spatial variable. Like the corresponding spatially-homogeneous example of figure \ref{g1i1c}, we observe again the behavior of theorem \ref{conveq1}. 
\begin{figure}[ht!]
	\centering
	\caption{Case $\gamma=1$ and $I=\sin^2(2\pi x)$.}
	\label{g1i1v}
	\begin{subfigure}{0.33\textwidth}
		\includegraphics[width=\textwidth]{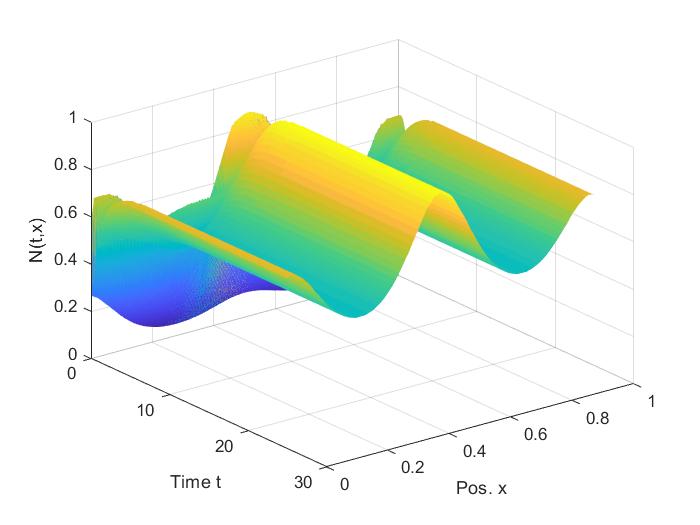}
		\caption{Activity $N(t,x)$.}
	\end{subfigure}  
	\begin{subfigure}{0.33\textwidth}
		\includegraphics[width=\textwidth]{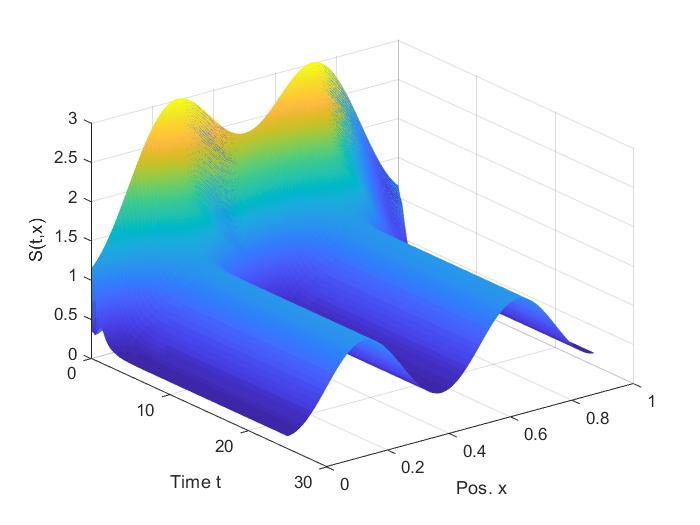}
		\caption{Amplitude of stimulation $S(t,x)$.}
	\end{subfigure}
	\begin{subfigure}{0.33\textwidth}
		\includegraphics[width=\textwidth]{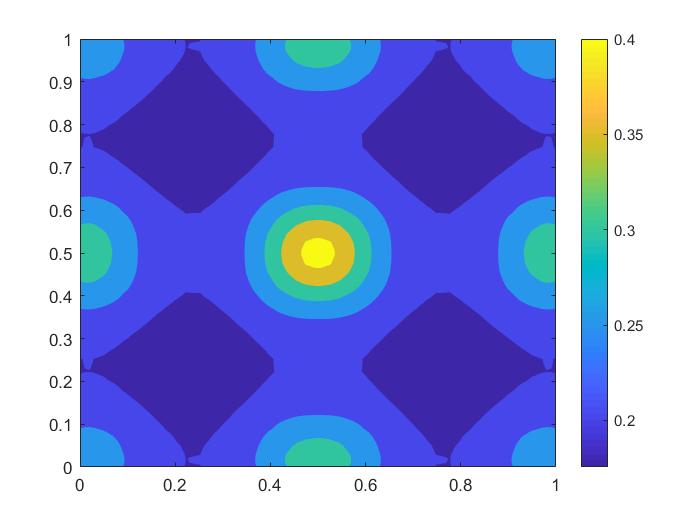}
		\caption{Connectivity $w(t,x,y)$ at $t=25$.}
		\label{wg1i1v}
	\end{subfigure}
\end{figure}

\begin{figure}[ht!]
	\centering
	\caption{Case $\gamma=10$ and $I=\sin^2(2\pi x)$.}
	\label{g10i1v}
	\begin{subfigure}{0.33\textwidth}
		\includegraphics[width=\textwidth]{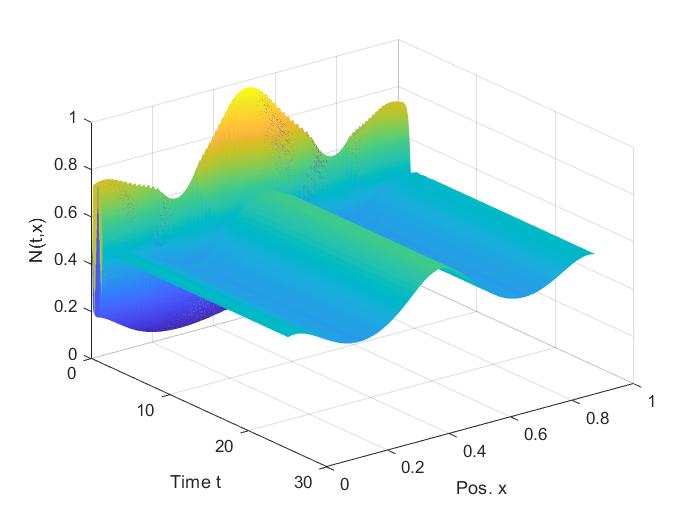}
		\caption{Activity $N(t,x)$.}
	\end{subfigure}  
	\begin{subfigure}{0.33\textwidth}
		\includegraphics[width=\textwidth]{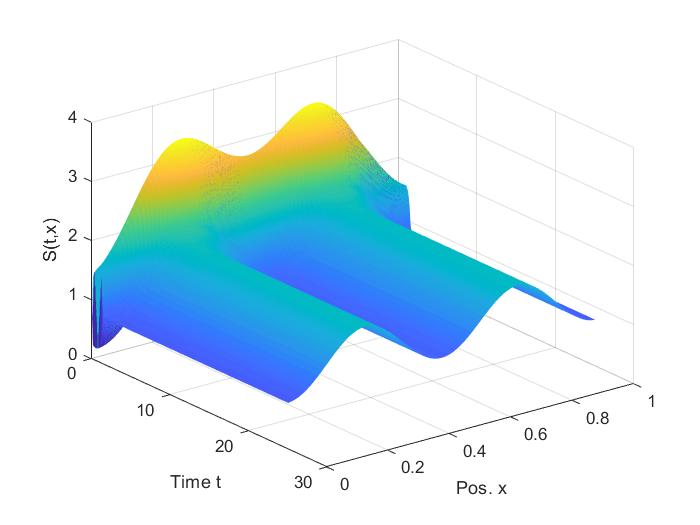}
		\caption{Amplitude of stimulation $S(t,x)$.}
	\end{subfigure}
	\begin{subfigure}{0.33\textwidth}
		\includegraphics[width=\textwidth]{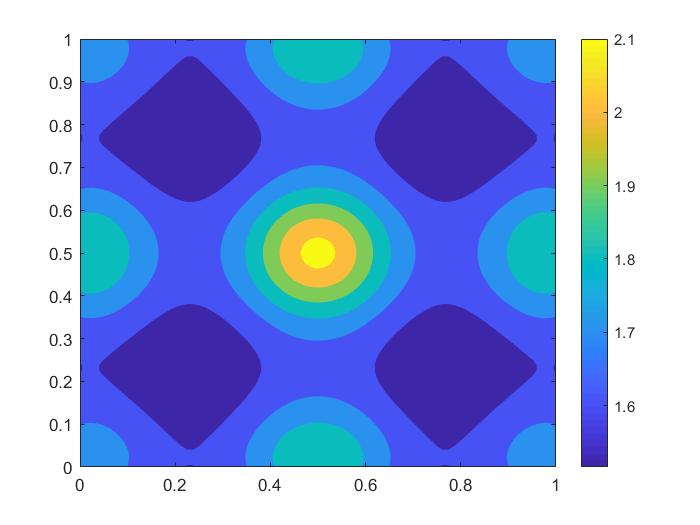}
		\caption{Connectivity $w(t,x,y)$ at $t=25$.}
		\label{wg10i1v}
	\end{subfigure}
\end{figure}

As in the previous example, if we increase the connectivity parameter to $\gamma=10$, the behavior of the activity $N$ and the amplitude $S$ is essentially the same, as we can see in figure \ref{g10i1v}. The connectivity kernel converge the pattern shown in figure \ref{wg10i1v} and it presents higher values than those in figure \ref{wg1i1v}.

\begin{figure}[ht!]
	\centering
	\caption{Case $\gamma=20$ and $I=5\sin^2(2\pi x)$.}
	\label{g20i5v}
	\begin{subfigure}{0.33\textwidth}
		\includegraphics[width=\textwidth]{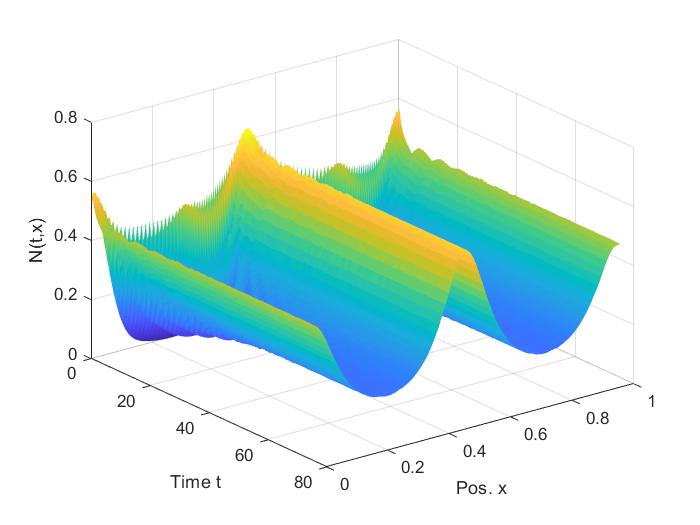}
		\caption{Activity $N(t,x)$.}
	\end{subfigure}  
	\begin{subfigure}{0.33\textwidth}
		\includegraphics[width=\textwidth]{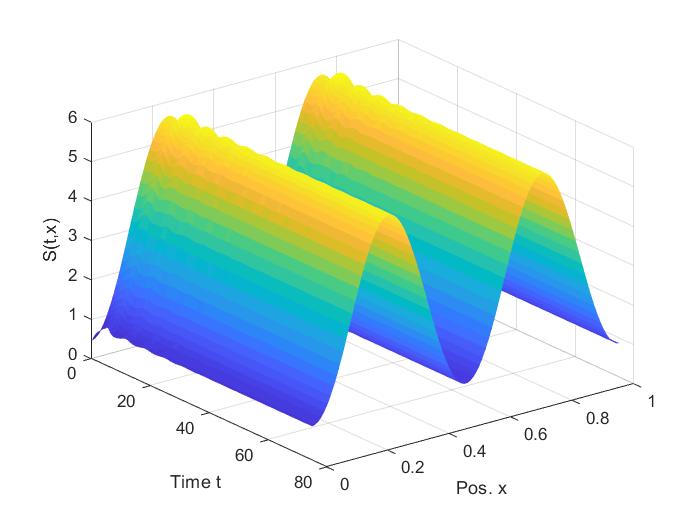}
		\caption{Amplitude of stimulation $S(t,x)$.}
	\end{subfigure}
	\begin{subfigure}{0.33\textwidth}
		\includegraphics[width=\textwidth]{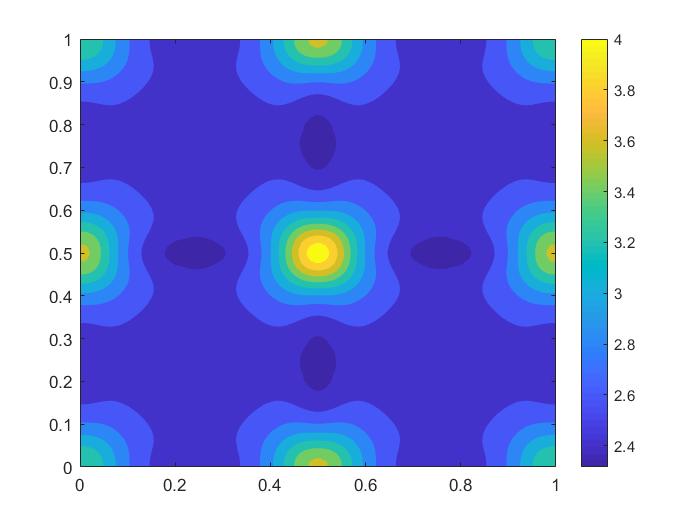}
		\caption{Connectivity $w(t,x,y)$ at $t=75$.}
		\label{wg20i5v}
	\end{subfigure}
\end{figure}

Finally, in the case of $\gamma=20$ and $I=5\sin^2(2\pi x)$, the numerical solution exhibits convergence towards an equilibrium when the time is large enough as it is presented in figure \ref{g20i5v}. The numerical connectivity kernel $w$ converge to pattern presented in figure \ref{wg20i5v}.

From these examples, for both spatially-homogeneous and inhomogeneous cases, we conjecture that if the system is inhibitory, then all solutions of system \eqref{eq0} converge to a steady-state. This result is also conjectured for the classical elapsed-time model studied in \cite{PPD}.

Moreover when the input $I$ is large enough, we expect a similar convergence result. Theorem \ref{largeinput} states that solutions converge pointwise to a solution of a simple linear problem when the external input is large enough in both spatially-homogeneous and inhomogeneous cases. This theorem could be a first approach to verify the general convergence result.

\subsection{Limit system with $\varepsilon=0$.}

We present some numerical simulations of the limit system \eqref{eqeps0} under the same setting of domain, firing rate and initial kernel. We show the homogeneous and inhomogeneous cases with the same respective initial densities, learning rules and parameter combinations of their counterparts of system \eqref{eq0}. We contrast the numerical simulations with the convergence theorem \ref{conveps} when $\varepsilon$ vanishes.

\subsubsection{Spatially-homogeneous input}
In figure \ref{Lg1i1c} we observe that for $\gamma=1$ and $I=1$ both $N,S$ converge fast in time to equilibrium and become spatially-homogeneous. Moreover the figure \ref{Lwg1i1c} shows that $\|w(t)-\langle w(t)\rangle\|_\infty$ is converging to $0$, so $w$ is converges to a constant. This corresponds essentially to the same behavior of the numerical simulations in system \eqref{eq0} and it is compatible with the convergence result of theorem \ref{conveps}.

\begin{figure}[ht!]
	\centering
	\caption{Case $\gamma=1$ and $I=1$ for the limit system.}
	\label{Lg1i1c}
	\begin{subfigure}{0.33\textwidth}
		\includegraphics[width=\textwidth]{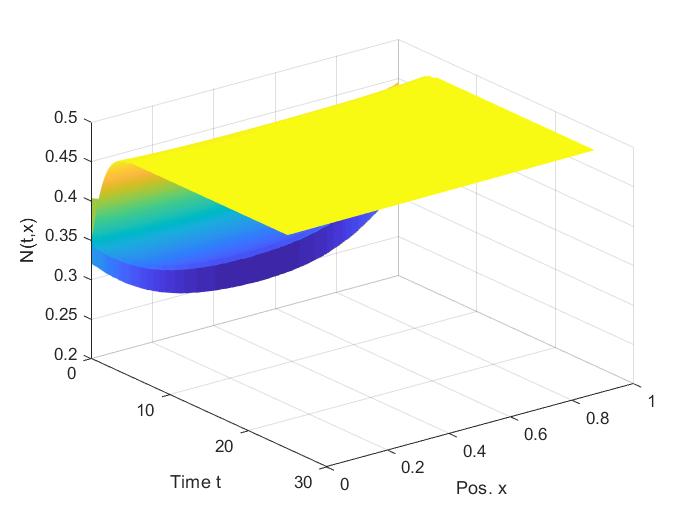}
		\caption{Activity $N(t,x)$.}
	\end{subfigure}  
	\begin{subfigure}{0.33\textwidth}
		\includegraphics[width=\textwidth]{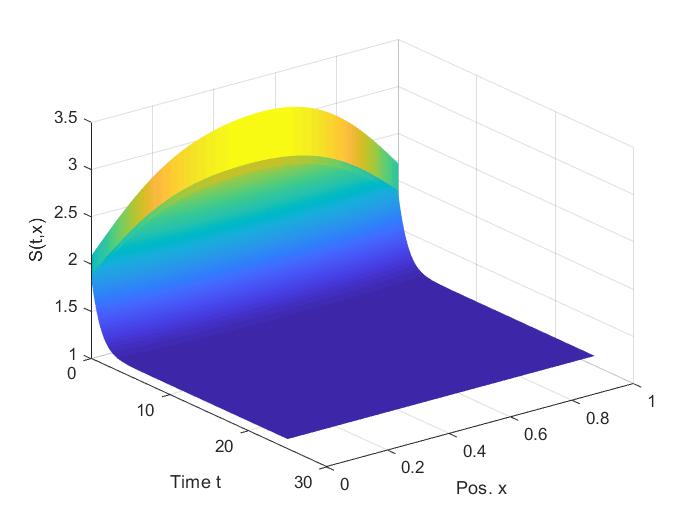}
		\caption{Amplitude of stimulation $S(t,x)$.}
	\end{subfigure}
	\begin{subfigure}{0.33\textwidth}
	\includegraphics[width=\textwidth]{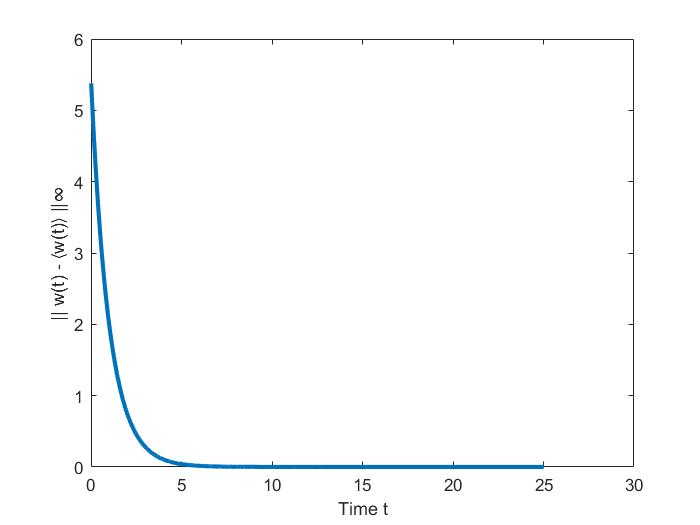}
	\caption{Variation of $\|w(t)-\langle w(t)\rangle\|_\infty$.}
	\label{Lwg1i1c}
\end{subfigure}
\end{figure}

When we increase the value to $\gamma=15$ numerical solutions keep the same behavior of convergence to equilibrium and spatial homogeneity as we see in figure \ref{Lg15i1c}. From figure \ref{Lwg15i1c} we observe that the numerical connectivity kernel verifies that $\|w(t)-\langle w(t)\rangle\|_\infty$ is converging to $0$ and $w$ converges to a constant.

\begin{figure}[ht!]
	\centering
	\caption{Case $\gamma=15$ and $I=1$ for the limit system.}
	\label{Lg15i1c}
	\begin{subfigure}{0.33\textwidth}
		\includegraphics[width=\textwidth]{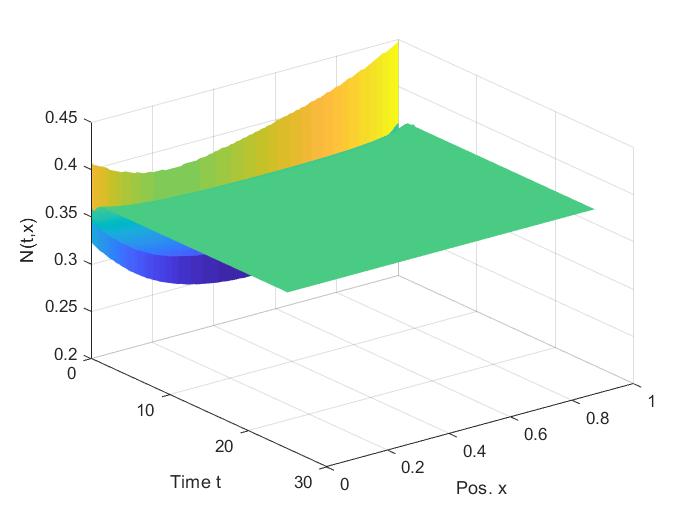}
		\caption{Activity $N(t,x)$.}
	\end{subfigure}  
	\begin{subfigure}{0.33\textwidth}
		\includegraphics[width=\textwidth]{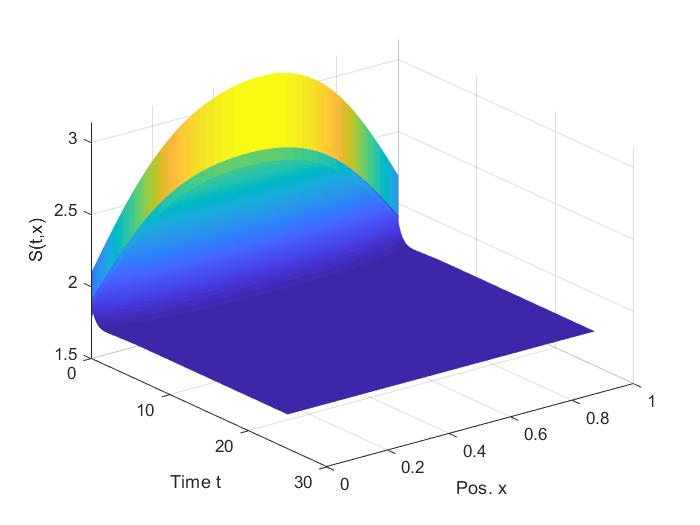}
		\caption{Amplitude of stimulation $S(t,x)$.}
	\end{subfigure}
	\begin{subfigure}{0.33\textwidth}
		\includegraphics[width=\linewidth]{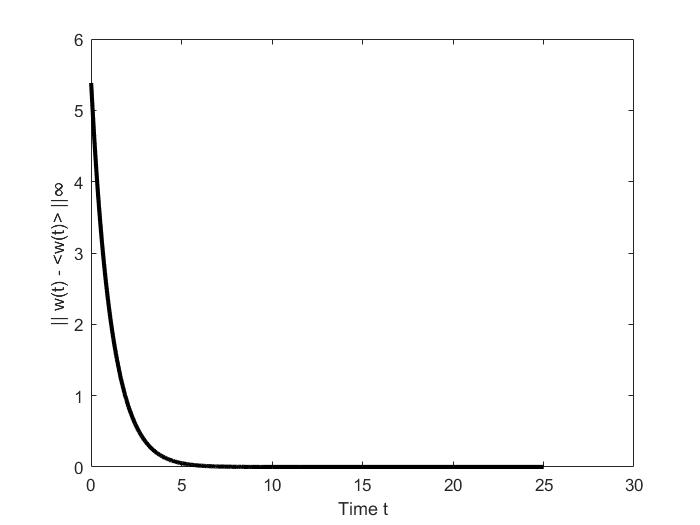}
		\caption{Variation of $\|w(t)-\langle w(t)\rangle\|_\infty$.}
		\label{Lwg15i1c}
	\end{subfigure}
\end{figure}

\begin{figure}[ht!]
	\centering
	\caption{Case $\gamma=35$ and $I=5$ for the limit system.}
	\label{Lg35i5c}
	\begin{subfigure}{0.33\textwidth}
		\includegraphics[width=\textwidth]{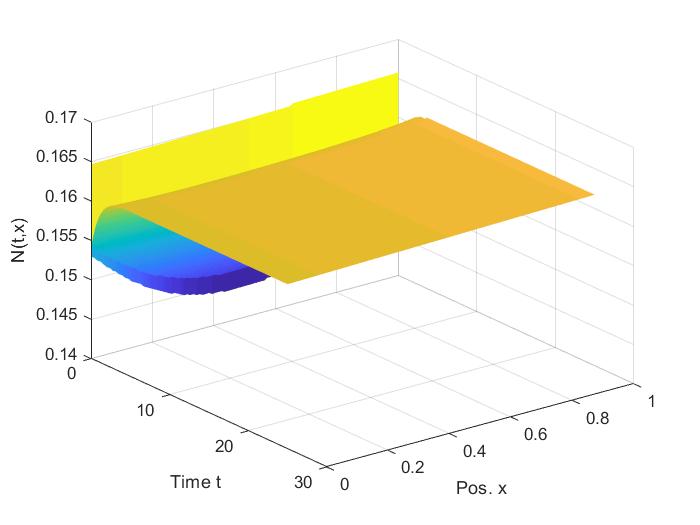}
		\caption{Activity $N(t,x)$.}
	\end{subfigure}  
	\begin{subfigure}{0.33\textwidth}
		\includegraphics[width=\textwidth]{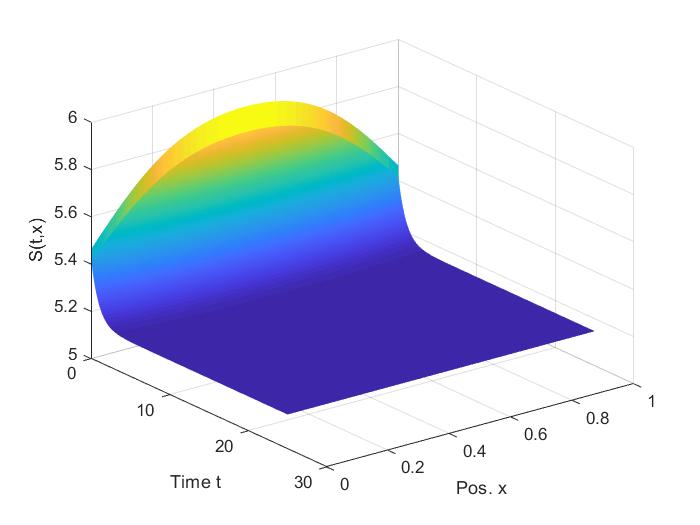}
		\caption{Amplitude of stimulation $S(t,x)$.}
	\end{subfigure}
	\begin{subfigure}{0.33\textwidth}
	\includegraphics[width=\linewidth]{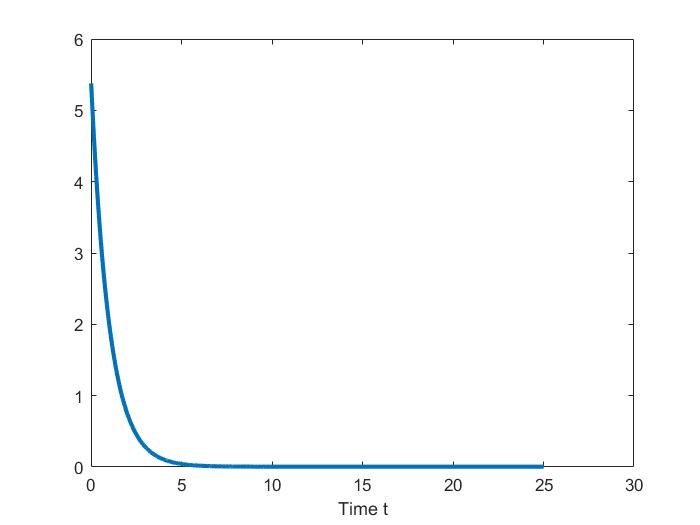}
	\caption{Variation of $\|w(t)-\langle w(t)\rangle\|_\infty$.}
	\label{Lwg35i5c}
\end{subfigure}
\end{figure}

If in addition we take $\gamma=35$ and increase the value of input to $I=5$, we observe in figure \ref{Lg35i5c} the same behavior for $N,S$ and $w$ as in previous cases. Therefore we can conjecture that when $g$ and the input $I$ are constant then the system \eqref{eqeps0} simply converges to a spatially-homogeneous equilibrium, like we observed in the corresponding numerical simulations of system \eqref{eq0}.

\subsubsection{Spatially-inhomogeneous input}

Now we show some numerical simulations of the system \eqref{eqeps0} under the same previously presented non-constant inputs.

If $I=\sin^2(2\pi x)$ and $\gamma=1$ we see in figure \ref{Lg1i1v} that both $N$ and $S$ converge in time to a stationary state as expected. With respect to the kernel $w$, we observe in figure \ref{Lwg1i1v} a similar pattern formation as in the corresponding simulation for the system \eqref{eq0} in figure \ref{wg1i1v}. Furthermore, this example is compatible with the result of theorem \ref{conveps}.

\begin{figure}[ht!]
	\centering
	\caption{Case $\gamma=1$ and $I=\sin^2(2\pi x)$ for the limit system.}
	\label{Lg1i1v}
	\begin{subfigure}{0.33\textwidth}
		\includegraphics[width=\textwidth]{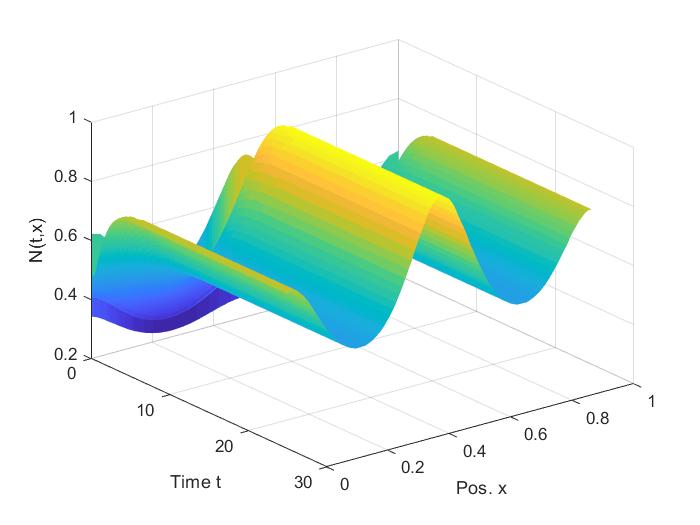}
		\caption{Activity $N(t,x)$.}
	\end{subfigure}  
	\begin{subfigure}{0.33\textwidth}
		\includegraphics[width=\textwidth]{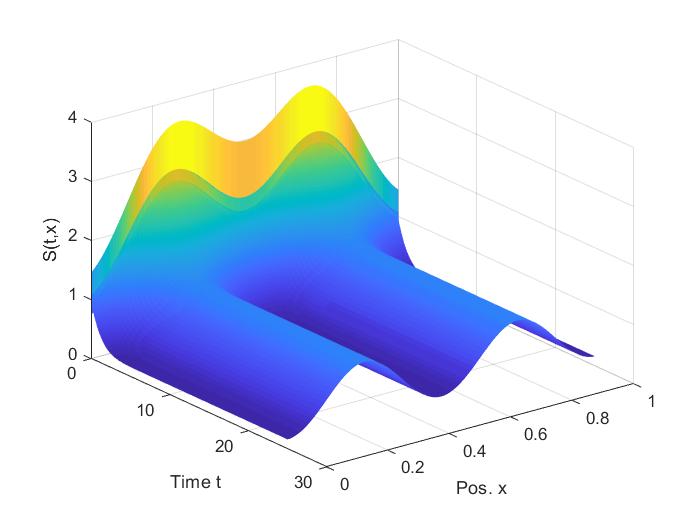}
		\caption{Amplitude of stimulation $S(t,x)$.}
	\end{subfigure}
	\begin{subfigure}{0.33\textwidth}
		\includegraphics[width=\textwidth]{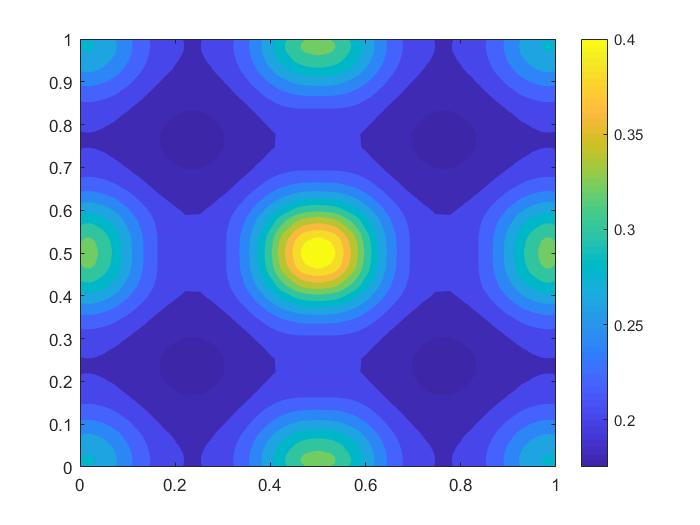}
		\caption{Connectivity $w(t,x,y)$ at $t=25$.}
		\label{Lwg1i1v}
	\end{subfigure}
\end{figure}

\begin{figure}[ht!]
	\centering
	\caption{Case $\gamma=10$ and $I=\sin^2(2\pi x)$ for the limit system.}
	\label{Lg10i1v}
	\begin{subfigure}{0.33\textwidth}
		\includegraphics[width=\textwidth]{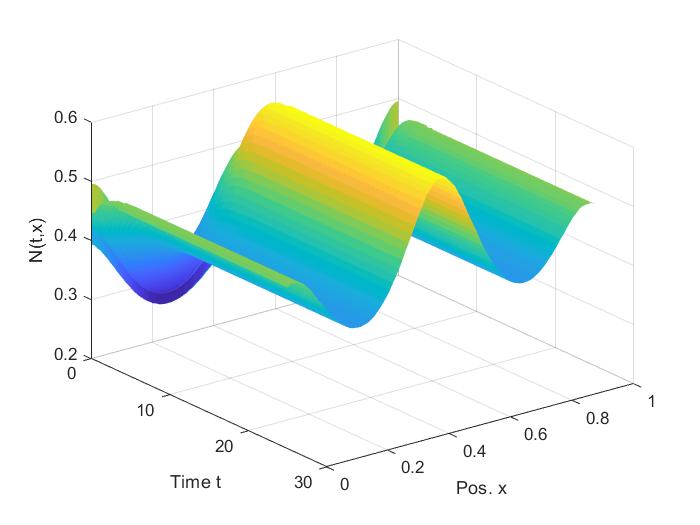}
		\caption{Activity $N(t,x)$.}
	\end{subfigure}  
	\begin{subfigure}{0.33\textwidth}
		\includegraphics[width=\textwidth]{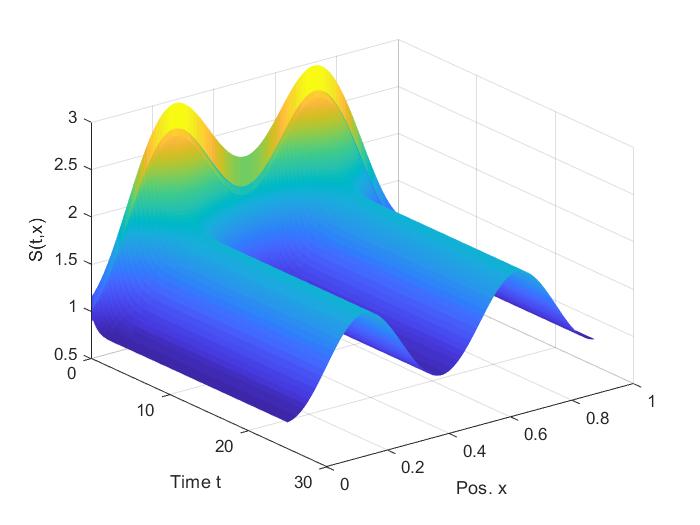}
		\caption{Amplitude of stimulation $S(t,x)$.}
	\end{subfigure}
	\begin{subfigure}{0.33\textwidth}
		\includegraphics[width=\textwidth]{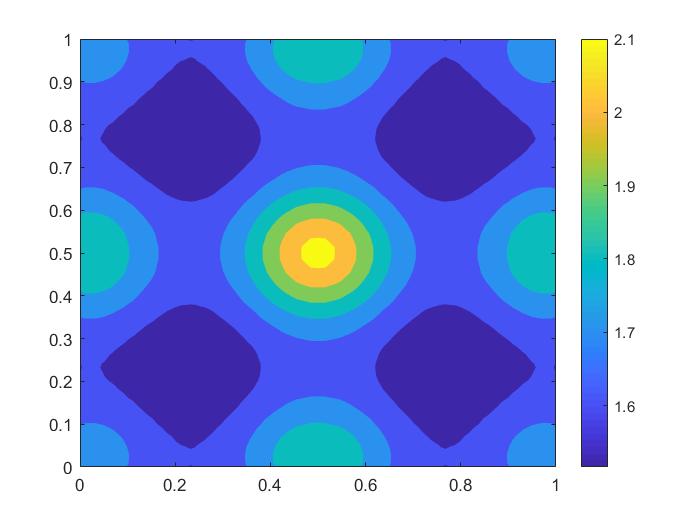}
		\caption{Connectivity $w(t,x,y)$ at $t=25$.}
		\label{Lwg10i1v}
	\end{subfigure}
\end{figure}

\begin{figure}[ht!]
	\centering
	\caption{Case $\gamma=20$ and $I=5\sin^2(2\pi x)$ for the limit system.}
	\label{Lg20i5v}
	\begin{subfigure}{0.33\textwidth}
		\includegraphics[width=\textwidth]{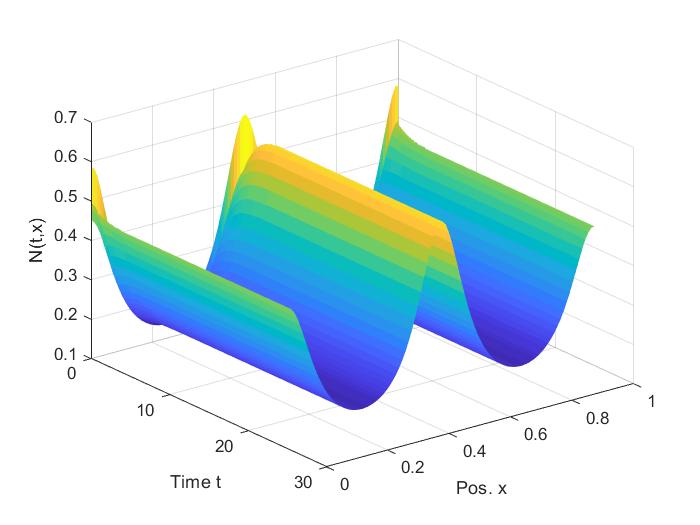}
		\caption{Activity $N(t,x)$.}
	\end{subfigure}  
	\begin{subfigure}{0.33\textwidth}
		\includegraphics[width=\textwidth]{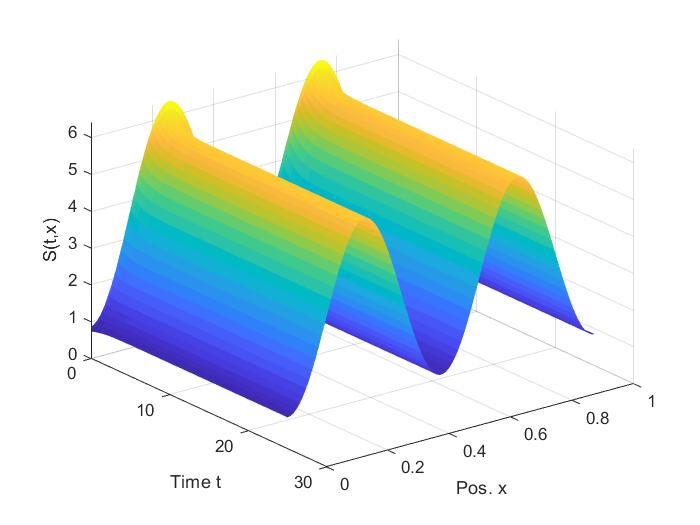}
		\caption{Amplitude of stimulation $S(t,x)$.}
	\end{subfigure}
	\begin{subfigure}{0.33\textwidth}
		\includegraphics[width=\textwidth]{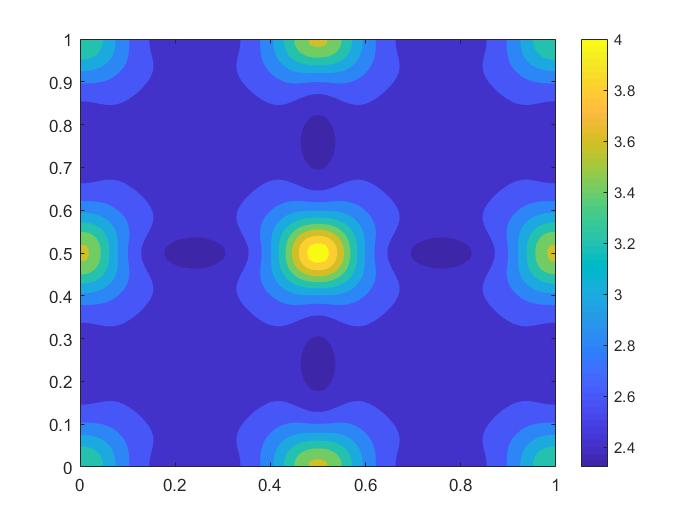}
		\caption{Connectivity $w(t,x,y)$ at $t=25$.}
		\label{Lwg20i5v}
	\end{subfigure}
\end{figure}

Next, when we increase the value to $\gamma=10$, we still observe in figure \ref{Lg10i1v} the convergence in time for $N$ and $S$. Furthermore, the numerical kernel $w$ exhibits in figure \ref{Lwg10i1v} a similar pattern to that observed in figure \ref{wg10i1v}, the corresponding simulation of system \eqref{eq0}.

Finally in the case of $\gamma=20$ and $I=5\sin^2(2\pi x)$, we observe in figure \ref{Lg20i5v} that the numerical solutions exhibits again a convergent behavior in the variables $N$ and $S$, while the kernel shows essentially in figure \ref{Lwg20i5v} the same pattern as the corresponding simulation of the system \eqref{eq0}. We conjecture that the general dynamic of the limit system \eqref{eqeps0} is simply a convergence to stationary state. Furthermore, we conjecture that theorem \ref{conveps} is also true for a strong interconnection in the inhibitory case or for a large external input.

\section{Perspectives}
From the previous theoretical results and numerical simulations we observe that only the case with very weak interconnection begins to be well understood for the Cauchy problem and the asymptotic behavior. More complex dynamics, such as oscillations, that could emerge with stronger interconnections or even convergence to a stationary state for a general case are far from being fully understood.

Concerning well-posedness in the system \eqref{eq0}, it remains unsolved studying the case of a strong connectivity and determine if multiple solutions arise. This means studying the number of solutions for $S(t,x)$ in the fixed point equation in \eqref{fixS}. It also remains open the well-posedness for limit system \eqref{eqeps0} with its corresponding fixed point problem.

Regarding convergence to equilibrium, it is necessary to give a more detailed description of how the size of the kernel $w$ affects the general behavior of system \eqref{eq0} in order to have a clearer idea of the bifurcation diagram in the connectivity parameter $\gamma$.

Furthermore, it is pending to study the convergence to equilibrium of system \eqref{eq0} for a general large input in order to improve theorem \ref{largeinput}. This include to consider the case when the external input $I$ goes to infinity in localized regions of $\Omega$. Moreover, it remains open to prove when the function $g$ and the external input are constant then the system approaches to spatially-homogeneous profile as it was observed in the numerical simulations. 

Finally for the system with slow learning \eqref{eqeps}, we expect the convergence theorem \ref{slowth} for weak interconnection is also true when $p$ satisfies the lower bound \eqref{lbp2}. Furthermore, for the limit system \eqref{eqeps0} we expect a simple convergence to equilibrium  regardless the value of $\gamma$.

\section*{Acknowledgements}
This project has received funding from the European Union's Horizon 2020 research and innovation program under the Marie Sklodowska-Curie grant agreement No 754362.

\begin{center}
    \includegraphics[scale=0.1]{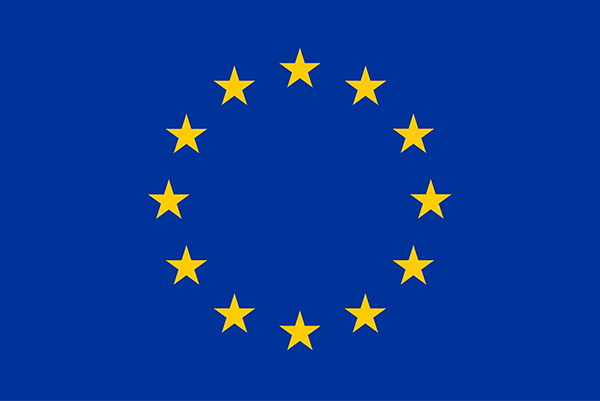}
\end{center}

\bibliography{thesis.bib}
\bibliographystyle{plain}
\nocite{*} 

\end{document}